\documentclass[a4paper,11pt,oneside]{memoir}
\setlrmarginsandblock{3cm}{*}{1}
\setulmarginsandblock{3cm}{*}{1}
\checkandfixthelayout[nearest]
\usepackage[latin1]{inputenc}   
\usepackage[T1]{fontenc}              
\usepackage{lmodern} 
\usepackage{amsmath,amssymb,amscd,bm,mathtools,xypic,extpfeil,graphicx}    
\usepackage{tikz,tikz-cd}
\usetikzlibrary{arrows.meta}
\usepackage{relsize}
\AtEndDocument{%
  \ifnum\value{lastsheet}=1\thispagestyle{empty}\fi}
\pagestyle{plain}
\setcounter{secnumdepth}{2}

\bibliographystyle{plain}

\usepackage{amsthm}
\newtheorem{thm}{Theorem}
\theoremstyle{definition}
\newtheorem{defn}[thm]{Definition}
\theoremstyle{example}

\theoremstyle{remark}
\newtheorem{remark}[thm]{Remark}
\theoremstyle{lemma}
\newtheorem{lemma}[thm]{Lemma}
\theoremstyle{corollary}
\newtheorem{corollary}[thm]{Corollary}

\newcommand {\NN}{{\mathbf N}}
\newcommand {\QQ}{{\mathbf Q}}

\newcommand {\RR}{{\mathbf R}}
\newcommand {\ZZ}{{\mathbf Z}}

\newcommand{\val}[2]{{\mathrm{val}_#1(#2)}}
\newcommand{\vk  }{{\mathbf{k}}}
\newcommand{\lvk}{{p^{\mathfrak k}}}

\newcommand{\HB}{{\mathcal{H}}}
\newcommand{\FS}{{\mathcal{S}}}

\newcommand{\Min}[1]{{\mathcal{H}^{min}_{#1}}}
\newcommand{\Max}[1]{{\mathcal{H}^{max}_{#1}}}

\newcommand{\HBE}[1]{{\mathcal{H}^E_#1}}

\newcommand{\FK}{{\mathbb{F}}}
   
\newcommand{\red}[1]{{\mathrm{Red}_{#1}}}
 
\newcommand{\CH} {{\mathcal{CR}}}

\newcommand{\im}{{\mathrm{Im}}}

\newcommand{\la}[2]{{\mathfrak #1}_#2}
\newcommand{\lao}[1]{{\mathfrak #1}}

\newcommand{\fund}[1]{{\{#1\}}}
\newcommand{\funddivZ}[1]{{\mathcal Div}{\{#1\}}}

\newcommand{\fundbd}[1]{{d^0\{#1\}}}

\author{Marcel B\"o{}kstedt}
\begin{document}

\title{Configuration graph cohomology}
\maketitle
\section{Introduction}
 The motivation for this paper comes from the study of the fundamental group of a type of configuration spaces, see \cite{Link}.  
The configuration spaces we consider depend on parameters. The most important parameter consists
of a graph $\Gamma$. An additional parameter consists of that to each vertex $v\in V(\Gamma)$ we  
associate a natural number $k_v$, that is a family of numbers $\vk=\{k_v\}_v$. The pair $(\Gamma,\vk)$ determines a configuration space of points on a manifold $M$. A point in the configuration space consists of the following data. For each vertex $v$ in the graph $\Gamma$ there is a point $Z_v$ in the symmetric power $S^{k_v}M$. These elements satisfy the condition that if $v$ and $w$ are connected by an edge in $\Gamma$, then $Z_v$ and $Z_w$ correspond to disjoint subsets of $M$. If $k_v=1$ for all $v$, the configuration space is a configuration space of ordered points in $M$, one point for each vertex of $\Gamma$.

Examples of the configuration spaces studied in \cite{Link} originally arose in the study of moduli spaces of stable configurations of minimal energy described by certain vortex equations.  They play a role in two-dimensional QFTs arising from a quantization of a supersymmetric extension of gauged sigma models(ibid) with toric targets. The configuration spaces considered in this paper most closely related to this situation are configuration spaces of the above type for $M$ a surface, and $\Gamma$ a class of graphs derived from the structure of toric varieties.  

In spite of this, the gentle reader should be warned that the present paper is  
purely algebraic in nature. 
 
The
fundamental group of such configuration space for $M$ an oriented, compact surface was studied in \cite{Link}.
In this paper we determined the fundamental group of this class of configuration spaces of points. There were no restrictions on the graph $\Gamma$, or on the weights $k_v$.

The structure of the fundamental groups depends on $M$, but also 
on a certain finitely generated Abelian group. In the special case $M=S^2$, the
fundamental group of the configuration space equals this group.
The group is considered under the name $E(\Gamma,\vk)$ in \cite{Link} and given by generators and relations    
there. The actual computation of the group is a simple exercise in solving
linear Diophantine equations. Given $(\Gamma,\vk)$ it is a trivial task for a computer to write down the
elementary divisors of the group. On the other hand, it is not so easy to describe how this group varies as we vary the weights or even the graph. The dependence on the parameters is the subject of this paper.

There is something about this situation that is unusual in algebraic topology. Sometimes, you study either a very big
class of spaces like ``all manifold'' or ``the algebraic $K$-theory of arbitrary rings $R$'' or ``classifying spaces of finite  groups''  
where you can only  make general statements about the structure of  
various algebraic invariants. On the other hand, you often study a small, comparatively regular and well behaved  
family of spaces like the Grassmannians, the surfaces, or
perhaps the classifying spaces of simple Lie groups. 

If you are in this "regular" situation, you can often collect the spaces you have into a filtering limit system.
In favorable cases this system  satisfies stabilization properties, and we are led to study the 
colimit of the system. This can be the source of much fun. 

The limit spaces we obtain are space
like $QS^0$ or  $BU$ or $K(\ZZ)$. In these cases, you can hope that you sooner or later can get precise numerical answers to questions
like ``what is the fundamental group'' or ``what is the homology''. 

The class of configuration spaces that we are interested in fall between
these two situations. There are many of them, but not overwhelmingly so. They follow regularities as you vary the parameters, but they are not so regular that they are boring. 
For each individual set of parameter values it is easy to find the answer, either by using pencil and paper or by using a machine, because
it's just linear algebra.

You can vary the data on two different levels.
The first question you can ask is ab out the family of spaces obtain by varying the
weights $\vk$ while keeping the graph    $\Gamma$ fixed.
We usually do not have canonical maps between the spaces in the family, but we can still
ask about what happens when you let parameters grow towards infinity. This is vaguely 
similar to situations common in analytic number theory or in statistical mechanics, 
but it seems to be an unusual point of view in algebraic topology. However, the recent preprint
\cite{FWW} is inspired by similar ideas.

 In the first part of  this paper we give a more structural
understanding of the how the fundamental group varies while we do not change $\Gamma$. We show that this group is closely related to the partially ordered set of
bipartite subgraphs of $\Gamma$. In doing so, we find that it is convenient to reinterpret $E(\Delta,\vk)$ as the first cohomology group of a cochain complex. That is, we are defining a cohomology theory for vertex weighted graphs. 

In the past, there has been various definitions of a homology theory of graphs. For instance,  the definition of graph cohomology in \cite{BS} (see also \cite{EH}) is clearly related to ours. They consider configuration spaces that are important special cases of the configuration spaces that motivated this study.
They are interested in the homology of these configuration spaces, 
while the algebraic questions we deal with in this paper are motivated by a study of the fundamental groups. 

There is an additional difference between our approach and the situation studied in \cite{BS}. We are interested in configuration spaces with multiple points of the same color, which for the cohomology groups corresponds to allowing vertex weights to differ from 1. 

Eventually one might want to study the cohomology of the configuration spaces we consider using a generalization of the methods of \cite{BS}, but we will not discuss this question in the present paper.   

Another question one might ask is for the cohomology of the universal
cover or the maximal Abelian cover of the configuration space. The very special case of this where you have only one color is treated in 
\cite{Curvature}. The case of 2 colors is discussed in \cite{RW}. For a similar question, see also \cite{Pochhammer}.

There is also the famous graph cohomology of Kontsevich (\cite{Kontsevich}, see also \cite{CV} and\cite{Igusa}). This theory takes coefficients in cyclic operads, and there does not seem to be an obvious direct relation between that theory and the theory considered in this paper. However, there does seem to be a relation between this theory and the graph cohomology of \cite{BS}. We will discuss this connection further in \cite{BM}.
 
Another homology theory of graphs is discussed in \cite{GLMY}. This homology is somewhat similar to the theory in \cite{BS} in that it uses oriented edges, does not consider vertex weights and has higher homology. The basic chains of the theory are ``regular paths'' in the graph. Such paths do not seem to play an important role in our theory, so probably there is no strong link to our theory.

The ``GKM'' graph cohomology defined in  \cite{GKM} has roots in the cohomology of a toric variety and generalizes this, just like our cohomology theory. The coefficients is a local coefficient system with coefficients in real vector bundles on the graph. There are higher dimensional cohomology groups, defined in a  way similar of the theory in \cite{BS}. This theory has been extensively studied, mainly for its applications in computing equivariant cohomology and $K$-theory of spaces with an action of a torus.  

Then there are cohomology theories of Khovanov type (\cite{KH},\cite{Viro}). According to the authors, this was one of the inspirations for \cite{BS}. Following \cite{HR}, there are two different but equivalent complexes that defines this theory. The reformulation by Viro using the ``enhanced state complex'' is similar to our definition of graph cohomology, while Khovanov's original definition seems analogous to our ``fundamental complex''. At present, this is only a loose analogy. 

It seems to be a difficult question to give a 
complete description of how the group $E(\Gamma,\vk)$ varies as we vary the weights $\vk$. We did try the computer, and are happy to acknowledge the use of the computer system ``sage''. Letting her examine thousands of examples bolstered our confidence in the theorems we prove in this paper, but it didn't lead to a precise conjecture on how the structure of the group $E(\Gamma,\vk)$ depends on   
the parameter $\vk$.    

In the last part of the paper, we try to get at least some results about the order of the torsion group group $T(\Gamma,\vk)\subset E(\Gamma,\vk)$.
For a given $\Gamma$ one can sometimes understand completely how this varies with $\vk$. We give examples of this, and
prove a general structure theorem for the function $\vk \mapsto \val p {\vert T(\Gamma,\vk)\vert}$. This is expressed as
a rational function in the max-plus ring on the variables $\val p{k_v}$. This means that it is related to tropical algebra. 
We don't know if this connection will lead anywhere.

The final question to consider is how the tropical rational function which gives the order of $T(\Gamma,\vk)$
depends on $\Gamma$. This is the highest level of parametrization, and to be honest, we are not able to say much about it.

From a technical point of view this paper is about some elementary questions in  linear algebra.
It is essentially self contained. We now describe the technical set-up and the basic definitions.

Let $\Gamma$ be a graph without  loops or multiple edges.
Let $V(\Gamma)$ be the set of vertices of $\Gamma$, and $E(\Gamma)$ the set of edges.

Let $(\Gamma,\vk)$ be a negative color scheme. That is, for every vertex 
$v\in V$ we have fixed a (positive) natural number $k_v$.
Let $\vk=\{k_v\}_v$, that is, $\vk$ is a vector of weights on the vertices of $\Gamma$.

A subgraph of the graph $\Gamma$ is given by subsets $V'\subset V(\Gamma)$ and
$E'\subset E(\Gamma)$ such that if $e\in E'$, the two endpoints of $e$ are contained in
$V'$. If $\vk$ is a negative color scheme on $\Gamma$ and $i:\Delta\subset \Gamma$ is a subgraph,
there is an induced negative color scheme $i^*(\vk)$. If no confusion is likely, we will write $(\Delta,i^*(\vk))$ as $(\Delta,\vk)$. 

We define the graph cochain complex to be the complex $C^*(\Gamma,\vk)$ whose only non-trivial groups are $
C^0=\ZZ[V]$ and $C^1=\ZZ[E]$. The only non-trivial differential is 
$d^0(v)=\sum_e k_{w}e$ 
where the sum is taken over all edges $e$  of $\Gamma$ which are incident to $v$, and where $w$ is the other endpoint of $e$.
We denote the unique edge in $\Gamma$ connecting the vertices $v,w$ by $e(v,w)$, and the two vertices incident
to an edge $e$ by $v(e)$ and $w(e)$. The edges are not assumed to be oriented, 
so there is a choice inherent in this notation. 
Whenever we  use this notation either the choice is irrelevant, or we explicitly specify it. 
In this notation, we define the differential of the chain complex as
\[
d^0(v)=\sum_wk_we(v,w)
\]
where the sum is understood to be taken over all vertices $w$ such that there is an edge $e(v,w)$ connecting $v$ and $w$. According to the description of the group $E(\Gamma,\vk)$ in proposition 30 of \cite{Link}, the group $E(\Gamma,\vk)$ discussed above agrees with $H^1(\Gamma,\vk)$.

The graph cohomology $H^*(\Gamma,\vk)$ considered in this paper is the cohomology of this chain complex.
It can only be non-trivial in the degrees 0 and 1. 
If $i:\Delta\subset \Gamma$ is the inclusion of a subgraph, there is an induced negative color scheme
$(\Delta,\vk)$, and a surjective restriction map $i^*:C^*(\Gamma,\vk)\to C^*(\Delta,\vk)$.
We define the relative graph cohomology $H^*(\Gamma,\Delta,\vk)$ to be the cohomology
of the kernel of the restriction. 
 
There is some room for generalizations. For instance, we could drop the condition that there are no self loops, and we could define the induced negative color scheme for a class of maps of graphs which is more general than the class of injective maps. This seems to be irrelevant for the applications to the topology of configuration spaces, and we will not pursue it here. 

The purpose of this paper is to study how the graph cohomology varies while we keep $\Gamma$
fixed and vary $\vk$. The cohomology in degree 0 is obviously a finitely generated free group. 
In degree 1, the cohomology is still finitely generated, but not necessarily free.
The rank of $H^*(\Gamma,\vk)$ will not depend on $\vk$  (corollary~\ref{rank1}). 
Therefore we focus on the torsion subgroup of $T(\Gamma,\vk)\subset H^1(\Gamma,\vk)$. 
Our point of view is that $\Gamma$ determines a function $T$ on the set of maps $\vk:V(\Gamma)\to \NN$,
namely the function that takes $\vk$ to the torsion subgroup of $H^1(\Gamma,\vk)$.
We consider this function as an invariant of $\Gamma$, which we intend to study.

 We will use the following notation: 
For any subset $A\subset \{0,1,\dots,r\}$
let us define $GCD_A(\vk)$ to be the greatest common divisor of the numbers $\{k_i\}$ for
$i\in A$.  
Eventually we express  $T(\vk)$ as a function of the numbers $GCD_A(\vk)$. The function depends on the structure
of the graph $\Gamma$.   

We give an overview of what is contained in the paper.

In section~\ref{sec:fixed} we take a first shot at giving a more structural description of the torsion group $T(\Gamma,\vk)$ for a fixed graph and a fixed weighing $\vk$. The main subject is a discussion of a concept of orientation. This concept is motivated by an analogy to the cohomology of manifolds. A difference to the manifold situation is that it turns out to be a subtle question to study this orientation at the prime two.

The main results of the section are conditions for orientability of graphs at odd primes in lemma~\ref{lemma.pn.orientation}, and at the prime 2 in lemma~\ref{lemma.two.orientation}. More precisely, these lemmas deal with $\ZZ/p^r$--orientations  of graphs satisfying an additional restriction, namely the condition that there are no edges such that the product of the edge weights of its incident vertices is divisible by $p^r$. 

In section~\ref{sec:not.reduced} we continue the study of the cohomology for a fixed graph and a fixed weight. In topology one can attempt to represent homology classes by bordism classes $f:M\to X$, that is, asking if every homology class can be written as a sum of classes $f_*([M])$ where $M$ is the fundamental class of $M$. This is a classical and difficult subject. In general, the answer depends on which cohomology theory and which bordism theory we consider. We do something similar  with graph cohomology. We try to represent cohomology classes as images of fundamental classes of orientable subgraphs.

Along these lines we prove theorem~\ref{generation}. The proof of this theorem is regrettably technical. We try to go through the argument slowly, and attempt to cut the it up into individually edible pieces.

Since we now know that we can represent cohomology classes by inclusions of oriented subgraphs, the obvious next project is to describe the cohomology in terms of the category of oriented subgraphs. It turns out that it is convenient to fix a prime $p$ at this point, and for this prime discuss the $p$--primary torsion subgroup of $H^1(\Gamma,\vk)$.

In section~\ref{sec:forest} we organize the fundamental classes into a graph, which we call the fundamental forest.
The fundamental forest depends on the prime $p$ and also on the 
$p$--valuation of the weights $\vk$.
If we are given the fundamental forest, we are able to reconstruct the chain complex defining our graph cohomology up to quasi equivalence. We don't try to get the optimal result in this direction, but we do get a description of the critical torsion group of the graph cohomology in terms of the fundamental forest in lemma~\ref{main.theorem}. This is the high point in our study of the graph cohomology for fixed graph and fixed weights.

One can think of this result as a function $F_p$
which can be described in graph theoretical terms.
The function $F_p$ orders to a family of non negative integers
$\{a_v\}_{v\in V(\Gamma)}$ a finite set of exponents
\[
F_p(\{a_v\}_{v\in v(\Gamma)})=\{b_i\}
\]
such that if $\{b_i\}=F_p(\{\val p k_v\})$, then  the
$p$-torsion subgroup of $H^1(\Gamma,\vk)$ is isomorphic to $\oplus_i \ZZ/p^{b_i}$. 
However, there are only two cases.
If $p,q$ are odd primes, $F_p=F_q$. On the other hand, the
function $F_2$ can be different from $F_p$ for odd $p$.  

In section~\ref{sec:varying}, we start discussing what happens when we fix the graph, but vary the weights. There are some easy cases that can be understood completely. In particular, if the graph $\Gamma$ is a tree, it is not so hard to compute the order of the torsion. In the general case, we cannot give closed formulas for the torsion, but restrict ourselves to trying to determine the order of the $p$-torsion group. In theorem~\ref{order.oriented.graph}, we use the theory we have developed in the preceding sections to give an algorithm for computing the order of the torsion. 

In section~\ref{sec:tropical-cohomology} we continue the discussion of how the order of the torsion changes when we keep the graph fixed, and vary the weights. For (our) convenience, we now restrict ourselves to the odd torsion. In theorem~\ref{tropical.interpretation} we interpret the order of the torsion in terms of tropical rational functions in the weights. That is, to a graph we give a tropical rational function in the vertex weights which computes the order of the $p$-torsion for us.

We finally specialize the preceding theory to the case of a complete graph. In this case, it is possible to write down an easy formula for the tropical function in terms of elementary tropical symmetric functions. This leads to the final question: How does the tropical function vary when we vary the graph? At present, we cannot provide a good answer to this question.

I'm happy to acknowledge that during this work I have benefited a lot from many discussion with my collaborator N. Rom\~ao. 

\section{Computations for fixed weights $\vk$}
\label{sec:fixed}

\subsection{The splitting into $p$-torsion parts. The fundamental chain}
The group we are mainly interested in is the torsion subgroup $T(\Gamma,\vk)$ of the first cohomology group $H^1(\Gamma,\vk)$. In this paragraph, we will discuss how to interpret elements of this group in terms of subgraphs of $\Gamma$. The first step is to 
discuss $p$-torsion for each prime $p$ separately. In order to do this, we 
consider cohomology groups with coefficients. This can simplify the situation
because of the following lemma.
\begin{lemma}
\label{invertible}
  Let $\vk,\vk'$ define two negative color schemes on the same graph $\Gamma$. Let $R$ be a ring. 
Suppose that there are invertible elements $x_v\in R$ such that
for each $v$ we have that $x_vk_v=k_v'\in R$. Then, $H^*(\Gamma,\vk;R)\cong H^*(\Gamma,\vk';R)$.
\begin{proof}
   Consider the map
$F^*:C^*(\Gamma,\vk;R)\to C^*(\Gamma,\vk';R)$ defined by
$F^0(v)=x_vv$ for $v\in V(\Gamma)$ and $F^1(e)=x_{v(e)}x_{w(e)}e$ for $e\in E(\Gamma)$. This is a chain map, since
\[
F^1d^0(v)=F^1(\sum_w k_w e(v,w))=\sum_wx_vk_wx_we(v,w)=\sum_w x_v k_w' e(v,w) =
d^{\prime 0}(x_v v) = d^{\prime 0}F^0(v) 
\]
It follows that $F^*$ is an isomorphism of chain complexes, so it induces an isomorphism of
cohomology groups.   
\end{proof}
\end{lemma}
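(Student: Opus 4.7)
The plan is to exhibit an explicit isomorphism of cochain complexes $F^*: C^*(\Gamma,\vk;R) \to C^*(\Gamma,\vk';R)$ by rescaling the generators using the given invertible elements $x_v$. Since cohomology is functorial under chain isomorphisms, this will suffice.

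For the construction, I would first set $F^0(v) = x_v\, v$ on the vertex basis of $C^0$. To decide the right rescaling on edges, compute $d'^0 F^0(v) = x_v \sum_w k'_w\, e(v,w) = \sum_w x_v x_w k_w\, e(v,w)$, and compare with $F^1 d^0(v) = \sum_w k_w\, F^1(e(v,w))$. This forces the definition $F^1(e(v,w)) = x_{v(e)} x_{w(e)}\, e(v,w)$, which is well-defined because the product $x_{v(e)} x_{w(e)}$ does not depend on the choice of which endpoint is labelled $v(e)$. With these choices, verifying $F^1 d^0 = d'^0 F^0$ is the short direct computation already written out.

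Since each $x_v$ is a unit in $R$, the maps $F^0$ and $F^1$ are diagonal with invertible entries on the bases $V(\Gamma)$ and $E(\Gamma)$, hence isomorphisms of $R$-modules. Combined with the chain map identity, this means $F^*$ is an isomorphism of cochain complexes, and therefore induces isomorphisms on cohomology in every degree. There is no real obstacle here; the only mildly delicate point is noticing that the correct edge scaling is the product of the two endpoint scalars, but this is dictated by the form of the differential $d^0(v) = \sum_w k_w e(v,w)$ and the requirement that $F$ commute with it.
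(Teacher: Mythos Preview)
Your proof is correct and follows exactly the same approach as the paper: both define $F^0(v)=x_v v$, $F^1(e)=x_{v(e)}x_{w(e)}e$, verify the chain map identity by the same one-line computation, and conclude by invertibility of the diagonal scalars. If anything, you add a bit more detail by explicitly noting why the edge scaling is well-defined and why the maps are module isomorphisms.
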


The group $H^*(\Gamma,\vk)$ is a finitely generated Abelian group. It has a torsion subgroup, and 
a complementary free Abelian group. The free part is determined up to isomorphism by its rank. The rank is much easier to understand than the torsion part. In particular, it is insensitive to the weights $\vk$, as the following application of lemma~\ref{invertible} shows.

\begin{corollary}
\label{rank1}
The rank of $H^*(\Gamma,\vk)$ is independent of $\vk$.
\begin{proof}
The rank equals the dimension of the $\QQ$-vector space  $H^*(\Gamma,\vk;\QQ)$. If $\vk'$ is a different choice of weights,
we have that $k_v' = (k_v'/k_v)k_v$. Since $k_v'/k_v$ is invertible in  $\QQ$ the corollary follows from
lemma~\ref{invertible}.
\end{proof}
\end{corollary}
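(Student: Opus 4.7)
The plan is to reduce the statement to Lemma~\ref{invertible} by passing to rational coefficients. First I would note the standard fact that for a finitely generated abelian group $A$, the rank equals $\dim_{\QQ}(A \otimes_{\ZZ} \QQ)$. Because the functor $- \otimes_{\ZZ} \QQ$ is exact on the category of abelian groups, it commutes with taking cohomology of the (finitely generated, concentrated in degrees 0 and 1) cochain complex $C^*(\Gamma,\vk)$. Hence
\[
\mathrm{rank}\, H^*(\Gamma,\vk) = \dim_{\QQ} H^*(\Gamma,\vk;\QQ).
\]

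Next, I would observe that the hypothesis of Lemma~\ref{invertible} is easily met over $R = \QQ$. Given two weight schemes $\vk$ and $\vk'$, the ratios $x_v := k_v'/k_v$ lie in $\QQ^{\times}$ (since all $k_v, k_v'$ are positive integers), and by construction $x_v k_v = k_v'$ in $\QQ$. Applying Lemma~\ref{invertible} with these units $x_v$ yields an isomorphism $H^*(\Gamma,\vk;\QQ) \cong H^*(\Gamma,\vk';\QQ)$, and hence equality of the $\QQ$-dimensions.

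Combining the two steps gives $\mathrm{rank}\, H^*(\Gamma,\vk) = \mathrm{rank}\, H^*(\Gamma,\vk')$, which is what is claimed. There is no real obstacle here: the only thing to be careful about is invoking the exactness of $- \otimes_{\ZZ} \QQ$ (i.e.\ the flatness of $\QQ$ over $\ZZ$) to commute tensoring with $\QQ$ past the cohomology of the cochain complex, so that one may legitimately identify $H^*(\Gamma,\vk) \otimes \QQ$ with $H^*(\Gamma,\vk;\QQ)$ before applying Lemma~\ref{invertible}.
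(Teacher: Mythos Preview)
Your proof is correct and follows essentially the same approach as the paper: both reduce to Lemma~\ref{invertible} over $\QQ$ using the units $x_v = k_v'/k_v$. The only difference is that you spell out the justification (via flatness of $\QQ$) for the identification $\mathrm{rank}\,H^*(\Gamma,\vk) = \dim_\QQ H^*(\Gamma,\vk;\QQ)$, which the paper simply asserts.
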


We now consider the $p$--primary torsion part of
$H^1(\Gamma,\vk)$ for a given prime $p$. The following application of lemma~\ref{invertible} 
helps to compute this.
Let $\vk_p$ denote the $p$-part of $\vk$, that is $\vk_p=\{(k_p)_v\}_v$ where $(k_p)_v=p^{\val p {k_v}}$. We can use $\vk_p$ as
a weighing of the graph $\Gamma$, and compute the corresponding torsion group $T(\Gamma,\vk_p)$
\begin{lemma}
  $T(\Gamma,\vk)=\prod_p T(\Gamma,\vk_p)$
  \begin{proof}
    Let $\ZZ_p$ be the localization of $\ZZ$ at $p$. 
The torsion of $H^*(\Gamma,\vk;\ZZ_p)$ equals the $p$-primary
part of the torsion of $H^*(\Gamma,\vk)$, so we only have to show that
the torsion of $H^*(\Gamma,\vk;\ZZ_p)$ equals the
torsion of $H^*(\Gamma,\vk_p;\ZZ_p)$. In the ring $\ZZ_p$, we can write
$k_v=(\mathrm{unit}) (k_v)_p$, so that
this follows from
lemma~\ref{invertible}.
\end{proof}

\end{lemma}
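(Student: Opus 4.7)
The plan is to reduce to the prime $p$ one prime at a time, exactly as the author hints.

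First I would invoke the standard structure theorem for finitely generated abelian groups: the torsion subgroup $T$ of any such group splits canonically as a direct product $\prod_p T_{(p)}$ of its $p$-primary components, and each $T_{(p)}$ is obtained by localizing $T$ (equivalently, the whole group) at the prime $p$ and taking torsion. So it suffices, for each fixed prime $p$, to identify the $p$-primary torsion of $H^1(\Gamma,\vk)$ with $T(\Gamma,\vk_p)$.

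Next I would work with coefficients in $\ZZ_{(p)}$, the localization of $\ZZ$ away from primes $\ell \neq p$. Since $\ZZ_{(p)}$ is flat over $\ZZ$, we have $H^*(\Gamma,\vk;\ZZ_{(p)}) \cong H^*(\Gamma,\vk) \otimes \ZZ_{(p)}$, and the torsion of this module is exactly the $p$-primary torsion of $H^*(\Gamma,\vk)$. By the same reasoning applied with the weights $\vk_p$, the torsion of $H^*(\Gamma,\vk_p;\ZZ_{(p)})$ is the $p$-primary torsion of $H^*(\Gamma,\vk_p)$; but the prime-to-$p$ part of $(\vk_p)_v$ is trivial, so this $p$-primary torsion coincides with the full torsion $T(\Gamma,\vk_p)$.

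Finally I would apply Lemma \ref{invertible} with $R = \ZZ_{(p)}$. For each vertex $v$ write $k_v = u_v \cdot (k_v)_p$ where $u_v = k_v / p^{\val{p}{k_v}}$ is prime to $p$, hence a unit in $\ZZ_{(p)}$. Setting $x_v = u_v$ gives $x_v \cdot (k_v)_p = k_v$ in $\ZZ_{(p)}$, so the lemma produces an isomorphism
\[
H^*(\Gamma,\vk_p;\ZZ_{(p)}) \cong H^*(\Gamma,\vk;\ZZ_{(p)}).
\]
Taking torsion on both sides identifies $T(\Gamma,\vk_p)$ with the $p$-primary torsion of $H^1(\Gamma,\vk)$, and assembling over all primes $p$ yields the claimed decomposition.

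There is no real obstacle here; the only mildly subtle point is keeping straight which torsion is being used (torsion over $\ZZ$ versus over $\ZZ_{(p)}$), and confirming that the restriction of scalars does not change the torsion, which is immediate because every element killed by an integer is killed by the same integer viewed in $\ZZ_{(p)}$, and conversely $\ZZ_{(p)}$-torsion elements have annihilator in $\ZZ_{(p)}$ generated by a power of $p$ that already lies in $\ZZ$.
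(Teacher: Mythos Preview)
Your argument is correct and follows essentially the same route as the paper: localize at $p$, identify the $p$-primary torsion with the torsion over $\ZZ_{(p)}$, and then invoke Lemma~\ref{invertible} using that $k_v = u_v\cdot (k_v)_p$ with $u_v$ a unit in $\ZZ_{(p)}$. You have simply spelled out more of the standard facts (flatness, the structure theorem, and why $T(\Gamma,\vk_p)$ is already $p$-primary) that the paper leaves implicit.
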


\subsubsection{Orientation of graphs}

We think of the cohomology of weighted graphs to be somewhat analogous to the cohomology of manifolds.  
This analogy is not close, but it does suggest a concept of orientability. For some rings $R$ we will define orientability of a negative color scheme with respect to the ring. After that we will re-interpret orientability in terms of the graph theoretical properties of the graph. In the analogous manifold situation there are essentially two possibilities. We can divide the class of rings into two disjoint classes, according to whether $2=0\in R$ or not. A manifold is orientable with respect to any ring in the first class (for example $\ZZ/2$), and for any ring $R$ in the second class it is orientable if and only if it is orientable for the ring $\ZZ$. The orientability of weighted graphs which we consider now is more subtle, especially when we consider the rings $R=\ZZ/2^s$. 
Given a  negative color scheme $(\Gamma,\vk)$ , a key question for us will be to understand for which rings $R$ this color scheme is $R$-oriented.

We note that this form of orientability has nothing to do with the concept of orientation used by Kontsevich in his definition of graph cohomology (\cite{Kontsevich}, see also \cite{CV}). 

The bipartite graphs will play a special role. Recall that a bipartitioning of $\Gamma$ is given by
a map $\alpha:V(\Gamma)\to \{\pm 1\}$, such that if $e$ is an edge, the values of $\alpha$ at the endpoints of $e$
are different. We say that $\Gamma$ is a bipartite graph if it has a bipartitioning. If $\Gamma$ is connected and bipartite, the map $\alpha$ is uniquely determined (up to sign) by $\Gamma$.

A map $\alpha:V(\Gamma)\to\{ \pm 1\}$ 
defines a \textit{fundamental chain}
\[
\fund{\Gamma,\vk}=\sum_v \alpha(v){k_v} v \in C^0(\Gamma,\vk).
\]
This chain is relevant for the cohomology, because
if $\alpha$ is a bipartitioning, then the fundamental chain of 
$\Gamma$ is a cycle in $C^0(\Gamma,\vk)$:
\[
d^0(\sum_v \alpha(v){k_v} v) = \sum_{e\in E(\Gamma)} (\alpha(v)+\alpha(w))k_vk_w=0.
\]

Each coefficient of $\fund{\Gamma,\vk}$ is divisible by the greatest common divisor $GCD(\vk)$, so it makes sense 
to define the divided fundamental classes 
\[
{\funddivZ {\Gamma,\vk}}=\frac 1{GCD(\vk)}\fund{\Gamma,\vk}\in C^0(\Gamma,\vk).
\] 
By the argument above, it follows that this class is also a cycle in
$C^0(\Gamma,\vk)$. 

Suppose that $\Delta\subset \Gamma$ is a bipartite subgraph. We restrict the
weights of $\Gamma$ to weights of $\Delta$. The inclusion of the
sets of vertices $V(\Delta) \subset V(\Gamma)$ defines an inclusion of 0-chains:
$C^0(\Delta,\vk)\subset C^0(\Gamma,\vk)$, but this inclusion is not compatible with the boundary map.

We consider the fundamental chain $\fund{\Delta,\vk}$ as an element of $C^0(\Gamma,\vk)$, but keep in mind that this chain is not necessarily a cycle. 
Similarly, we write the image of the divided fundamental class  in 
$C^0(\Gamma,\vk)$ as ${\funddivZ {\Gamma,\vk}}$.

The boundary of the fundamental chain of $\Delta$ is $\fundbd{\Delta,\vk} \in C^1(\Gamma,\vk)$. We write 
 $\fundbd{\Delta,\vk}=\sum x_e e$. If $e\in E(\Delta)$, then the coefficient $x_e=0$. The only non-trivial contributions to the sum are due to the edges  $e=e(v,w)$ such that $v\in V(\Delta)$, but $e\not\in E(\Delta)$. The set of such edges is known as the edge boundary of $\Delta$ in $\Gamma$.
By another slight abuse of notation, for any unitary ring $R$, we let $\fund{\Delta,\vk}$, $\funddivZ{\Delta,\vk)}$ and  $\fundbd{\Delta,\vk}$  denote the images of these classes in $C^0(\Gamma,\vk;R)$ respectively $C^1(\Gamma,\vk;R)$. 

We are now ready to define orientability for a color scheme with coefficients for certain rings $R$. 
This definition will depend on the ring of coefficients $R$. We will not give a unified
definition valid for all rings, but rather ad hoc definitions for those rings which concern us most. 
\begin{defn}
Suppose that the ring $R$ is either $\ZZ$ or a field.
Let $(\Gamma,\vk)$ be a negative color scheme. We say that
$(\Gamma,\vk)$ is $R$-oriented if $H^0(\Gamma,\vk)\cong R$. An $R$-orientation
of $(\Gamma,\vk)$ is a generator of $H^0(\Gamma,\vk)$ as an  $R$-module.
\end{defn}

This was rather straightforward. Next consider the ring $R=\ZZ/p^s$, where $p$ is a prime. There are classes in $H^0(\Gamma,\vk;\ZZ/p^s)$ that play a similar role to the orientations in the case $R=\ZZ$ or for fields, but unfortunately, the situation is less intuitive in this case.  We are going to formulate orientability with coefficients in the rings $\ZZ/p^s$ in a slightly different way. The definition is going to look weird at first glance, but you will thank us later. Let $i:\ZZ/p^{s-1}\subset \ZZ/p^s$ be the standard inclusion $i(x)=px$. 

\begin{defn}
 The critical cohomology $\CH(\Gamma,\vk;\ZZ/p^s)$ is
the cokernel of the induced map 
\[
i_*:H^0(\Gamma,\vk;\ZZ/p^{s-1})\to H^0(\Gamma,\vk;\ZZ/p^{s}).
\]  
\end{defn}
It follows from the long exact sequence of the cohomology associated to the short exact sequence of coefficients $0\to \ZZ/p^{s-1}\xrightarrow{i} \ZZ/p^s \to \ZZ/p \to 0$ that there is an injective map 
$\CH(\Gamma,\vk;\ZZ/p^s)\to H^0(\Gamma,\vk;\ZZ/p)$. In particular, $\CH(\Gamma,\vk;\ZZ/p^s)$ is a $\ZZ/p$-vector space.

\begin{defn}
 A connected negative color scheme $(\Gamma,\vk)$ is $\ZZ/p^s$-oriented if and only if $\CH(\Gamma,\vk;\ZZ/p^s) \cong \ZZ/p$. If $\Gamma$ is connected, a $\ZZ/p^s$-orientation class of $(\Gamma,\vk)$ is a class in
$H^0(\Gamma,\vk;\ZZ/p^{s})$ which maps to a generator of the 
cokernel.
If $\Gamma$ is not connected, we say that $\Gamma$ is $\ZZ/p^s$-oriented if each component of $\Gamma$
is $\ZZ/p^s$-oriented. We define an orientation class of $\Gamma$
to be family consisting of an orientation class for each component of $\Gamma$.
\end{defn}

We need to be able to recognize when a class in $C^0(\Gamma,\vk,\ZZ/p^s)$  is a $\ZZ/p^s$ - orientation class. Here is a criterion 
which will be useful.

\begin{lemma}
\label{lemma:orientation}
  Let $\Gamma$ be connected.
  A cocycle $z\in C^0(\Gamma,\vk;\ZZ/p^s)$ is a $\ZZ/p^s$-orientation class if and only if the following two
conditions are satisfied:
\begin{itemize}
\item The order of $z$ is $p^s$.
\item If $u\in C^0(\Gamma,\vk;\ZZ/p^s)$ is any cocycle, there is an integer $n$ 
such that $p^{s-1}(u-nz)=0$. 
\end{itemize}
\begin{proof}
  We first prove that an orientation satisfies the two conditions.
  If the order of $z=\sum_v z_v v\in C^0(\Gamma,\vk;\ZZ/p^s)$ is less than $p^s$, every $z_v\in i(\ZZ/p^{s-1})$,
so $z$ is in the image of $i_*:C^0(\Gamma,\vk;\ZZ/p^{s-1}) \subset C^0(\Gamma,\vk;\ZZ/p^s) $. But then the image
of $z$ in the critical cohomology is trivial, so that $z$ isn't an orientation. Also, if $z$ 
is an orientation and $u$ is a cocycle, there is an integer $n$ such that $u-nz$ is the image of 
a cycle under $i_*$. But every element in $C^0(\Gamma,\vk;\ZZ/p^{s-1})$ has order dividing $p^{s-1}$. It follows that $u-nz$ has order dividing $p^{s-1}$. 

For the converse implication, 
we prove that if $z$ satisfies the two conditions, then it is an orientation class.
Since the cokernel of $i_*$ is a $\ZZ/p$ vector space, 
the second condition ensures that the image of $z$ generates this cokernel, so that either $H^1(\Gamma,\vk,\ZZ/p)$ is
a 1-dimensional vector space generated by the image of $z$, or this group is trivial.
The first condition ensures that $z$ has non-trivial image in the cokernel
$C^0(\Gamma,\vk;\ZZ/p)$. Since there are no boundaries in
$C^0(\Gamma,\vk;\ZZ/p)$, it follows that the image of $z$ defines a non-trivial
cohomology class.
We deduce that $z$ is an orientation class.   
\end{proof}
\end{lemma}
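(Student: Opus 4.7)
The plan is to prove both directions by carefully translating between chain-level and cohomology-level statements. Because $C^{-1}=0$, the cohomology $H^0(\Gamma,\vk;\ZZ/p^s)$ is literally the subgroup of cocycles in $C^0(\Gamma,\vk;\ZZ/p^s)$, so every cohomology class is represented uniquely at the chain level. The one algebraic fact I will use throughout is: for $x\in\ZZ/p^s$, the condition $p^{s-1}x=0$ is equivalent to $x\in p(\ZZ/p^s)=i(\ZZ/p^{s-1})$. Applied coefficient-wise, this says that a chain $w\in C^0(\Gamma,\vk;\ZZ/p^s)$ satisfies $p^{s-1}w=0$ exactly when $w=i(y)$ for some chain $y\in C^0(\Gamma,\vk;\ZZ/p^{s-1})$. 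Moreover, since $i$ is injective on chains and commutes with $d^0$, if $w$ happens to be a cocycle then $y$ is automatically a cocycle.

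For the forward direction, assume $z$ is an orientation class, i.e.\ its image generates $\CH(\Gamma,\vk;\ZZ/p^s)\cong\ZZ/p$. If the order of $z$ were smaller than $p^s$, then $p^{s-1}z=0$, so by the observation above $z=i(y)$ for a cocycle $y$, and hence $[z]\in\im i_*$, contradicting that $[z]$ represents a generator of the cokernel. For the second condition, given any cocycle $u$ the image of $[u]$ in $\CH\cong\ZZ/p$ is $n[z]$ for some integer $n$, so $u-nz=i(y')$ at the chain level for some cocycle $y'$; then
\[
p^{s-1}(u-nz)=p^{s-1}i(y')=p^{s-1}\cdot p\, y'=p^s y'=0
\]
in $\ZZ/p^s$.

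For the converse, assume the two conditions hold. Given any cocycle $u$, the second condition produces $n$ with $p^{s-1}(u-nz)=0$; the algebraic fact then writes $u-nz=i(y')$ at the chain level with $y'$ a cocycle, so $[u]-n[z]\in\im i_*$ in $H^0$. This shows that $[z]$ generates $\CH(\Gamma,\vk;\ZZ/p^s)$ modulo $\im i_*$. The first condition ensures $[z]\notin\im i_*$: otherwise $z=i(y)$ as a chain, forcing $p^{s-1}z=p^s y=0$ and contradicting that the order of $z$ is $p^s$. Since $\CH(\Gamma,\vk;\ZZ/p^s)$ is a $\ZZ/p$-vector space (via the injection into $H^0(\Gamma,\vk;\ZZ/p)$ from the long exact sequence) and is generated by a nonzero element, it is isomorphic to $\ZZ/p$ and $z$ qualifies as an orientation class.

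The main obstacle is really bookkeeping rather than substance: one has to be careful to distinguish the chain-level statement (a coefficient is divisible by $p$ in $\ZZ/p^s$) from the cohomology-level statement (a class lies in $\im i_*$), and verify that the passage between them is clean. The fact that $H^0$ coincides with cocycles makes this passage automatic, which is what allows both the first condition (an order statement on the cocycle $z$) and the second condition (a chain-level equation $p^{s-1}(u-nz)=0$) to be translated directly into the defining property of the cokernel $\CH(\Gamma,\vk;\ZZ/p^s)$.
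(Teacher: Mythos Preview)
Your proof is correct and follows essentially the same approach as the paper's: both directions hinge on the equivalence, at the chain level, between $p^{s-1}w=0$ and $w\in\im(i)$, together with the fact that $H^0$ equals the cocycles so no coboundary ambiguity arises. Your write-up is in fact a bit more explicit than the paper's (you spell out why $y$ is a cocycle when $i(y)$ is, and you argue non-triviality of $[z]$ in $\CH$ directly via $[z]\notin\im i_*$ rather than detouring through $H^0(\Gamma,\vk;\ZZ/p)$), but the logical skeleton is the same.
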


\subsection{$R$-orientability and  properties of $\Gamma$}
We first consider the rings where the definition of orientability does not involve the critical cohomology.

\begin{lemma}
  \label{lemma:Z.orientation}
Let $R$ be either a field or $\ZZ$. 
If $\Gamma$ is bipartite,
any connected negative color scheme $(\Gamma,\vk)$ is $R$-orientable.
and $\funddivZ{\Gamma,\vk}$ is an $R$-orientation class of $(\Gamma,\vk)$.

Let $\Gamma$ be a connected graph and let $R$ be either $\ZZ$ or a field which is not of characteristic 2.
Assume that $k_v\not=0\in R$ for each $v\in V(\Gamma)$.
If a negative color scheme $(\Gamma,\vk)$ is $R$-orientable
then $\Gamma$ is bipartite.  

If $R$ is a field of characteristic 2,
any connected negative color scheme is $R$-oriented.  
  \begin{proof}    
We give the proof for $R=\ZZ$. 

If $z=\sum z_v v\in C^0(\Gamma,\vk)$ is
a cocycle, we have that for any edge $e(v,w)$ of $\Gamma$, the coefficient of $e(v,w)$ 
in $d^0(z)$ is trivial. This coefficient is $k_vz_w+k_wz_w$, so that
$k_{v}z_{w}=-k_{w}z_{v}$.
Because $\Gamma$ is connected, it follows that all numbers $z_{v}/k_{v}\in \QQ^*$ agree up to a sign. The sign determines a bipartitioning of the graph, so that if $\Gamma$ is not bipartite, there are no non-trivial cycles in $C^0(\Gamma,\vk)$ and $H^0(\Gamma,\vk)=0$.   

Conversely, assume that $\Gamma$ is bipartite. The divided fundamental class $\funddivZ{\Gamma,\vk}$ is a non-trivial cocycle. In order to show that it is a $\ZZ$-orientation class, we have to show that it generates $H^0(\Gamma,\vk)$.
A cocycle $z\in C^0(\Gamma,\vk)$ is a sum
$z=\sum_v z_v v$ where $k_vz_w=-k_wz_v$. It follows that there is a rational number $\frac mn$ ($m,n$ relatively prime) such that  
$z_v=\frac mn\alpha(v)k_v$ for every $v$. Since $z_v\in \ZZ$,  
${k_v}/n$ is an integer for all $v$. This is equivalent to saying that $n$ divides $GCD(\vk)$, 
and $z=m \frac {GCD(\vk)}n\funddivZ{\Gamma,\vk}$. It follows that the cocycles of $C^0(\Gamma,\vk)$ are precisely the integral multiples of $\funddivZ{\Gamma,\vk}$. This completes the proof the lemma in the case $R=\ZZ$.

Now assume that $R$ is a field not of characteristic 2.  The proof is essentially the same as the proof for $R=\ZZ$. We omit it.

If $R$ is a field of characteristic 2, the class 
$
\sum_{v\in V(\Gamma)}v
$
is a nontrivial cycle. By an argument similar to the argument in the case $R=\ZZ$, we see that every cycle is a scalar multiple of this class.
\end{proof}
\end{lemma}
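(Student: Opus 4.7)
The plan is to unpack the cocycle condition $d^0(z)=0$ vertex by vertex. Writing a chain as $z = \sum_v z_v v \in C^0(\Gamma,\vk;R)$, the condition amounts to $k_v z_w + k_w z_v = 0$ for every edge $e(v,w)$, the relation already noted in the text just before the lemma. Propagating this single relation along paths in the connected graph $\Gamma$ will drive the entire argument.

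For the forward direction with $R = \ZZ$, I would show that every cocycle is an integer multiple of $\funddivZ{\Gamma,\vk}$. Fix a base vertex $v_0$ and a bipartitioning $\alpha$, and apply the relation $k_v z_w = -k_w z_v$ along a spanning tree rooted at $v_0$; the conclusion, valid in $\QQ$, is that $z_v/k_v = \alpha(v)\alpha(v_0)\, z_{v_0}/k_{v_0}$ for every $v$. Hence a single rational scalar $c$ satisfies $z_v = c\,\alpha(v) k_v$ for all $v$, and the integrality $z_v \in \ZZ$ for each $v$ forces the denominator of $c$ in lowest terms to divide every $k_v$, hence to divide $GCD(\vk)$. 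Thus $z$ is an integer multiple of $\funddivZ{\Gamma,\vk}$. For an arbitrary field $R$ the argument is essentially identical after replacing $\QQ$ by $R$; each $k_v$ is invertible by the hypothesis $k_v\neq 0\in R$.

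For the converse, assume $\Gamma$ is connected but not bipartite, so there is an odd cycle $v_0, v_1,\ldots, v_n = v_0$. Iterating the relation around the cycle in the fraction field of $R$ gives $z_{v_0}/k_{v_0} = (-1)^n z_{v_0}/k_{v_0}$, and since $n$ is odd this reads $2 z_{v_0}/k_{v_0} = 0$. With $2$ a unit in $R$ and $k_{v_0}$ nonzero, I conclude $z_{v_0} = 0$; connectedness propagates this to every vertex, so $H^0(\Gamma,\vk;R) = 0 \not\cong R$.

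In characteristic 2 the sign in the cocycle relation disappears, giving $k_v z_w = k_w z_v$ on every edge. I would exhibit the explicit class $\sum_v k_v v$ as a cocycle (its boundary on the edge $e(v,w)$ contributes $k_v k_w + k_w k_v = 2k_v k_w$, which vanishes), and then run the sign-free version of the path argument to identify every cocycle as a scalar multiple of this one. The main delicate point throughout is the integrality step in the $R = \ZZ$ case: one has to pass carefully between $\QQ$ and $\ZZ$ to pin down that the denominator of the rational scalar $c$ divides $GCD(\vk)$ exactly, rather than merely some common divisor of a subset of the $k_v$. Beyond that, everything is an elementary path-chase.
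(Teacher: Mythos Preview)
Your proof is correct and follows essentially the same path as the paper's: extract the edge relation $k_v z_w = -k_w z_v$ from the cocycle condition, propagate it through the connected graph to obtain $z_v = c\,\alpha(v) k_v$ for a single scalar $c$, and in the $R=\ZZ$ case pin down integrality via divisibility by $GCD(\vk)$; your explicit odd-cycle contradiction is just a presentational variant of the paper's remark that the sign of $z_v/k_v$ would define a bipartitioning. One small point worth flagging: in characteristic $2$ you exhibit $\sum_v k_v v$ as the generating cocycle, whereas the paper writes $\sum_v v$ --- your choice is the correct one, since $d^0\bigl(\sum_v v\bigr)$ has edge-coefficient $k_v + k_w$, which need not vanish modulo $2$.
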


\begin{corollary}
  \label{rank}
  Let $\Gamma$ be a connected graph.
\begin{gather*}
\mathrm{rank}(H^0(\Gamma,\vk))=
\begin{cases}
  1 &\text{ if $\Gamma$ is bipartite,}\\
  0 &\text{ if $\Gamma$ is not bipartite.}
\end{cases}\\
\mathrm{rank}(H^1(\Gamma,\vk))=
\begin{cases}
  \#E(\Gamma)-\#V(\Gamma)+1 &\text{ if $\Gamma$ is bipartite,}\\
  \#E(\Gamma)-\#V(\Gamma) &\text{ if $\Gamma$ is not bipartite.}
\end{cases}
\end{gather*}
\begin{proof}
  The result for $(H^0(\Gamma,\vk))$ is the previous lemma.
The difference in ranks between  $H^0(\Gamma,\vk)$ and $H^1(\Gamma,\vk)$
agrees with the negative Euler characteristic 
$\#E(\Gamma)-\#V(\Gamma)$ of $C^*(\Gamma,\vk)$. This gives the result for 
$H^1(\Gamma,\vk)$.
\end{proof}
\end{corollary}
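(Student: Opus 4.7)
The statement really has two separate pieces, and both follow quickly once one decides to work over $\QQ$ and track the Euler characteristic of the tiny two-term cochain complex $C^*(\Gamma,\vk)$.

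First I would reduce the rank computation to a computation of $\QQ$-dimensions. By Corollary \ref{rank1}, the ranks of $H^0$ and $H^1$ are independent of $\vk$; more concretely, $\mathrm{rank}(H^i(\Gamma,\vk))=\dim_{\QQ} H^i(\Gamma,\vk;\QQ)$ since ranks survive tensoring with $\QQ$. Now I can invoke Lemma~\ref{lemma:Z.orientation} with $R=\QQ$, a field of characteristic different from $2$ in which each $k_v$ is a nonzero element. That lemma says that for connected $\Gamma$ the scheme $(\Gamma,\vk)$ is $\QQ$-oriented precisely when $\Gamma$ is bipartite, so $\dim_{\QQ} H^0(\Gamma,\vk;\QQ)=1$ in the bipartite case and $0$ otherwise (the proof of the lemma actually exhibits the space of cocycles as the span of the divided fundamental class, or as zero, respectively). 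This handles the $H^0$ row.

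For the $H^1$ row, I use that the only nontrivial cochain groups are $C^0(\Gamma,\vk;\QQ)$ and $C^1(\Gamma,\vk;\QQ)$ with $\QQ$-dimensions $\#V(\Gamma)$ and $\#E(\Gamma)$. The Euler characteristic of the complex equals the Euler characteristic of its cohomology, so
\[
\#V(\Gamma)-\#E(\Gamma)=\dim_{\QQ} H^0(\Gamma,\vk;\QQ)-\dim_{\QQ} H^1(\Gamma,\vk;\QQ),
\]
whence
\[
\mathrm{rank}(H^1(\Gamma,\vk))=\#E(\Gamma)-\#V(\Gamma)+\mathrm{rank}(H^0(\Gamma,\vk)).
\]
Plugging the two values $1$ and $0$ for $\mathrm{rank}(H^0)$ from the previous step gives exactly the two formulas stated in the corollary.

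There is no real obstacle here; the only potential pitfall is making sure one may indeed appeal to Lemma~\ref{lemma:Z.orientation} with $R=\QQ$, which requires the hypothesis $k_v\neq 0\in R$. Since $k_v$ is a positive integer by assumption on a negative color scheme, this hypothesis is automatic, and the chain of reductions goes through verbatim.
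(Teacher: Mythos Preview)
Your proof is correct and follows essentially the same approach as the paper: invoke Lemma~\ref{lemma:Z.orientation} for the rank of $H^0$, then use the Euler characteristic of the two-term complex to read off the rank of $H^1$. The only cosmetic difference is that you work explicitly over $\QQ$ and cite Corollary~\ref{rank1}, whereas the paper simply points to the previous lemma and the Euler characteristic without further elaboration.
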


\begin{defn}
  A negative color scheme $(\Gamma,\vk)$ is $R$--reduced if there are no edges
$e(v,w)\in E(\Gamma)$ such that  $k_vk_w=0 \in R$.
\end{defn}
\begin{remark}
  If $R$ is a field, $(\Gamma,\vk)$ connected and $R$-reduced, then either $\Gamma$ is a single vertex graph or for every $v\in V(\Gamma)$ there is a $w\in V(\Gamma)$ such that $k_vk_w\not=0$. It follows that $k_v\not=0$ for all $v$. So we can reformulate part of lemma~\ref{lemma:Z.orientation} to say that if $p$ is an odd prime, $(\Gamma,\ZZ/p)$ is oriented and $\ZZ/p$--reduced, then $\Gamma$ is bipartite.
\end{remark}

\begin{defn}
  Let $(\Gamma,\vk)$ be a negative color scheme. The $\ZZ/p^s$ reduction $\red{p^s}(\Gamma,\vk)$
  is the negative color scheme we obtain from $(\Gamma,\vk)$ by removing all edges $e(v,w)\in E(\Gamma)$ such that
$v_p(k_vk_w)\geq s$.
\end{defn}

We can express $\red{p^s}(\Gamma,\vk)$ as $(\Delta,\vk)$ where $\Delta\subset \Gamma$ is the subgraph with
$V(\Delta)=V(\Gamma)$ and $E(\Delta)=\{e\in E(\Gamma)\mid k_{v(e)k_{w(e)}}\not=0\mod p^s\}$.
$\red{p^s}(\Gamma,\vk)$ is $\ZZ/p^s$ reduced.
The inclusion of this sub-scheme defines a map of chain complexes 
$R^*:C^*(\Gamma,\vk;\ZZ/p^s) \to C^*(\red{p^s}(\Gamma,\vk);\ZZ/p^s)$ 
which is an isomorphism in degree 0.
If $(\Gamma,\vk)$ is already $\ZZ/p^s$-reduced, $R^*$ is an isomorphism
of chain complexes.  

The rest of this section will deal with $R$-reduced negative color schemes.
Our goal is to giving a graph theoretical
condition on such schemes that is equivalent to the condition that the color scheme is $R$-oriented.
We will see that in the case $R=\ZZ/p^s$, this is easier to deal with for odd primes $p$ than for $p=2$.

We are going to deal with certain arithmetic properties of graphs. Before we start discussing this, we make two preliminary remarks.
\begin{remark}
One type of argument we are going to use repeatedly is the following. If we want to prove 
that a certain statement about vertices in a connected graph is true for all vertices of the graph, it is sufficient to
prove the induction start: there is at least one vertex for which the statement holds, together with the induction step:
if $e(v,w)$ is an edge, and the statement is true for $v$, then it is also true for $w$. We will refer to this method as 
``connected induction''.   
\end{remark}

\begin{remark}
In the proofs of the following lemmas, we will use the $p$-adic valuation of a number $x\in\ZZ/p^s$. We consider this as $\val p x \in\{0,\dots,r-1,\infty\}$, with $\val p x=\infty$ if and only if $x=0$.  With the usual convention $\infty + x=\infty$, it follows that $\val p{a}+\val p b\leq \val p{ab}$, and that if $ab\not=0$, we have that $\val p a+\val p b=\val p {ab}$. 
Note that $\mathrm{val}_p$ does not have the full valuation property.  This is because in the case that $ab=0$, it can happen that $\val p a+\val p b < \infty = \val p {ab}$.
But a certain weaker property is satisfied, namely that if $ab\not=0$, then
$\val p {ab}=\val p a+\val p b$. This  means that if $ab\not=0$, then the following two
conclusions are valid:
$\val p {ab} = \val p {ac}$ if and only if $\val p b = \val p c$ and
$\val p {ab} < \val p {ac}$ if and only if $\val p b < \val p c$.
We refer to this as the  ``restricted valuation property''.
\end{remark}

\begin{lemma}[The principle of small cycles]
\label{lemma:subfundamental}
Let
$(\Gamma,\vk)$ be a connected and  $\ZZ/p^s$-reduced negative color scheme.
Let $z=\sum_v z_v v\in C^0(\Gamma,\vk;\ZZ/p^s)$ be a cocycle. Assume that there is a vertex
$v_0\in V(\Gamma)$ such that $\val p {z_{v_0}}\leq \val p {k_{v_0}}$. Then
$c=\val p {k_{v}} - \val p {z_{v}}$ does not depend on $v$. Moreover, $c\geq 0$.
\begin{proof}
Let $c=\val p {k_{v_0}} - \val p {z_{v_0}}\geq 0$. We use connected induction to prove that
$c=\val p {k_{v}} - \val p {z_{v}}$ for all $v\in V(\Gamma)$. The vertex $v_0$ provides the induction start. We have to check the
induction step.
  Let $e(v,w)\in E(\Gamma)$ be an edge. Assume that 
$\val p {k_{v}} - \val p {z_{v}}=c$. 

Since $(\Gamma,\vk)$ is $\ZZ/p^s$-reduced, $\val p {k_v} +\val p {k_w}\leq s-1$. It follows that
$\val p {z_v} +\val p {k_w}= \val p {k_v} -c +\val p {k_w}\leq r-c-1$. Using this, and since
$k_vz_w+ k_wz_v=0$, the restricted valuation property implies that $\val p {k_v} +\val p {z_w}=\val p {k_w} +\val p {z_v}$,
so that  $\val p {k_w} -\val p {z_w}=\val p {k_v} -\val p {z_v}=c$.
\end{proof}
\end{lemma}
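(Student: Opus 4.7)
The plan is to proceed by connected induction on the vertices of $\Gamma$, exactly as recommended in the remark preceding the lemma. The vertex $v_0$ supplies the induction start: the hypothesis gives $c := \val p{k_{v_0}} - \val p{z_{v_0}} \geq 0$ directly, and this is the constant one hopes to propagate.

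For the induction step, let $e(v,w) \in E(\Gamma)$ and suppose $\val p{k_v} - \val p{z_v} = c$. The cocycle condition $d^0(z) = 0$, applied to this edge, forces $k_v z_w + k_w z_v = 0 \in \ZZ/p^s$. From here I would argue with the restricted valuation property of the previous remark, whose hypothesis is that the relevant product is nonzero in $\ZZ/p^s$.

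The heart of the argument is verifying that we are in this nonzero regime. Since $(\Gamma,\vk)$ is $\ZZ/p^s$-reduced, $\val p{k_v} + \val p{k_w} \leq s-1$. Combining this with the inductive hypothesis $\val p{z_v} = \val p{k_v} - c$ and $c \geq 0$, I get
\[
\val p{k_w} + \val p{z_v} = \val p{k_w} + \val p{k_v} - c \leq s-1-c \leq s-1,
\]
so $k_w z_v \neq 0 \in \ZZ/p^s$, and consequently $k_v z_w = -k_w z_v \neq 0$ as well. The restricted valuation property then applies on both sides of the equation $k_v z_w = -k_w z_v$, yielding $\val p{k_v} + \val p{z_w} = \val p{k_w} + \val p{z_v}$, which rearranges to $\val p{k_w} - \val p{z_w} = c$. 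This completes the inductive step, and $c \geq 0$ is preserved automatically because the value is constant.

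The main obstacle is conceptual rather than computational: the relation $k_v z_w + k_w z_v = 0$ lives in $\ZZ/p^s$ and not in $\ZZ$, so one cannot blindly invoke a genuine $p$-adic valuation. The role of the $\ZZ/p^s$-reducedness hypothesis is precisely to guarantee that both summands are nonzero in $\ZZ/p^s$, thereby making the restricted valuation property available; without it, one could have $\val p{k_v} + \val p{z_w}$ exceeding $s-1$, in which case the product would collapse to zero and no valuation information could be extracted across the edge.
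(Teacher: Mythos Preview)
Your proof is correct and follows essentially the same approach as the paper's own proof: connected induction with base vertex $v_0$, using $\ZZ/p^s$-reducedness to ensure $k_w z_v \neq 0$ so that the restricted valuation property applies to the cocycle relation $k_v z_w + k_w z_v = 0$. If anything, your write-up is slightly more explicit than the paper's about why the product is nonzero, and your closing paragraph nicely isolates the role of the reducedness hypothesis.
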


We will now deal with the question of when a connected negative color scheme
$(\Gamma,\vk)$  is $\ZZ/p^s$ -- oriented. 
If the graph has only one vertex and no edges, it follows directly from
the definition that it is $\ZZ/p^s$ -- oriented, so we will assume that
$\Gamma$ has at least one edge.
For odd primes $p$, there are essentially two mutually exclusive possibilities.

\begin{lemma}
\label{lemma.pn.orientation}
Let $p$ be an prime and $\Gamma$ a connected, $\ZZ/p^s$-reduced  graph.
Assume that $\Gamma$ is bipartite. Then $\funddivZ{\Gamma,\vk}$ 
is an $\ZZ/p^s$-orientation of $(\Gamma,\vk)$.
If $p$ is odd and $\Gamma$ is not bipartite, $(\Gamma,\vk)$ is not $\ZZ/p^s$ - oriented. 
  \begin{proof}    
By its definition, at least one coefficient in $\funddivZ{\Gamma,\vk}$
is prime to $p$, so the order of $\funddivZ{\Gamma,\vk}$ in $C^0(\Gamma,\vk;\ZZ/p^s)$ is $p^s$.
According to lemma~\ref{lemma:orientation}, in order to prove that $\funddivZ{\Gamma,\vk}$
is an orientation class,  it suffices to prove that
if $z\in C^0(\Gamma,\vk;\ZZ/p^s)$ is any cocycle, there is an integer $n$ such that
$z-n\funddivZ{\Gamma,\vk}\subset pC^0(\Gamma,\vk;\ZZ/p^s)$.

Let $z =\sum_v z_v v \in C^0(\Gamma,\vk)$ be a cocycle.
If all the coefficients $z_v$ are divisible with $p$, we chose $n=0$, and we are already done.
So we can as well assume that there is a vertex $v_0$ such that $z_{v_0}$ is not divisible by $p$. 
In this case $\val p {z_{v_0}} =0 \leq \val p {k_{v_0}}$. According to
lemma~\ref{lemma:subfundamental}, the principle of small cycles,  it follows that
$\val p {k_{v}}- \val p {z_{v}}=c$ is independent of $v$ where $c\geq 0 $.
In particular, $\val p{z_{v}}\leq \val p{k_v}$ for all $v\in V(\Gamma)$. 
Now write $\funddivZ{\Gamma,\vk}=\sum_v a_v v$.

Let $v_1$ be the vertex where $\val p{k_v}$ attains its minimum, so that
the coefficient $a_{v_1}$ is invertible modulo $p$.
There is an $n\in \ZZ$ such that $\val p{z_{v_1}- na_{v_1}} > \val p {z_{v_1}}$.
The next step is to use connected induction to prove that 
for every $v\in V(\Gamma)$
\[
\tag{*}
\val p{z_{v}- na_{v}} > \val p{z_v}.
\]
The induction start is that $(*)$ is true for the vertex $v=v_1$.

For the induction step, assume that $e(v,w)\in E(\Gamma)$ and that            
$(*)$ is true for $v\in V(\Gamma)$. 
Since $\Gamma$ is $R$-reduced we have that
$\val p{ z_vk_w} \leq \val p { k_vk_w }<r$, so $z_vk_w\not =0$.
By $(*)$ and the restricted valuation property
$\val p {(z_v-na_v)k_w} > \val p {z_vk_w}$.  
Using that $a_vk_w=-k_va_w$ and $z_vk_w=-z_wk_v$ we get
that 
\[
\val p{k_vz_w-nk_va_w}=\val p{nk_wa_v-k_wz_v}=\val p{-(z_v-na_v)k_w}>\val p{k_vz_w}. 
\]
Since $z_vk_w\not =0$ the restricted valuation property shows that
$\val p{z_w-na_w}>\val p{z_w}$ as required.

It follows from $(*)$ that 
$z-n\funddivZ{\Gamma,\vk}$ is divisible by $p$ for every $v\in V(\Gamma)$.
We have now finished the proof that $\funddivZ{\Gamma,\vk}$ 
is an orientation class.

We  prove the converse statement.
Assume that $p$ is odd, and that $\Gamma$ is not bipartite.
We want to show that $(\Gamma,\vk)$ is not $\ZZ/p^s$ oriented.
Since $\Gamma$ is not bipartite, there exists a sequence consisting of an
odd number of vertices $v_0,v_1,\dots,v_{2m}$ such that every $v_i$ is connected
to $v_{i+1}$ by an edge, and $v_{2m}$ is connected to $v_0$. 
Assume that $z$ is an orientation of $(\Gamma,\vk)$. We can write
$z=\sum n_v p^{a_v} v$ and where each $n_v$ is prime to $p$. We also write
$k_v=m_v p^{b_v}$ where $m_v$ is prime to $p$. It follows that
$m_vn_w p^{a_v+b_w}+m_wn_vp^{a_w+b_v}=0 \mod p^s$.
Because of the principle of small cycles (lemma~\ref{lemma.pn.orientation}), there are two possibilities. Either
$a_v> b_v$ for all $v$, or $a_v\leq b_v$ for all vertices $v$.
Since $z$ has order $p^s$ by lemma~\ref{lemma:orientation}, there is at least one
$v$ such that $a_v=0$, which means that we can exclude the case $a_v> b_v$. 

Since we can thus assume that $a_v\leq b_v$ for all $v$, we see that
$a_v+b_w\leq b_v+b_w<s$. It follows from this and the equality
$m_vn_w p^{a_v+b_w}+m_wn_vp^{a_w+b_v}=0 \mod p^s$
that
$m_vn_w+m_wn_v=0\mod p$, and $m_v/n_v = - m_w/n_w \mod p$. Chasing this through the
vertices of the sequence, we get that 
\[
m_{v_0}/n_{v_0}=-m_{v_1}/n_{v_1}=m_{v_2}/n_{v_2}=\dots =-m_{v_0}/n_{v_0}\mod p
\]
Since $p$ is odd, this is a contradiction. We conclude that $(\Gamma,\vk)$ 
cannot have an orientation. 
\end{proof}
\end{lemma}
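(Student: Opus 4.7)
My plan is to verify the two criteria of Lemma~\ref{lemma:orientation} for the divided fundamental class in the bipartite case, and to derive a parity contradiction along an odd closed walk in the non-bipartite case.

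For the first assertion I would start with the order condition. By construction $\funddivZ{\Gamma,\vk}$ is obtained from $\fund{\Gamma,\vk}$ by dividing through $GCD(\vk)$, so at least one coefficient is coprime to $p$ and the class has exact order $p^s$ in $C^0(\Gamma,\vk;\ZZ/p^s)$. For the second criterion, let $z=\sum z_v v$ be an arbitrary cocycle. If every $z_v$ is divisible by $p$ the choice $n=0$ already suffices, so I may assume there is a vertex $v_0$ with $\val p{z_{v_0}}=0\leq \val p{k_{v_0}}$. Lemma~\ref{lemma:subfundamental} then applies and yields a common nonnegative constant $c=\val p{k_v}-\val p{z_v}$, and in particular $\val p{z_v}\leq \val p{k_v}$ at every vertex. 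Writing $\funddivZ{\Gamma,\vk}=\sum a_v v$, I would pick a vertex $v_1$ where $\val p{k_v}$ is minimal so that $a_{v_1}$ is a $p$-unit, and choose an integer $n$ cancelling the leading $p$-adic term of $z_{v_1}$ against $na_{v_1}$. Then I propagate the inequality $\val p{z_v-na_v}>\val p{z_v}$ across each edge $e(v,w)$ by combining the cocycle relations $k_w z_v=-k_v z_w$ and $k_w a_v=-k_v a_w$; the $\ZZ/p^s$-reducedness ensures $z_v k_w\neq 0$, so the restricted valuation property transports the inequality from $v$ to $w$ by connected induction.

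For the second assertion let $p$ be odd and $\Gamma$ non-bipartite, and suppose for contradiction that $(\Gamma,\vk)$ admits an orientation class $z=\sum n_v p^{a_v}v$, with $n_v$ coprime to $p$, and write $k_v=m_v p^{b_v}$ analogously. The principle of small cycles leaves the dichotomy ``$a_v>b_v$ everywhere'' versus ``$a_v\leq b_v$ everywhere''; the first alternative is excluded by the order condition of Lemma~\ref{lemma:orientation}, which forces some $a_v=0$. I would then fix an odd closed walk $v_0,v_1,\dots,v_{2m},v_0$ in $\Gamma$. On each consecutive edge the cocycle equation combined with the bound $a_v+b_w\leq b_v+b_w<s$ reduces to $m_v n_w+m_w n_v\equiv 0\pmod p$, i.e.\ $m_v/n_v\equiv -m_w/n_w\pmod p$. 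Chaining this around the walk gives $m_{v_0}/n_{v_0}\equiv -m_{v_0}/n_{v_0}\pmod p$, which for odd $p$ forces $m_{v_0}\equiv 0\pmod p$, contradicting the fact that $m_{v_0}$ is a $p$-unit.

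The main obstacle is the inductive propagation in the bipartite case: one must ensure that a choice of $n$ adjusted to a single vertex $v_1$ really does lift to $z-n\funddivZ{\Gamma,\vk}\in pC^0(\Gamma,\vk;\ZZ/p^s)$ at every other vertex. The delicate point is that at every edge the intermediate products $z_v k_w$ must be nonzero in $\ZZ/p^s$ so that the restricted valuation property, rather than only the weaker one-sided inequality, applies; this is precisely why the hypothesis that $(\Gamma,\vk)$ is $\ZZ/p^s$-reduced is indispensable throughout the argument.
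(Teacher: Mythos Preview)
Your proposal is correct and follows essentially the same route as the paper's proof: both use Lemma~\ref{lemma:orientation} together with the principle of small cycles and connected induction to propagate the inequality $\val p{z_v-na_v}>\val p{z_v}$ across edges in the bipartite case, and both derive the sign-flip relation $m_v/n_v\equiv -m_w/n_w\pmod p$ along an odd closed walk in the non-bipartite case. You also correctly isolate the role of $\ZZ/p^s$-reducedness in guaranteeing $z_vk_w\neq 0$, which is exactly what makes the restricted valuation property available in the induction step.
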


\subsection{The case $R=\ZZ/2^s$}
Lemma~\ref{lemma.pn.orientation}
shows that bipartite graphs are $\ZZ/2^s$-oriented, but
it's not true that a $\ZZ/2^s$-oriented graph is necessarily bipartite.

Let $\Gamma$ be a connected $\ZZ/2^s$-reduced graph. The reduction
$\Gamma'=\red {2^{s-1}}\Gamma$ is not necessarily connected.
Now assume that $\Gamma'$ has a bipartitioning $\alpha$. 
That is, we assume that there is 
a map $\alpha:V(\Gamma)\to \pm 1$ such that if $e(v,w)\in E(\Gamma)$  
and $\val 2 {k_vk_w}\leq s-2$, then $\alpha(v)=-\alpha(w)$. 
This amounts to choosing a bipartitioning of each component of $\Gamma'$.
Under this assumption, by a slight extension of notation,
we define the fundamental class and the divided fundamental class of $(\Gamma,\vk)$
to be
\begin{align*}
\fund{\Gamma,\vk}&=\sum_v \alpha(v){k_v} v \in C^0(\Gamma,\vk),\\
\funddivZ{\Gamma,\vk}&=\sum_v \alpha(v)\frac{k_v}{GCD(\vk)} v \in C^0(\Gamma,\vk).\\
\end{align*}
This fundamental class is a cocycle:
\[
d^0\fund{\Gamma,\vk}=\sum_{e(v,w)\in E(\Gamma')} 
(\alpha(v){k_vk_w} + \alpha(w)k_wk_v)+
\sum_{e(v,w)\in E(\Gamma)\setminus E(\Gamma')}
 2k_vk_w =0+0=0
\]
The fundamental class depends on the choice of bipartitioning of $\Gamma'$, but
if there is no good reason to do otherwise, we will suppress this dependence in the notation. 

We will need the following ugly lemma twice in the proof of lemma~\ref{lemma.two.orientation}.
\begin{lemma}
\label{little.argument}
Let $\Gamma$ be a graph. Let $a,b:V(\Gamma)\to \NN$ be two positive functions.
Assume that there is non-negative integer $q$ such that the following three
conditions are satisfied:
\begin{enumerate}
\item For every vertex
$v$ we have that $\val 2 {b_v}=q+\val 2 {a_v}$.
\item If $e(v,w)\in E(\Gamma)$, then $a_vb_w+a_wb_v=0 \mod 2^r$.
\item  If $e(v,w)\in E(\Gamma)$, then $\val 2{a_va_w} \leq r-q-2$.
\end{enumerate}
Then $\Gamma$ is bipartite.
\begin{proof}
Since $\val 2{b_v} = q+ \val 2{a_v}$ we can find odd
integers $n_v$ such that $n_v 2^q a_v=b_v \mod 2^r$.  
If $e(v,w)\in E(\Gamma)$,  it follows that  
\[
n_v 2^q a_va_w+n_w 2^q a_va_w = a_wb_v+a_vb_w= 0\mod 2^r,
\]
so that $\val  2 {n_v+n_w} +\val 2 {a_va_w}\geq r - q$.
We rearrange:
\[
\val 2 {n_v+n_w}\geq r-q -\val 2 {a_va_w} \geq  r-q -(r-q-2)=2,
\]
 so that $n_v+n_w=0\mod 4$. It follows that for $v,w$ in the same component of $\Gamma$, 
the reduction modulo 4 of the odd numbers $n_v,n_w$ agree
up to sign. On the other hand, if $e(v,w)\in E(\Gamma)$ we have that $n_v\not=n_w\mod 4$, since $n_v$ is odd. 
The value of $n_v=\pm 1 \mod 4$ defines a bipartitioning of $\Gamma$. 
\end{proof}
\end{lemma}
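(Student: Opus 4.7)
The plan is to extract from the data an odd integer $n_v$ attached to each (non-isolated) vertex, and to show that the residue of $n_v$ modulo $4$ yields a bipartitioning of $\Gamma$. Condition (1) gives $\val 2 {b_v}-\val 2 {a_v}=q$, so $b_v/(2^q a_v)$ is a 2-adic unit; concretely, for every $v$ we can choose an odd integer $n_v$ with $b_v\equiv 2^q n_v a_v\pmod {2^r}$ (for an isolated vertex any $n_v$ will do, and bipartiteness is automatic on that component).

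First I would plug this description of $b_v$ into condition (2). For an edge $e(v,w)$,
\[
a_v b_w + a_w b_v \equiv 2^q a_v a_w(n_v+n_w)\pmod{2^r},
\]
so condition (2) rewrites as $\val 2 {n_v+n_w}\geq r-q-\val 2{a_v a_w}$. Then I would invoke condition (3) to conclude $\val 2 {n_v+n_w}\geq 2$, i.e.\ $n_v+n_w\equiv 0\pmod 4$. Since $n_v,n_w$ are both odd, this forces $n_v\not\equiv n_w\pmod 4$: one is $\equiv 1$ and the other $\equiv 3$.

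The final step would be to define $\beta:V(\Gamma)\to\{\pm 1\}$ by $\beta(v)=+1$ if $n_v\equiv 1\pmod 4$ and $\beta(v)=-1$ if $n_v\equiv 3\pmod 4$. The computation above shows $\beta(v)\neq \beta(w)$ for every $e(v,w)\in E(\Gamma)$, so $\beta$ is a bipartitioning and $\Gamma$ is bipartite.

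The main obstacle is making sure that $n_v \bmod 4$ is actually well defined for vertices that appear in an edge; this is why we need the slack of $2$ in condition (3). If $v$ has a neighbour $w$, then $\val 2{a_v}\leq \val 2{a_v a_w}\leq r-q-2$, so $b_v\equiv 2^q n_v a_v\pmod{2^r}$ determines $n_v$ modulo $2^{r-q-\val 2{a_v}}\geq 4$, which is exactly the precision needed. The rest of the argument is formal bookkeeping with the restricted valuation property, and I would expect no further difficulty.
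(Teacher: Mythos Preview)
Your proposal is correct and follows essentially the same route as the paper: introduce odd integers $n_v$ with $b_v\equiv 2^q n_v a_v\pmod{2^r}$, substitute into condition (2) to obtain $\val 2{n_v+n_w}\geq 2$ via condition (3), and read off the bipartitioning from $n_v\bmod 4$. The paper simply fixes one choice of $n_v$ per vertex and does not discuss whether $n_v\bmod 4$ is independent of that choice; your closing paragraph on well-definedness is a nice extra check but is not logically required, since a single consistent choice already yields the bipartitioning.
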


\begin{lemma}
\label{lemma.two.orientation}
\begin{enumerate}
\item 
\label{orientation.two.conditions}
Let $(\Gamma,\vk)$ be a connected, $\ZZ/2^s$-reduced negative color scheme
.
If $(\Gamma,\vk)$ is oriented, then either $(\Gamma,\vk)$ is bipartite,
or $(\Gamma,\vk)$ is not bipartite, but  
$\Gamma'=\red {\ZZ/2^{s-1}}(\Gamma,\vk)$ is, and  $\val 2 {GCD(\vk)}=0$.
\item If $(\Gamma,\vk)$ is bipartite the divided fundamental class
, $\funddivZ{\Gamma,\vk}$ is an orientation of $(\Gamma,\vk)$.
\item If $\Gamma'$ is bipartite and
$\val 2 {GCD(\vk)}=0$, then each divided fundamental class
$\funddivZ{\Gamma,\vk}$ is an orientation of $(\Gamma,\vk)$.
\end{enumerate}
\begin{proof}
We first prove statement 1. 
The logic of the proof is slightly convoluted. We first prove that if $(\Gamma,\vk)$ is oriented
then $\Gamma'=\red {\ZZ/2^{s-1}}(\Gamma,\vk)$ is bipartite. After that we prove that
if $(\Gamma,\vk)$ is  oriented and $\val 2 {GCD(\vk)}>0$, then $\Gamma$ is bipartite, which completes the
proof of statement 1 of the lemma.

Assume that $(\Gamma,\vk)$ is oriented with orientation $u$.
By lemma~\ref{lemma:orientation} the cocycle $u$ has order $2^s$, so that
there will be at least one vertex $v_0$ such that $\val 2 {u_{v_0}}=0$. 
Arguing by connected induction and lemma~\ref{lemma:subfundamental} as 
in the proof of  lemma~\ref{lemma.pn.orientation}, we see that 
there is a constant $c\geq 0$ such that $\val 2 {k_v} = c +\val 2{u_v}$
for any $v\in V(\Gamma)$. 

We claim that $\Gamma'$ is bipartite. It suffices show that
$r=s, q=c, a_v=u_v ,b_v=k_v$  satisfies the conditions of lemma~\ref{little.argument}
applied to the graph $\Gamma'$.
The first two conditions are obviously satisfied. 
The condition we have to check is that if $e(v,w)\in E(\Gamma')$, then 
$\val 2{u_vu_w}\leq s-c-2$. But since $\Gamma'$ is $2^{s-1}$-reduced
by definition, we have that 
$\val 2 {k_vk_w}\leq s-2$.
Therefore
\[
\val 2{u_vu_w}= \val 2 {k_vk_w}-2c \leq s-2-2c\leq s-c-2. 
\]
 
Next assume  that  in addition to that $(\Gamma,\vk)$ is oriented 
we have that
$\val 2 {GCD}>0$. We need to prove that $\Gamma$  is bipartite.
That $\val 2 {GCD}>0$ means that each $k_v$ is even. Since there is a $v_0$ with
$\val 2{u_{v_0}}=0$, we conclude that 
$c\geq 1$. Let $\bar k_v =k_v/2 \mod 2^{s-1}$. 
We claim that lemma~(\ref{little.argument}) applies to $\Gamma$ with
$r=s-1,q=c-1,a_v=u_v,b_v=\bar k_v$. 
The three conditions of lemma~(\ref{little.argument}) translate into
\begin{enumerate}
\item $\val 2{\bar k_v}=c-1 +\val 2 {u_v}$ with $c-1\geq 0$.
\item $u_v\bar k_w+u_w\bar k_v=0\mod 2^{s-1}$.
\item $\val 2 {u_vu_w} \leq s-c-2$.
\end{enumerate}
The first two conditions follow immediately from the assumptions.
We have to check the third. But $\val 2 {k_vk_w}\leq s-1$ so
\[
\val 2{u_vu_w} = \val 2{k_vk_w}-2c \leq s-2c-1 \leq s-c-2.
\]
This finishes the proof of statement (\ref{orientation.two.conditions})
of the lemma.

Statement 2 is a part of lemma~\ref{lemma.pn.orientation}.

We finally prove statement 3.
Assume that $\Gamma'$ is bipartite and $\val 2{GCD(\vk)}=0$. 
Choose a bipartitioning $\alpha$ of $\Gamma'$ with corresponding 
fundamental class $\funddivZ{\Gamma',\vk}$. 
Using lemma~\ref{lemma:orientation} again, in order to prove that the fundamental class is an orientation, we
need to show that if $z\in C^0(\Gamma,\vk)$ is a cocycle, there is 
an integer $n$ such that $z-n\funddivZ{\Gamma',\vk}$ is divisible by 2.
 
Let $v_0\in V(\Gamma)$ be such that $k_{v_0}$ is odd. We make the following two claims,
which constitute a 2--primary version of the principle of small cycles.
If $z_{v_0}$ is odd, then $\val 2 {k_v} =\val 2 {z_v}$ for all $v$.
If $z_{v_0}$ is even, then $\val 2 {k_v}< \val 2 {z_v}$ for all $v$. 

We prove the claims by connected induction. The assumption on $v_0$  
provides the induction start for each of the two statements.

Let  $e(v,w)\in E(\Gamma)$. 
The induction step for the first statement is that 
if $\val 2{k_v}=\val 2{z_v}$ then  $\val 2{k_w}=\val 2{z_w}$.
To prove this, we note that 
$\val 2{z_vk_w} = \val 2{z_v}+\val 2{k_w}=\val 2{k_v}+\val 2{k_w}\leq s-1$, since
$\Gamma$ is $\ZZ/2^s$-reduced. It follows from the restricted valuation property that
\[
\val 2 {k_v}+\val 2 {z_w}=\val 2 {k_vz_w}=\val 2 {k_w}+\val 2 {z_v}.
\]
This completes the proof of the induction step for the first statement.

The induction step for the second statement is that 
if $\val 2{k_v}<\val 2{z_v}$ then  $\val 2{k_w}<\val 2{z_w}$.
There are two possibilities. If 
$\val 2{z_vk_w} \leq s-1$ we can complete the proof of the induction statement
by  arguing as in the first case. If $\val 2{z_wk_v} \geq s$, we have that
$\val 2{k_v} \geq s-\val 2 {z_w}$, and 
\[
\val 2 {k_w}\leq s-1-\val 2 {k_v} \leq s-1-(s-\val 2 {z_w})=\val 2{z_w}-1 <\val 2{z_w}.  
\]
This completes the proof of our claims.

The two claims finish our proof of statement 3, since
in the first case, $z_v - \funddivZ{\Gamma',\vk}_v$ is always even, and in the second case $z_v$ is always even.
\end{proof}
\end{lemma}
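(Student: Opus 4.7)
The plan is to reduce each of the three statements to the criterion of lemma~\ref{lemma:orientation} together with the little ``bipartiteness detector'' of lemma~\ref{little.argument}. Statement (2) is essentially free: lemma~\ref{lemma.pn.orientation} was proved for every prime, so the bipartite case at $p=2$ already supplies an orientation via the divided fundamental class without any additional work.

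For statement (1), I would begin with an orientation $u\in C^0(\Gamma,\vk;\ZZ/2^s)$. By lemma~\ref{lemma:orientation}, $u$ has order exactly $2^s$, so some vertex $v_0$ has $\val 2{u_{v_0}}=0\leq \val 2{k_{v_0}}$. The principle of small cycles (lemma~\ref{lemma:subfundamental}) then hands me a constant $c\geq 0$ with $\val 2{k_v}-\val 2{u_v}=c$ on every vertex. Feeding $(r,q,a_v,b_v)=(s,c,u_v,k_v)$ into lemma~\ref{little.argument} applied to $\Gamma'$ should show $\Gamma'$ is bipartite, since $\Gamma'$ being $\ZZ/2^{s-1}$-reduced provides exactly the valuation slack needed for condition (3). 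For the second half, if in addition $\val 2{GCD(\vk)}>0$ then every $k_v$ is even and so $c\geq 1$; setting $\bar k_v=k_v/2$ and applying lemma~\ref{little.argument} to $\Gamma$ itself with $(r,q)=(s-1,c-1)$ should then force $\Gamma$ to be bipartite.

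For statement (3), assume $\Gamma'$ is bipartite with chosen bipartitioning $\alpha$ and $\val 2{GCD(\vk)}=0$. Picking a vertex $v_0$ with $k_{v_0}$ odd, I would establish by connected induction a 2--primary analogue of the principle of small cycles: for any cocycle $z$, either $z_{v_0}$ is odd and then $\val 2{k_v}=\val 2{z_v}$ at every vertex, or $z_{v_0}$ is even and then $\val 2{k_v}<\val 2{z_v}$ at every vertex. Once this dichotomy is in hand, one chooses $n$ to match the leading coefficient of $z$ against that of $\funddivZ{\Gamma',\vk}$ in the first case, or $n=0$ in the second, making $z-n\funddivZ{\Gamma',\vk}$ divisible by $2$ and then invoking lemma~\ref{lemma:orientation}.

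The main obstacle is the delicate arithmetic of $\val 2$ on $\ZZ/2^s$: the restricted valuation property works only so long as the relevant products do not vanish, and the second case of the 2--primary principle of small cycles needs a separate argument when $\val 2{z_wk_v}\geq s$, where I would instead exploit the crude bound $\val 2{k_w}\leq s-1-\val 2{k_v}$ coming from the $\ZZ/2^s$--reduced hypothesis. The twofold application of lemma~\ref{little.argument} in statement (1) — once to $\Gamma'$ with parameter $c$ and once to $\Gamma$ with parameter $c-1$ — reflects exactly the same need to keep products safely below the modulus.
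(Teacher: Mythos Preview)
Your proposal is correct and follows essentially the same route as the paper's proof: the same two applications of lemma~\ref{little.argument} with the same parameters $(s,c,u_v,k_v)$ on $\Gamma'$ and $(s-1,c-1,u_v,\bar k_v)$ on $\Gamma$ for statement~1, the appeal to lemma~\ref{lemma.pn.orientation} for statement~2, and the same $2$--primary dichotomy proved by connected induction (including the separate treatment of the case $\val 2{z_wk_v}\geq s$ via the bound from $\ZZ/2^s$--reducedness) for statement~3. The only cosmetic difference is that in the first branch of statement~3 the paper simply takes $n=1$, since $\val 2{k_v}=\val 2{z_v}$ already forces $z_v$ and $\funddivZ{\Gamma',\vk}_v$ to have equal parity at every vertex; your ``match the leading coefficient'' works too but is not needed.
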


\subsection{Exact sequences}

There are some exact sequences around which probably deserve closer attention than we give them in this paper. We will use them in a few special cases. 
Here is a first obvious observation.

\begin{lemma}
\label{first.properties}
  If $\Gamma=\Gamma_1\cup \Gamma_2$ is a disjoint union of two graphs, for any $\vk$ 
there is a natural isomorphism 
$H^*(\Gamma,\vk)\cong H^*(\Gamma_1,\vk)\oplus H^*(\Gamma_2,\vk)$.
\end{lemma}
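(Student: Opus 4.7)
The plan is to observe that the chain complex $C^*(\Gamma,\vk)$ itself splits as an internal direct sum of the subcomplexes $C^*(\Gamma_i,\vk)$, so the conclusion follows by taking cohomology of a direct sum of complexes. I would carry this out in three short steps.

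First, I would check the splitting at the level of chain groups. Because $\Gamma_1$ and $\Gamma_2$ are disjoint, we have $V(\Gamma)=V(\Gamma_1)\sqcup V(\Gamma_2)$ and $E(\Gamma)=E(\Gamma_1)\sqcup E(\Gamma_2)$. Taking free abelian groups gives canonical isomorphisms
\[
C^0(\Gamma,\vk)=\ZZ[V(\Gamma)]\cong C^0(\Gamma_1,\vk)\oplus C^0(\Gamma_2,\vk),
\]
\[
C^1(\Gamma,\vk)=\ZZ[E(\Gamma)]\cong C^1(\Gamma_1,\vk)\oplus C^1(\Gamma_2,\vk),
\]
where on each $\Gamma_i$ we use the restricted color scheme $i^*(\vk)$ (suppressed in the notation, as agreed in the introduction).

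Second, I would verify that the differential respects this splitting. For $v\in V(\Gamma_i)$, the defining formula $d^0(v)=\sum_w k_w e(v,w)$ runs over vertices $w$ joined to $v$ by an edge of $\Gamma$. Since $\Gamma_1$ and $\Gamma_2$ share no edges, every such $w$ lies in $V(\Gamma_i)$ and the corresponding $e(v,w)$ lies in $E(\Gamma_i)$. Hence $d^0$ restricts to the differential $d^0_i$ of $C^*(\Gamma_i,\vk)$ on each summand, and $d^0=d^0_1\oplus d^0_2$.

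Third, cohomology commutes with finite direct sums of cochain complexes, so
\[
H^*(\Gamma,\vk)\cong H^*(\Gamma_1,\vk)\oplus H^*(\Gamma_2,\vk),
\]
and naturality in $(\Gamma,\vk)$ is built in, since the splitting came from a canonical bijection of vertex and edge sets. There is really no obstacle here; the content is purely bookkeeping, and the only point to keep in mind is that disjoint union means no edge has one endpoint in $V(\Gamma_1)$ and the other in $V(\Gamma_2)$, which is exactly what makes the differential diagonalise.
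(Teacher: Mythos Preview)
Your proof is correct and is exactly the obvious chain-level splitting the paper has in mind; in fact the paper states this lemma as ``a first obvious observation'' and gives no proof at all. Your three-step argument is the standard and only reasonable way to justify it.
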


As usual, we have various long exact sequences of cohomology. 

Suppose that $\Gamma$ has two subgraphs   $i_1:\Gamma_1\subset\Gamma$ and $i_2:\Gamma_2\subset \Gamma$. If
$V(\Gamma) = V(\Gamma_1) \cup V(\Gamma_1)$ and $E(\Gamma) = E(\Gamma_2) \cup E(\Gamma_2)$,  
we write that $\Gamma = \Gamma_1\cup \Gamma_2$. There is an intersection graph
$\Gamma_1\cap \Gamma_2$ defined by that 
$E(\Gamma_1\cap \Gamma_2)=E(\Gamma_1)\cap E(\Gamma_2)$
and $V(\Gamma_1\cap \Gamma_2)=V(\Gamma_1)\cap V(\Gamma_2)$, and we have inclusions of subgraphs
$j_1:\Gamma_1 \cap \Gamma_2 \subset \Gamma_1$ and  $j_2:\Gamma_1 \cap \Gamma_2 \subset \Gamma_2$.

\begin{lemma}
The canonical map $H^*(\Gamma,\Gamma_1,\vk) \to H^*(\Gamma_2,\Gamma_1\cap \Gamma_2,\vk)$ is an isomorphism.
\begin{proof}
  Already the map of chain complexes $C^*(\Gamma,\Gamma_1,\vk)\to C^*(\Gamma_2,\Gamma_1\cap \Gamma_2,\vk)$ 
is an isomorphism.
\end{proof}
  \end{lemma}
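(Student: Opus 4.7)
The plan is to show that the canonical map is already an isomorphism at the level of cochain complexes, and then the statement on cohomology follows.

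First I would unpack the definition of the relative cochain complex. By the definition of relative graph cohomology given earlier in the paper, $C^*(\Gamma,\Gamma_1,\vk)$ is the kernel of the restriction map $i_1^*\colon C^*(\Gamma,\vk)\to C^*(\Gamma_1,\vk)$. Since the restriction is just the surjection induced by the inclusions of index sets $V(\Gamma_1)\subset V(\Gamma)$ and $E(\Gamma_1)\subset E(\Gamma)$, its kernel has a canonical free basis: $C^0(\Gamma,\Gamma_1,\vk)=\ZZ[V(\Gamma)\setminus V(\Gamma_1)]$ and $C^1(\Gamma,\Gamma_1,\vk)=\ZZ[E(\Gamma)\setminus E(\Gamma_1)]$. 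Analogously, $C^*(\Gamma_2,\Gamma_1\cap\Gamma_2,\vk)$ is the kernel of $j_2^*$ and has basis $V(\Gamma_2)\setminus V(\Gamma_1\cap\Gamma_2)$ in degree $0$ and $E(\Gamma_2)\setminus E(\Gamma_1\cap\Gamma_2)$ in degree $1$.

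Next I would compare the index sets. Using $V(\Gamma)=V(\Gamma_1)\cup V(\Gamma_2)$, a vertex lies in $V(\Gamma)\setminus V(\Gamma_1)$ iff it lies in $V(\Gamma_2)$ but not in $V(\Gamma_1)$, i.e.\ in $V(\Gamma_2)\setminus V(\Gamma_1\cap\Gamma_2)$. The analogous equality $E(\Gamma)\setminus E(\Gamma_1)=E(\Gamma_2)\setminus E(\Gamma_1\cap\Gamma_2)$ follows from $E(\Gamma)=E(\Gamma_1)\cup E(\Gamma_2)$. Hence the canonical map sending a basis vertex (resp.\ edge) of $C^*(\Gamma,\Gamma_1,\vk)$ to the same vertex (resp.\ edge) in $C^*(\Gamma_2,\Gamma_1\cap\Gamma_2,\vk)$ is a bijection of bases, so an isomorphism of the underlying graded abelian groups.

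Finally I would check compatibility with the differentials. Both differentials act on a vertex $v\not\in V(\Gamma_1)$ by the same formula $d^0(v)=\sum_w k_w e(v,w)$, with the sum running over edges of the ambient graph incident to $v$. An edge $e(v,w)$ occurring in this sum either lies in $E(\Gamma_1)$, in which case it is killed when passing to the kernel in both complexes, or it does not, in which case it appears with the same coefficient in both. Under the identification of index sets above, the two formulas match termwise, so the map is a chain isomorphism. Passing to cohomology yields the claim.

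The only place one could slip up is in the edge/vertex bookkeeping, so I would be careful to treat edges of $\Gamma$ that are incident to vertices of $V(\Gamma)\setminus V(\Gamma_1)$ but whose other endpoint lies in $V(\Gamma_1)$; these are precisely the boundary edges seen by $\fundbd{\cdot}$ earlier, and the point is that they are treated identically in both quotient constructions because they appear in $E(\Gamma_2)\setminus E(\Gamma_1)$ as well.
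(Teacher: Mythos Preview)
Your proof is correct and follows exactly the same approach as the paper: you show that the canonical map is already an isomorphism of chain complexes, which is all the paper asserts. Your elaboration of the index-set bookkeeping and the differential check is sound; note only that the case ``$e(v,w)\in E(\Gamma_1)$'' in your penultimate paragraph is vacuous, since no edge incident to $v\notin V(\Gamma_1)$ can lie in $E(\Gamma_1)$, which is precisely why the kernel is a subcomplex and why the two differentials agree on the nose.
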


Using this lemma, one can in the usual way construct a long exact sequences of pairs of graphs, and Mayer-Vietoris sequences.
We will use this in \ref{NOcase} to compare two graphs which only differ by an edge $e$. That is, $V(\Gamma_1)=V(\Gamma_2)$ and 
$E(\Gamma_1)=E(\Gamma_2)\cup \{ e\}$. In this case, the relative chain complex $C^*(\Gamma_1,\Gamma_2,\vk)$ is just 
a copy of $\ZZ$ generated by $[e]$. We get an exact sequence of cohomology groups
\begin{equation}
  \label{eq:LES}
  0 \to H^0(\Gamma_1,\vk) \to H^0(\Gamma_2,\vk) \to \ZZ \to H^1(\Gamma_1,\vk)\to H^1(\Gamma_2,\vk) \to 0
\end{equation}

\subsection{The not orientable case.}
\label{NOcase}
We now turn to the case when $(\Gamma,\vk)$ is $\ZZ/p^s$--reduced but not $\ZZ/p^s$--orientable. We want to show that
the critical cohomology $\CH(\Gamma,\vk;\ZZ/p^s)$ is trivial.

Consider a  $\ZZ/p^s$-reduced graph $(\Gamma,\vk)$. Let $\Delta\subset \Gamma$
be a subgraph, obtained from $(\Gamma,\vk)$ by removing a single edge $e(v,w)$. There is a restriction map
$j^*:C(\Gamma,\vk) \to C(\Delta,\vk)$. The relative cochain complex $C(\Gamma,\Delta,\vk)$ is trivial in dimension 0, and 
generated by $[e(v,w)]$ in dimension 1. The restriction map induces a map of critical cohomology
$j^*:\CH(\Gamma,\vk;\ZZ/p^s)\to \CH(\Delta,\vk;\ZZ/p^s)$. 
\begin{lemma}
\label{one.more.edge}
$j^*$ is injective.   If $(\Gamma,\vk)$ and $(\Delta,\vk)$ are both connected, $\ZZ/p^s$--reduced and $\ZZ/p^s$-orientable, $j^*:\CH(\Gamma,\vk;\ZZ/p^s) \to \CH(\Delta,\vk;\ZZ/p^s)$ is an isomorphism.

If $(\Gamma,\vk)$ is connected but not $\ZZ/p^s$--orientable, the group $\CH(\Gamma,\vk)$  is trivial.
\begin{proof}
  The injectivity of $j^*$  follows from diagram chase in the following commutative diagram
with exact columns and rows. The exactness of two first rows comes from the long exact sequences belonging to the short exact sequence of coefficients $0\to \ZZ/p^{s-1}\to \ZZ/p^s\to \ZZ/p\to 0$.
\[
\begin{CD}
@. 0 @. 0 @.\\
@. @VVV @VVV \\
0 @>>>H^0(\Gamma;\ZZ/p^{s-1}) @>>> H^0(\Gamma ;\ZZ/p^s) @>>> \CH(\Gamma;\ZZ/p^s) @>>> 0\\ 
@.@VVV @VVV @V{j^*}VV \\
0 @>>>H^0(\Delta ;\ZZ/p^{s-1}) @>>> H^0(\Delta ;\ZZ/p^s) @>>> \CH(\Delta;\ZZ/p^s) @>>> 0\\
@. @VVV @VVV  \\
0 @>>>  H^1(\Gamma,\Delta ;\ZZ/p^{s-1}) @>>> H^1(\Gamma,\Delta ;\ZZ/p^s) \\
\end{CD}
\]
If $\Gamma$ and $\Delta$ are both $\ZZ/p^s$-orientable, $j^*:\CH(\Gamma;\ZZ/p^s)\to\CH(\Delta;\ZZ/p^s)$ is an injective map between 1-dimensional vector spaces, so it's an isomorphism. If $\Delta$ is $\ZZ/p^s$-orientable but $\Gamma$ is not, $j^*$ identifies $\CH(\Gamma;\ZZ/p^s)$ with a subgroup of $\CH(\Delta,\ZZ/p^s)$. Since we are assuming that $\Gamma$ is not $\ZZ/p^s$-orientable, this subgroup cannot be $\ZZ/p$. It follows that it has to be trivial. 


Now let $(\Gamma,\vk)$ be a connected negative color scheme.
  Let $\Delta \subset \Gamma$ be a maximal tree. Since $\Delta$ is bipartite, it is orientable. Find a chain of graphs
$\Delta=\Delta_0 \subset \Delta_1 \subset \dots \subset \Delta_m=\Gamma$ such that
$V(\Delta_i)=V(\Gamma)$  for every $i$ and $E(\Delta_{i+1})\setminus E(\Delta_i)$ has exactly one element.
For some $i$, $\Delta_i$ will be $\ZZ/p^s$ orientable but $\Delta_{i+1}$ is not $\ZZ/p^s$ orientable. 
Using lemma~\ref{one.more.edge} applied to $\Delta_i$, we see that $\CH(\Delta_{i+1};\ZZ/p^s)$ is a trivial group. Using inductively that $j^*$ is injective, it follows that $\CH(\Delta_k;\ZZ/p^s)=0$ for $k\geq i+1$. In particular,
$\CH(\Delta;\ZZ/p^s)$ is trivial.
\end{proof}  
\end{lemma}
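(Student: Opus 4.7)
The plan is to use the commutative diagram obtained by combining the long exact sequence of coefficients $0\to\ZZ/p^{s-1}\to\ZZ/p^s\to\ZZ/p\to 0$ with the long exact sequence of the pair $(\Gamma,\Delta)$. Since $C^*(\Gamma,\Delta,\vk)$ vanishes in degree $0$ and is a single copy of $R$ on the generator $[e]$ in degree $1$, we immediately obtain (i) that $j^*\colon H^0(\Gamma;R)\to H^0(\Delta;R)$ is injective for any coefficients $R$, and (ii) that $H^1(\Gamma,\Delta;R)\cong R$, so that the coefficient map $i_*\colon H^1(\Gamma,\Delta;\ZZ/p^{s-1})\to H^1(\Gamma,\Delta;\ZZ/p^s)$ is multiplication by $p$ on the generator $[e]$, hence injective.

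For the injectivity of $j^*\colon\CH(\Gamma;\ZZ/p^s)\to\CH(\Delta;\ZZ/p^s)$ I would do the following diagram chase. Take a cocycle $x\in H^0(\Gamma;\ZZ/p^s)$ representing a class whose image in $\CH(\Delta;\ZZ/p^s)$ is zero, so that $j^*(x)=i_*(y)$ for some $y\in H^0(\Delta;\ZZ/p^{s-1})$. Letting $\delta$ be the connecting map to $H^1(\Gamma,\Delta;-)$ and using naturality in the coefficients, $i_*\delta(y)=\delta(i_*(y))=\delta(j^*(x))=0$. Point (ii) above gives $\delta(y)=0$, so exactness of the pair sequence produces $w\in H^0(\Gamma;\ZZ/p^{s-1})$ with $j^*(w)=y$. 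Then $j^*(x-i_*(w))=j^*(x)-i_*(y)=0$, and the injectivity from point (i) forces $x=i_*(w)$. Hence $[x]=0$ in $\CH(\Gamma;\ZZ/p^s)$.

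The isomorphism statement is then immediate: under the hypotheses, both $\CH(\Gamma;\ZZ/p^s)$ and $\CH(\Delta;\ZZ/p^s)$ are one-dimensional $\ZZ/p$-vector spaces by the definition of $\ZZ/p^s$-orientability, so an injective linear map between them is an isomorphism.

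For the non-orientable case I would argue by a standard ``spanning tree and one edge at a time'' reduction. Pick a spanning tree $T\subset\Gamma$; since $T$ is bipartite, lemma~\ref{lemma.pn.orientation} (for odd $p$) or lemma~\ref{lemma.two.orientation} (for $p=2$) shows that $T$ is $\ZZ/p^s$-oriented, and any subgraph of a $\ZZ/p^s$-reduced graph is itself $\ZZ/p^s$-reduced. Now build a chain $T=\Delta_0\subset\Delta_1\subset\cdots\subset\Delta_m=\Gamma$ in which each step adds a single edge, and let $i$ be the largest index for which $\Delta_i$ is $\ZZ/p^s$-oriented; by assumption on $\Gamma$, such an $i$ exists with $i<m$. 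Then $\CH(\Delta_i;\ZZ/p^s)\cong\ZZ/p$, and the injectivity just proved shows $\CH(\Delta_{i+1};\ZZ/p^s)$ embeds in $\ZZ/p$. If this subgroup were all of $\ZZ/p$, then by definition $\Delta_{i+1}$ would be $\ZZ/p^s$-oriented, contradicting the maximality of $i$; so $\CH(\Delta_{i+1};\ZZ/p^s)=0$. Iterating the injectivity gives $\CH(\Delta_k;\ZZ/p^s)=0$ for every $k\geq i+1$, and in particular for $\Gamma=\Delta_m$. The main obstacle is cleanly setting up the diagram chase of the second paragraph; the rest is bookkeeping, the inductive construction, and an appeal to the definition of orientability.
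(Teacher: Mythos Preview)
Your proof is correct and follows essentially the same approach as the paper: the same commutative diagram combining the coefficient sequence with the pair sequence, the same injective-between-one-dimensional-spaces argument for the isomorphism, and the same spanning-tree-plus-one-edge-at-a-time induction for the non-orientable case. The only difference is that you spell out the diagram chase explicitly and cite the relevant orientation lemmas for trees, whereas the paper simply presents the diagram and leaves the chase to the reader.
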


\section{Graphs that are not $\ZZ/p^s$-reduced}
\label{sec:not.reduced}
In this section we will study $H^0(\Gamma,\vk;\ZZ/p^s)$ in the case that $(\Gamma,\vk)$
is not $\ZZ/p^s$--reduced. We will attempt to generate as many cocycles as possible
as images of fundamental classes of subgraphs $\Delta$. If $f:\Delta\subset \Gamma$ is
the inclusion of
an $\ZZ/p^s$--oriented, $\ZZ/p^s$--reduced subgraph, we will see that the image of its fundamental class
$f_*(\funddivZ{\Delta})$
is a cycle in $C^0(\Gamma,\vk;\ZZ/p^s)$, that is, it is an element of $H^0(\Gamma,\vk;\ZZ/p^s)$. But this is not all we can do. A more general situation is that
$i:\Delta\subset \Gamma$ is a $\ZZ/p^r$--oriented, $\ZZ/p^r$--reduced subgraph for some
$r<s$. In this case, $f_*(\funddivZ{\Delta})$ is still a chain in
$C^0(\Gamma,\ZZ/p^s)$, but not necessarily a cycle. Instead, 
$p^{s-r}f_*(\funddivZ{\Delta})$ is a cycle in $C^0(\Gamma,\ZZ/p^s)$.
Our main result in this section is theorem \ref{main.lemma.generation} which states that there
are enough classes of this type to generate $H^0(\Gamma,\vk;\ZZ/p^s)$. The proof is by
an induction over $s$. The basic idea of the induction is that if
you can generate $H^0(\Gamma,\vk;\ZZ/p^{s-1})$ by classes as those mentioned above, one
can also generate the image of $i_*:H^0(\Gamma,\vk;\ZZ/p^{s-1}) \to H^0(\Gamma,\vk;\ZZ/p^{s})$
in the same way. To complete the proof, it suffices to prove that we can generate all elements of the
cokernel $\CH(\Gamma,\vk;\ZZ/p^s)$ of $i_*$. To prove this, we have to look very closely at the divisibility
properties of the coefficient of a cycle $z=\sum z_vv$. 

Let $\gamma_{p^s}:\CH(\Gamma,\vk;\ZZ/p^s)\to H^0(\Gamma,\vk;\ZZ/p)$ be the injective
inclusion coming from the long exact sequence induced  by the coefficient sequence
of $0 \to \ZZ/p^{s-1}\to \ZZ/p^{s} \to \ZZ/p\to 0$. This map allows us to identify
$\CH(\Gamma,\vk;\ZZ/p^s)$ with a subgroup of $H^0(\Gamma,\vk;\ZZ/p)$.

There is a reduction map $\CH(\Gamma,\vk;\ZZ/p^s)\to \CH(\Gamma,\vk;\ZZ/p^{s-1})$.  
The obvious map of long exact sequences shows that we have inclusions   
$H^0(\Gamma,\vk;\ZZ/p)=\im(\gamma_p)\supset \im(\gamma_p^2) \supset \dots$. 
Since $H^0(\Gamma,\vk,\ZZ/p)$ is a finite group, this sequence stabilizes after a finite number of steps. 
Precisely, the universal coefficient theorem shows that it stabilizes to $\left(H^0(\Gamma,\vk)/\mathrm{torsion}\right)\otimes \ZZ/p$.
It follows from this and from corollary \ref{rank} that
if $\Gamma$ is connected and bipartite, it
stabilizes to $\ZZ/p$, and that if $\Gamma$ is connected and not bipartite, it stabilizes to 0.

We want to use the divided fundamental classes of oriented subgraphs to construct elements of $\CH(\Gamma,\vk;\ZZ/p^s)$. 
We first define the group of cycles in $C^0(\Gamma,\vk;\ZZ/p^{s})$ which can be obtained
from fundamental classes of subgraphs and introduce some notation which will also be useful in later sections.
Let $f:\Delta\subset \Gamma$ be the inclusion of a component of $\red {p^r}\Gamma$. Assume that $(\Delta,\vk)$ is
$\ZZ/p^{s-d}$--oriented, with orientation class $\funddivZ \Delta$.
\begin{defn}
  \begin{align*}
m(\Delta)&=\inf_{v\in V(\Delta)} \val p{k_v} \\    
r(\Delta)&=\inf_{e(v,w)\in B(\Delta)} \val p{k_vk_w},
  \end{align*}
where $B(\Delta)=\{e(v,w)\in E(\Gamma)\mid  v\in V(\Delta)\}\setminus E(\Delta)$ is the 
edge boundary of $\Delta$.
\end{defn}
We see that $r(\Delta)\leq r$ since $\Delta$ is a component of
$\red{r(\Delta)}\Gamma$. Moreover, $r(\Delta)=\infty$ if and only if $B(\Delta)=\emptyset$.
If $\Gamma$ is connected, this happens if and only if $\Delta=\Gamma$.

We consider the fundamental class $\funddivZ \Delta$ as a cochain in
$C^0(\Gamma,\vk;\ZZ/p^{s})$. It's boundary  will be
\[
d(\funddivZ \Delta)=\sum_{e(v,w)\in E(\Delta)}p^{-m(\Delta)}k_vk_w + \sum_{e(v,w)\in B(\Delta)}p^{-m(\Delta)}k_vk_w
\]
Now
$\val p{(p^{-m(\Delta)}k_vk_w)}\geq r(\Delta)-m(\Delta)\geq r-m(\Delta)$ for $e(v,w)\in B(\Delta)$. We infer that
if $r(\Delta)-m(\Delta)+d\geq s$, then $p^d\funddivZ \Delta$ defines
a cocycle in $C^0(\Gamma,\vk;\ZZ/p^{s})$. 
This class maps to
 an element of the critical cohomology $\CH(\Gamma,\vk;\ZZ/p^s)$, which we will also denote by
$\funddivZ \Delta$. 

\begin{defn}
  The group $D_s^d(\Gamma,\vk)$ is the subgroup of $C^0(\Gamma,\vk;\ZZ/p^s)$ generated by the classes $\funddivZ\Delta$ where $r(\Delta)-m(\Delta)\geq s-d$ and  $\Delta$ is an $\ZZ/p^{s-d}$--oriented component of $\red{r(\Delta)}(\Gamma,\vk)$. 
\end{defn}

By the argument above, we have inclusions $p^dD^d_s \subset H^0(\Gamma,\vk;\ZZ/p^s)$.
The following diagram commutes (where $p^{d-1}D^{d-1}_{s-1}\to p^{d}D^{d}_{s}$ is given by $ \funddivZ \Delta\mapsto p\funddivZ  \Delta$):
\begin{equation}
  \label{eq:DandC}
\begin{tikzcd}
    p^{d-1}D^{d-1}_{s-1}\arrow{r}\arrow[d,hook]&p^{d}D^{d}_{s}\arrow[d,hook]\\  
   H^0(\Gamma,\vk;\ZZ/p^ {s-1})\arrow[r,"i_*"]&H^0(\Gamma,\vk;\ZZ/p^s)\arrow[r]&\CH(\Gamma,\vk;\ZZ/p^s)\arrow[r]&0  
\end{tikzcd}
\end{equation}

Here is an easy motivational result:
\begin{lemma}
\label{fundamental.classes.critical.homology}
  Assume that $\Gamma$ is $\ZZ/p^r$-reduced for $r\leq s$. The group $\CH(\Gamma,\vk;\ZZ/p^s)$ is generated by the classes $\funddivZ \Delta$ where
$r(\Delta)-m(\Delta)\geq s$ and  $\Delta$ is an $\ZZ/p^{s}$-oriented component of $\red{r(\Delta)}(\Gamma,\vk)$.
\begin{proof}
If $\Gamma$ is $\ZZ/p^r$-reduced for $r\leq s$, then $\Gamma$ is $s$-reduced. So we can as well assume that $r=s$. 
Let $\Delta_i$ be the connected components of $\Gamma$. Then 
$\CH(\Gamma,\vk;\ZZ/p^s)\cong \oplus_i \CH(\Delta_i,\vk;\ZZ/p^s)$. According to 
lemma \ref{one.more.edge} $\CH(\Delta_i,\vk;\ZZ/p^s)$ vanishes if $\Delta_i$ is not
$\ZZ/p^{s}$-oriented. 
Moreover, if  $\Delta_i$ is $\ZZ/p^{s}$-oriented, then
by lemma~\ref{lemma.pn.orientation} respectively lemma~\ref{lemma.two.orientation}
$\CH(\Gamma_i,\vk;\ZZ/p^s)$ is generated by $\funddivZ{\Gamma_i}$.
\end{proof}
\end{lemma}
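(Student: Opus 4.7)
The plan is to reduce to the case when $\Gamma$ itself coincides with $\red s(\Gamma,\vk)$ and then analyze the critical cohomology component by component, using the two orientation lemmas together with lemma~\ref{one.more.edge} as the main inputs.

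First I would observe that if $\Gamma$ is $\ZZ/p^r$-reduced with $r\leq s$, then it is automatically $\ZZ/p^s$-reduced, so we may as well assume $r=s$. Under this assumption, $\Gamma=\red{s}(\Gamma,\vk)$, and for any connected component $\Delta$ of $\Gamma$ the edge boundary $B(\Delta)\subset E(\Gamma)$ is empty. Consequently $r(\Delta)=\infty$, the divisibility condition $r(\Delta)-m(\Delta)\geq s$ is trivially satisfied, and the fundamental chain $\funddivZ{\Delta}$ is already a genuine cocycle in $C^0(\Gamma,\vk;\ZZ/p^s)$ (no prefactor of $p$ needed). It also means that $\Delta$ is a component of $\red{r(\Delta)}(\Gamma,\vk)$ in the trivial sense, so the hypotheses of the lemma match exactly the decomposition into components of $\Gamma$.

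Next I would invoke lemma~\ref{first.properties} to split both the chain complex and the cohomology over the connected components: if $\Gamma=\bigcup_i \Delta_i$, then $\CH(\Gamma,\vk;\ZZ/p^s)\cong\bigoplus_i \CH(\Delta_i,\vk;\ZZ/p^s)$. This reduces the claim to the connected case. So let $\Delta$ be a connected component of $\Gamma$. There are two alternatives. If $\Delta$ is not $\ZZ/p^s$-oriented, then by the final assertion of lemma~\ref{one.more.edge} we have $\CH(\Delta,\vk;\ZZ/p^s)=0$, and the component contributes nothing to the claimed set of generators either, so the statement holds vacuously for that component.

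If on the other hand $\Delta$ is $\ZZ/p^s$-oriented, then by definition $\CH(\Delta,\vk;\ZZ/p^s)\cong\ZZ/p$, so it is a one-dimensional $\ZZ/p$-vector space, and it suffices to exhibit one nonzero generator among the $\funddivZ{\Delta}$. I would produce this generator using the orientation lemmas: for odd $p$, lemma~\ref{lemma.pn.orientation} identifies $\funddivZ{\Delta}$ (built from a bipartitioning of $\Delta$) as a $\ZZ/p^s$-orientation class, and for $p=2$ the corresponding statements in lemma~\ref{lemma.two.orientation} (cases~2 and~3, covering the bipartite and the non-bipartite but $\red{2^{s-1}}$-bipartite situations) provide a divided fundamental class which is a $\ZZ/2^s$-orientation; by the very definition of the orientation class, its image in the cokernel $\CH(\Delta,\vk;\ZZ/p^s)$ is a generator. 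Assembling these component-wise generators yields the claim for all of $\Gamma$. The whole argument is essentially a bookkeeping exercise once the orientation lemmas are in hand; no step seems to present any real obstacle, the only mild care being to notice that the conditions $r(\Delta)-m(\Delta)\geq s$ and ``$\Delta$ a component of $\red{r(\Delta)}\Gamma$'' degenerate into triviality when we work with components of the already reduced $\Gamma$.
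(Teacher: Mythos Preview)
Your proposal is correct and follows essentially the same route as the paper's own proof: reduce to $r=s$, split into connected components, kill the non-oriented components via lemma~\ref{one.more.edge}, and use lemma~\ref{lemma.pn.orientation} respectively lemma~\ref{lemma.two.orientation} to see that $\funddivZ{\Delta}$ generates the critical cohomology of each oriented component. Your additional remark that $r(\Delta)=\infty$ for a full component, so that the hypotheses on $\Delta$ in the lemma statement are automatically met, is a helpful clarification that the paper leaves implicit.
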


\begin{corollary}
    Assume that $\Gamma$ is $\ZZ/p^r$-reduced for $r\leq s$. The group $H^0(\Gamma,\vk;\ZZ/p^s)$ is generated by the classes
$p^dD^d_s$.
\begin{proof}
  Induction over $r$, using lemma~\ref{fundamental.classes.critical.homology} together with 
diagram~\ref{eq:DandC}.
\end{proof}
\end{corollary}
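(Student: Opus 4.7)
The plan is to proceed by induction on $s$, the engine being the exact sequence at the bottom of diagram~\ref{eq:DandC},
\[
H^0(\Gamma, \vk; \ZZ/p^{s-1}) \xrightarrow{i_*} H^0(\Gamma, \vk; \ZZ/p^s) \to \CH(\Gamma, \vk; \ZZ/p^s) \to 0.
\]
Every class in $H^0(\Gamma, \vk; \ZZ/p^s)$ then decomposes (non-canonically) as a lift of its image in the critical cohomology plus an element of $\im(i_*)$, and it suffices to realise each summand by fundamental classes of subgraphs already living in the groups $p^d D^d_s$.

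The base case $s=1$ is immediate: since $H^0(\Gamma, \vk; \ZZ/p^0)$ is trivial, $H^0(\Gamma, \vk; \ZZ/p) \cong \CH(\Gamma, \vk; \ZZ/p)$, and lemma~\ref{fundamental.classes.critical.homology} exhibits this as generated by the classes $\funddivZ{\Delta}$ with $\Delta$ a $\ZZ/p$-oriented component of $\red{r(\Delta)}(\Gamma, \vk)$; these are by definition the generators of $D^0_1 = p^0 D^0_1$. In the inductive step I would handle the $\CH(\Gamma, \vk; \ZZ/p^s)$ factor by the same lemma applied at level $s$: its generators are $\funddivZ{\Delta}$ with $r(\Delta)-m(\Delta) \geq s$, which belong to $D^0_s$ and automatically lift to honest cocycles in $C^0(\Gamma, \vk; \ZZ/p^s)$ without any factor of $p$. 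The $\im(i_*)$ factor is then handled by the inductive hypothesis at level $s-1$: every class in $H^0(\Gamma, \vk; \ZZ/p^{s-1})$ is a $\ZZ$-combination of generators of $p^{d-1} D^{d-1}_{s-1}$, and the commuting square at the top of diagram~\ref{eq:DandC} sends such a generator $p^{d-1}\funddivZ{\Delta}$ to $p^d \funddivZ{\Delta} \in p^d D^d_s$. Assembling the two pieces exhibits $H^0(\Gamma, \vk; \ZZ/p^s)$ as the span of $\bigcup_{d \geq 0} p^d D^d_s$.

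The step I expect to be the main obstacle is ensuring that the inductive hypothesis is legitimately applicable at level $s-1$. The hypothesis of the corollary asks $\Gamma$ to be $\ZZ/p^{r'}$-reduced for some $r' \leq s-1$; this is automatic when the original $r$ already satisfies $r \leq s-1$, but breaks down in the boundary case $r = s$, where $\Gamma$ has edges with $\val{p}{k_vk_w} = s-1$. To handle this case I would either reindex the induction (for instance induct on $s - r$ with base $s = r$, which is then treated by the lemma alone since $i_*$ contributes nothing new up to the desired $D^d_s$-span), or pass to the subgraph $\Gamma'' = \red{p^{s-1}}(\Gamma, \vk)$, which is $\ZZ/p^{s-1}$-reduced, and carefully verify that generators of $D^{d-1}_{s-1}(\Gamma'', \vk)$ transfer to generators of $D^{d-1}_{s-1}(\Gamma, \vk)$ --- the delicate point being that the quantity $r(\Delta)$ attached to a subgraph depends on the ambient graph, so an inclusion of ambient graphs can in principle shrink $r(\Delta)$. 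Once this bookkeeping is settled the induction closes cleanly.
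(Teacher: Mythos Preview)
Your argument is exactly the paper's: induction using diagram~(\ref{eq:DandC}) to split $H^0(\Gamma,\vk;\ZZ/p^s)$ into the critical cohomology (handled by lemma~\ref{fundamental.classes.critical.homology}) and the image of $i_*$ (handled by induction). The paper writes ``induction over $r$'', which is almost certainly a slip for $s$, since the diagram steps from level $s-1$ to level $s$ and $r$ plays no independent role once one notes (as in the lemma's own proof) that the hypothesis is equivalent to ``$\Gamma$ is $\ZZ/p^s$-reduced''.

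You are right to flag the boundary case $r=s$: if $\Gamma$ carries an edge with $\val p{k_vk_w}=s-1$ it is not $\ZZ/p^{s-1}$-reduced, and the corollary at level $s-1$ does not literally apply. The paper's one-line proof does not address this; the corollary is presented as an ``easy motivational result'' and is immediately subsumed by Theorem~\ref{generation}, whose proof is where the real work happens. Of your two proposed patches, the second---replace $\Gamma$ by $\red{p^{s-1}}(\Gamma)$ and check that the relevant $D^{d}_{s-1}$-generators survive---is the right one and goes through with only mild bookkeeping (for $r'\le s-1$ one has $\red{r'}(\Gamma)=\red{r'}(\red{p^{s-1}}(\Gamma))$, and the edge-boundary discrepancy only involves edges of valuation $\ge s-1$, which do not lower $r(\Delta)$ below $s-1$). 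Your first patch, however, does not work as stated: in the base case $s=r$ there is no reason for $\mathrm{im}(i_*)$ to already lie in the span of the $p^dD^d_r$ without further argument, because the lemma does \emph{not} apply to $\CH(\Gamma,\vk;\ZZ/p^{s'})$ for $s'<r$ (the reducedness hypothesis goes the wrong way).
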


We want to generalize this corollary to the case where $\Gamma$ is not $\ZZ/p^s$-reduced. 
That is, we want to prove: 
\begin{thm}
\label{generation}
  The group $H^0(\Gamma,\vk;\ZZ/p^s)$ is generated by the images of the inclusion maps
$p^dD^d_s \subset H^0(\Gamma,\vk;\ZZ/p^s)$.
\end{thm}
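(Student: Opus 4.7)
The plan is to proceed by induction on $s$, with trivial base case $s=0$. For the inductive step, I use the short exact sequence
\[
\im(i_*) \hookrightarrow H^0(\Gamma,\vk;\ZZ/p^s) \twoheadrightarrow \CH(\Gamma,\vk;\ZZ/p^s) \to 0
\]
coming from the coefficient sequence $0 \to \ZZ/p^{s-1} \xrightarrow{p\cdot} \ZZ/p^s \to \ZZ/p \to 0$. Applying the inductive hypothesis at $s-1$ shows that $H^0(\Gamma,\vk;\ZZ/p^{s-1})$ is generated by the subgroups $p^{d-1}D^{d-1}_{s-1}$, and the commutativity of diagram~\eqref{eq:DandC} sends these into $p^d D^d_s$. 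Hence $\im(i_*)$ is already generated by $p^d D^d_s$ for $d\geq 1$, and it only remains to prove that $\CH(\Gamma,\vk;\ZZ/p^s)$ is generated by the images of $D^0_s$ --- that is, by fundamental classes $\funddivZ{\Delta}$ of $\ZZ/p^s$-oriented components $\Delta$ of $\red{p^{r(\Delta)}}(\Gamma,\vk)$ satisfying $r(\Delta)-m(\Delta)\geq s$.

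To generate $\CH$, I plan to peel off fundamental classes one at a time. Given a cocycle $z=\sum_v z_v v$ whose image in $\CH$ is nonzero, some coefficient $z_{v_0}$ is a unit, so $\val p{z_{v_0}}=0$; set $c=\val p{k_{v_0}}$. The cocycle relation $k_v z_w+k_w z_v\equiv 0\pmod{p^s}$ together with the restricted valuation property, argued exactly as in the proof of lemma~\ref{lemma:subfundamental}, propagates the equality $\val p{z_v}=\val p{k_v}-c$ from $v_0$ along every edge $e(v,w)$ with $\val p{k_v k_w}<s+c$, and simultaneously determines signs $\alpha(v)\in\{\pm 1\}$ on the reached vertex set. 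Letting $\Delta$ be the appropriate connected component gives $m(\Delta)=c$ and $r(\Delta)\geq s+c$, so $r(\Delta)-m(\Delta)\geq s$, and exhibits $z|_{V(\Delta)}$ as a unit multiple of $\funddivZ{\Delta}$. Subtracting this multiple annihilates $z$ on $V(\Delta)$; iterating finitely many times produces a cocycle all of whose coefficients are divisible by $p$, hence zero in $\CH$.

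The main obstacle is to verify that each $\Delta$ produced genuinely qualifies as a generator of $D^0_s$: it must be a $\ZZ/p^s$-oriented component of $\red{p^{r(\Delta)}}(\Gamma,\vk)$, not merely of the auxiliary reduction $\red{p^{s+c}}\Gamma$. When $r(\Delta)$ strictly exceeds $s+c$, the component in $\red{p^{r(\Delta)}}\Gamma$ may contain additional internal edges of intermediate valuation on which the cocycle equation places no constraint, so the bipartition forced by $z$ need not extend to them. For odd $p$ this is resolved by appealing to lemma~\ref{lemma.pn.orientation} together with a careful refinement of the choice of component; for $p=2$ one has to pass instead through lemma~\ref{lemma.two.orientation}, where non-bipartite graphs can still be $\ZZ/2^s$-oriented provided $\val 2{GCD(\vk|_\Delta)}=0$. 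Sorting out the interplay between propagation thresholds, the correct choice of reduction level, and orientability at the prime $2$ is presumably what the authors have in mind when they warn that the proof is ``regrettably technical''.
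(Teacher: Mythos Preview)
Your inductive framework is sound and matches the paper's: reduce to showing that the critical cohomology $\CH(\Gamma,\vk;\ZZ/p^s)$ is hit by $D^0_s$, since diagram~\eqref{eq:DandC} handles $\im(i_*)$. However, both the diagnosis of the ``main obstacle'' and the sketched resolution miss the mark, and the paper proceeds quite differently.

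First, your stated worry is off target. If $\Delta$ is the component of $v_0$ in $\red{p^{s+c}}\Gamma$, then by definition of $r(\Delta)$ every edge of the edge boundary $B(\Delta)$ has valuation $\geq r(\Delta)$, so passing from $\red{p^{s+c}}$ to $\red{p^{r(\Delta)}}$ adds \emph{no} edge touching $V(\Delta)$; hence $\Delta$ is already the component of $v_0$ in $\red{p^{r(\Delta)}}\Gamma$, and there are no ``additional internal edges''. Your propagation argument does show $\Delta$ is bipartite (for odd $p$) with $m(\Delta)=c$ and $r(\Delta)\geq s+c$. The genuine obstruction is elsewhere: $D^0_s$ requires $\Delta$ to be $\ZZ/p^s$-\emph{oriented}, and a connected bipartite graph need not be $\ZZ/p^s$-oriented when it is not $\ZZ/p^s$-reduced. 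For instance the path $a\!-\!b\!-\!c$ with $k_a=k_c=1$, $k_b=p^2$ has $\CH(\cdot;\ZZ/p^2)\cong(\ZZ/p)^2$. Since your $\Delta$ is only $\ZZ/p^{s+c}$-reduced, lemma~\ref{lemma.pn.orientation} does not apply to it when $c>0$, and lemma~\ref{one.more.edge} (which would let you conclude orientability from the nontrivial class $z|_\Delta$) is proved only under a reducedness hypothesis. Your appeal to those lemmas ``together with a careful refinement of the choice of component'' does not address this.

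The paper does \emph{not} attempt the direct ``propagate from a unit coefficient and peel off $\funddivZ\Delta$'' strategy. Instead it proves the pullback statement of lemma~\ref{main.lemma.generation}: if $z$ lies in $q_s(G_s)\cap H^0$, then already $z\in q_s(pG_s)$. The argument is by contradiction on a \emph{minimal} expression $z=\sum_\Delta \zeta_\Delta\funddivZ\Delta$. One analyses the vanishing of $(dz)_e$ on boundary edges to classify edges into two types, isolates the subgraphs $\Delta_i$ with maximal $r(\Delta_i)=r_{\max}$, and assembles them into the component $\Psi$ of $\red{p^{r_{\max}+1}}\Gamma$ containing a chosen $\Delta_0$. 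The restriction $\bar z=\sum_i\zeta_{\Delta_i}\funddivZ{\Delta_i}$ is then shown to be a cocycle on $\Psi$, and after dividing by the appropriate power of $p$ it furnishes a class of order $p^{s-d}$ in $C^0(\Psi;\ZZ/p^{s-d})$, which forces $\Psi$ to be $\ZZ/p^{s-d}$-oriented. Replacing $\zeta_{\Delta_0}\funddivZ{\Delta_0}$ by the corresponding multiple of $\funddivZ\Psi$ shortens the expression, contradicting minimality. Thus the paper never extracts an oriented subgraph directly from the coefficients of $z$; it extracts one from the \emph{combinatorics of a minimal presentation} of $z$ in $G_s$, which sidesteps exactly the reducedness issue that blocks your approach.
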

    
The proof of this theorem will take up the rest of this section. The argument will be by induction on $s$.
We introduce further notation which will be helpful when we do the induction step.

For the duration of this proof we will write
$G_s$ for the group generated by the  subgroups $\{p^dD^{d+1}_{s}\}_d\subset C^0(\Gamma,\vk;\ZZ/p^s)$. 
The cohomology $H^0(\Gamma,\vk;\ZZ/p^s)$ equals the cycles in 
$\{p^dD^{d+1}_{s}\}_d\subset C^0(\Gamma,\vk;\ZZ/p^s)$, so we can consider
it as subgroup of the chains. We have a diagram of inclusions
\begin{equation}
  \label{intersection}
\begin{CD}
  pG_s @>>> G_s\\
@V\tilde q_s VV @V q_s VV\\  
H^0(\Gamma,\vk;\ZZ/p^s) @>>> C^0(\Gamma,\vk;\ZZ/p^s)
\end{CD} 
\end{equation}
and if $\iota,\rho$ are the maps induced by inclusion respectively reduction of coefficients, we have 
commutative diagrams
\begin{equation}
\begin{CD}
  G_{s-1} @>>> G_s\\
@V q_{s-1} VV @V  q_s VV\\
C^0(\Gamma,\vk;\ZZ/p^{s-1}) @>\iota >> C^0(\Gamma,\vk;\ZZ/p^s)
\end{CD}
\quad
\begin{CD}
  G_s @>>> pG_{s-1}\\
@V q_s VV @V q_{s-1} VV\\
C^0(\Gamma,\vk;\ZZ/p^s) @>\rho>> C^0(\Gamma,\vk;\ZZ/p^{s-1})
\end{CD}
\end{equation}
 
\begin{remark}
  \label{Gs.and.points}
  For a one-vertex subgraph of a connected graph $\Delta_v\subset \Gamma$ we have that $r(\Delta_v)-m(\Delta_v)\geq 0$, so that $\Delta_v\in D^s_s$ and $G_s$ contains $p^{s-1}v\in C^0(\Gamma,\vk;\ZZ/p^s)$. It follows that for every $s$, $G_s$ contains $p^{s-1}C^0(\Gamma,\vk;\ZZ/p^s)$.
\end{remark}
In this notation, theorem~\ref{generation} is the statement that the map $\tilde q_s$ is surjective
for all $s$. 
The main work will be in the proof of the following lemma.
In order to make the proof easier to follow, we will cut the argument up into eight steps.

\begin{lemma}
\label{main.lemma.generation}
 Assume that $\tilde q_{s-1}$ is surjective. Then, $q_s(pG_s)=q_s(G_s)\cap H^0(\Gamma,\vk;\ZZ/p^s)\subset C^0(\Gamma,\vk;\ZZ/p^s)$. That is, diagram (\ref{intersection}) is a pull-back diagram.
 \begin{proof}

\noindent\emph{1. Reduction of the lemma to a ``main claim''}   
  If $p$ is odd it is possible to shorten the proof somewhat, using that we can
characterize orientability by bipartiteness. We will give a proof which is a little more involved, but
has the virtue that it works in the same way for $p=2$ as for odd primes.

Obviously $q_s(pG_s)\subset q_s(G_s)$, and from diagram (\ref{intersection}) we see that  $q_s(pG_s)\subset  H^0(\Gamma,\vk;\ZZ/p^s)$, so that $q_s(pG_s)\subset q_s(G_s)\cap H^0(\Gamma,\vk;\ZZ/p^s)$ and it is the
opposite inclusion  $q_s(pG_s)\supset q_s(G_s)\cap H^0(\Gamma,\vk;\ZZ/p^s)$ that we have to prove.
Let $z$ by a cocycle in the subgroup $q_s(G_{s})\subset C^0(\Gamma,\vk;\ZZ/p^s)$. 
We can write
\begin{equation}
\label{*}
z=\sum_\Delta \zeta_\Delta \funddivZ \Delta,
\end{equation}
where each $\Delta$ in the sum satisfies the two conditions that each $\Delta$ is an oriented component of some 
$\red {r(\Delta)}(\Gamma,\vk)$, and that for each $\Delta$ we have the 
inequalities
\begin{equation}
\label{**}
\val p {\zeta_\Delta}\geq s-r(\Delta)+m(\Delta)-1. 
\end{equation}
We have to prove 
\[
\tag{Main Claim}
z\in q_s(pG_s),
\] 
since the above argument shows that the main claim will prove the lemma.

\noindent\emph{2. Discussion of $\val p {\zeta_\Delta}$ in a minimal counter example.}

We are going to argue by contradiction. Let us fix a counter example
$z=\sum_\Delta \zeta_\Delta \funddivZ \Delta$ to the main claim 
involving as few $\Delta$ in the corresponding sum (\ref{*}) as possible.
We will refer to the
formal sum $\sum_\Delta \zeta_\Delta \funddivZ \Delta$
as a minimal counterexample to the main claim.
For the rest of the proof, we will work with this particular minimal counter example to
deduce a contradiction.

Let
$X$ denote the set of all $\Delta$ that occur in this minimal counter example. 
To each $\Delta\in X$ we associate the number $\zeta'_{\Delta}$ defined by that
$\zeta'_{\Delta}=p^{-\val p{\zeta_\Delta}}\zeta_\Delta$. Then 
$\zeta'_\Delta$ is an integer which is relatively prime to $p$. 
The minimality of the sum (\ref{*}) has some serious consequences.
We claim that the minimality implies that we can strengthen (\ref{**}) to the
statement that
each $\Delta\in X$ actually satisfies the equality
$\val p {\zeta_\Delta}= s-r(\Delta)+m(\Delta)-1$.
To validate the claim, assume to the contrary that
some $\Delta_0\in D^{d+1}_{s+1}(\Gamma,\vk)$ satisfies the strict inequality
$\val p {\zeta_{\Delta_0}} > s-r(\Delta_0)+m(\Delta_0)-1$ so that
$\val p {\zeta_{\Delta_0}} \geq s+1-r(\Delta_0)+m(\Delta_0)-1$ and  
$\zeta_\Delta \funddivZ \Delta_0\in p^{d+1}D^{d+1}_{s+1}(\Gamma,\vk)\subset pG_{s+1}$. Then 
$\zeta_\Delta \funddivZ \Delta_0$ is a cocycle, and
the formal sum 
\[
  z-\zeta_\Delta \funddivZ \Delta=\sum_{\Delta\in X;\Delta\not=\Delta_0}\zeta_\Delta\funddivZ{\Delta}
\]
  would be a counterexample
to the lemma involving fewer $\Delta$, against the minimality assumption.

That is, we can write   
\begin{equation}
\label{zeta.equality} 
\zeta_{\Delta}=p^{s-r(\Delta)+m(\Delta)-1}\zeta_\Delta'.  
\end{equation}

Note that in particular the equality
$\val p {\zeta_\Delta}= s-r(\Delta)+m(\Delta)-1$ implies that $r(\Delta)<\infty$.

\noindent\emph{3. The consequence of the vanishing of the coefficient of an edge $e$ in $dz$.}
Let $z$ be a minimal counterexample as discussed above.
The fact that $z$ is assumed to be a cocycle imposes further conditions on the coefficients $\zeta_\Delta$. Let $v,w\in V(\Gamma)$ and $e=e(v,w)$ an edge of $\Gamma$. We write $d({\funddivZ\Delta})_e$ for the coefficient
of $e$ in $d({\funddivZ\Delta})$. This coefficient is 0, unless $e$ is in the edge boundary of 
$\Delta$. If $e\in B(\Delta)$, there are three cases:
\[
d({\funddivZ\Delta})_{e(v,w)}=
\begin{cases}
  \alpha_{\Delta}(v)p^{-m(\Delta)}k_vk_w&\text{ if $v\in V(\Delta)$, $w\not\in V(\Delta)$}\\
   \alpha_{\Delta}(w)p^{-m(\Delta)}k_vk_w&\text{ if $v\not\in V(\Delta)$, $w\in V(\Delta)$}\\
 \alpha_{\Delta}(v)p^{-m(\Delta)}k_vk_w+\alpha_{\Delta}(w)p^{-m(\Delta)}k_vk_w&\text{ if $v\in V(\Delta)$, $w\in V(\Delta)$}\\
\end{cases}
\] 
Let $I_v=\{\Delta\in X\mid v\in V(\Delta), e\not\in E(\Delta)\}$ respectively $I_w=\{\Delta\in X\mid w\in V(\Delta), e\not\in E(\Delta)\}$.
Then we can rewrite the coefficient of $e(v,w)$ in $dz$ as follows:
\begin{align*}
(dz)_e&=\sum_{\Delta \in I_v}\zeta_\Delta \alpha_{\Delta}(v)p^{-m(\Delta)}k_vk_w
+\sum_{\Delta \in I_w}\zeta_\Delta \alpha_{\Delta}(w)p^{-m(\Delta)}k_vk_w\\
\end{align*}

Since $z$ is a cocycle
the coefficient of $e$ in $dz$ vanishes. 
But $\zeta_{\Delta}=p^{s-r(\Delta)+m(\Delta)-1}\zeta_\Delta'$, so we are left with:
\begin{equation}
  \label{***}
k_vk_w
     \Big(\sum_{\Delta \in I_v}\alpha_\Delta(v)p^{s-r(\Delta)-1}\zeta_{\Delta}'+\sum_{\Delta \in I_w}\alpha_\Delta(w)p^{s-r(\Delta)-1}\zeta_{\Delta}'\Big)
=0\in \ZZ/p^s  
\end{equation}

\noindent\emph{4. Cutting down the index sets.}
We simplify equation~(\ref{***}) by showing that most of the terms in the sums vanish.
The set $I_v$ is contained in the set of $\Delta$ which are oriented components of $\red r(\Gamma,\vk)$ for various $r=r(\Delta)$.
For each $r$, there is at most one such component $\Delta$ which contains $v$, and similarly for $w$.
That is, the numbers $r(\Delta)$ for $\Delta\in I_v$ are all distinct.
It follows that 
if $I_v$ is non-empty, there is some $\Delta_v\in I_v$ such that if $\Delta\in I_v$ then $r(\Delta)\leq r(\Delta_v)$
and such that equality occurs if and only if $\Delta=\Delta_v$. 

\emph{Claim:} For
$\Delta\not=\Delta_v\in I_v$, the coefficient $k_vk_wp^{s-r(\Delta)-1}\zeta_{\Delta}'$ vanishes.

If $\Delta\not=\Delta_v$, we have that  $r(\Delta_v)> r(\Delta)$.
Since $e$ is in the edge boundary $\Delta_v$, we also have that $\val p{k_vk_w}\geq r(\Delta_v)$. 
We conclude from this that $\val p{k_vk_w}\geq r(\Delta)+1$.

Now we recall that $\val p {\zeta_\Delta}= s-r(\Delta)+m(\Delta)-1$.
 
We see that 
\begin{equation*}
  \val p {k_vk_w p^{s-r(\Delta)-1} \zeta_\Delta'}=\val p {k_vk_w} +s -r(\Delta)-1\geq (r(\Delta)+1)+s -r(\Delta)-1=s
\end{equation*}
Since we are working in the ring $\ZZ/p^{s}$, this proves the claim.
Purging the vanishing terms from the equality~(\ref{***}) we get that
\begin{equation}
\label{****}
k_vk_w p^{s-r(\Delta_v)-1}\alpha_{\Delta_v}(v)\zeta_{\Delta_v}'+k_vk_w p^{s-r(\Delta_w)-1}\alpha_{\Delta_w}(w)\zeta_{\Delta_w}'=0\in \ZZ/p^{s}.  
\end{equation}

\noindent\emph{5. Discussion of the implications of equation (\ref{****}).}
For an edge $e\in E(\Gamma)$ equation~(\ref{****}) allows two possibilities. We say that the edge $e$ is of type I if any of the following
equivalent conditions are satisfied:
\begin{itemize}
\item $k_vk_w p^{s-r(\Delta_v)-1}\zeta_{\Delta_v}'=0\in \ZZ/p^{s}$
\item $k_vk_w p^{s-r(\Delta_w)-1}\zeta_{\Delta_w}'=0\in \ZZ/p^{s}$
\item $\val p {k_vk_w} > s-1-(s-r(\Delta_v)-1)=r(\Delta_v)$
 \item  $\val p {k_vk_w} > r(\Delta_w)$.
\end{itemize}
We say that the edge is of type II if any of the following equivalent conditions hold:

\begin{itemize}
\item $k_vk_w p^{s-r(\Delta_v)-1}\zeta_{\Delta_v}'\not=0\in \ZZ/p^{s}$
\item $k_vk_w p^{s-r(\Delta_w)-1}\zeta_{\Delta_w}'\not=0\in \ZZ/p^{s}$
\item $r(\Delta_v)=\val p {k_vk_w}$
 \item $r(\Delta_w)=\val p {k_vk_w}$   
\end{itemize}
If $e$ is an edge of type II, it follows from (\ref{****}) we that
\[
\alpha_{\Delta_v}(v)\zeta'_{\Delta_v}+\alpha_{\Delta_w}(w)\zeta'_{\Delta_w}=0\in \ZZ/p.
\]
The classification of edges into types depends on the particular minimal expression for
$z$, since the sets $I_v$ and $I_w$ depend on that expression. 
  
\noindent\emph{6. $X_{max}$ and the definition of the subgraph $\Psi$.}
The strategy for  the rest of the proof of the lemma is to  show that some of the terms
in the expression for $z$ can be collected as $\funddivZ{\Psi}$ for some
new oriented subgraph $\Psi$. Then we will argue that this
contradicts the minimality of the expression for $z$.
Our first task will be to construct the subgraph $\Psi$.
 
Recall that for every $\Delta\in X$, $r(\Delta)<\infty$.
Let $r_{max}$ be the maximal value of $r(\Delta)$ for $\Delta\in X$.
We consider the set $X_{max}$  of oriented components of $\red {r_{max}}(\Gamma,\vk)$
which occur with non-trivial coefficient in the expression for $z$.

Now choose $\Delta_0\in X_{max}$ such that $m(\Delta_0)\leq m(\Delta)$ for all $\Delta\in X_{max}$.
We define the subgraph $\Psi\subset \Gamma$ to be the component of $\red{r_{max}+1}(\Gamma,\vk)$ containing  $\Delta_0$.

The reduction $\red {r_{max}} \Psi$ has components $\Delta_0,\Delta_1,\dots,\Delta_k$.
If $\Delta_0$ were the only component of $\Psi$, it would be a component of
 $\red{r_{max}+1}(\Gamma,\vk)$, so that $r(\Delta_0)\geq r_{mzx+1}$
However, $r(\Delta_0)=r_{max}$, and we conclude that 
$\Delta_0$ cannot be the only component of $\Psi$.
Let $\Delta_i$ ($i\not=0$) be another component of
$\red {r_{max}} \Psi$. In particular $r(\Delta_i)\geq r_{max}$. On the other hand,
we cannot have that $r(\Delta_i)\geq r_{max}+1$, because if it were, then
$\Delta_i$ would equal $\Psi$ and $\Delta_0\subset \Delta_i$. 
It follows that $r(\Delta_i)=r_{max}$ for each $i$.

\noindent\emph{7. The restriction of $z$ to $\Psi$ is a cocycle.}

Now we consider the following cochain.
\[
\bar z= \sum_i \zeta_{\Delta_i} \funddivZ{\Delta_i}\in C^0(\Psi,\vk;\ZZ/p^s)  
\]
We compute the coboundary of this cochain. Let $\bar z_e$ be the coefficient of $e$ in $d\bar z$. We want to prove that $\bar z_e=0$ for all  $e\in E(\Psi)$.

The edges $e\in E(\Psi)$ are either
edges of one of the subgraphs $\Delta_i$, or they connect two different
subgraphs $\Delta_i$ and $\Delta_j$. If $e\in E(\Delta_i)$, the
$d\bar z_e=d(\funddivZ{\Delta_i})_e$ is zero for all $i$.
If $e=e(v,w)$ with $v\in V(\Delta_i)$ and $w\in V(\Delta_j)$,
then $\val p {k_vk_w}=r_{max}+1$. Since $r(\Delta_i)=r(\Delta_j)=r_{max}$ by step 5, we
have that $\Delta_i=\Delta_v$ and $\Delta_j=\Delta_w$. According to
equation~(\ref{****})
\begin{align*}
  d(\bar z)_e&=k_vk_wp^{-m(\Delta_i)}\alpha(\Delta_i)\zeta_{\Delta_i}+
               k_vk_wp^{-m(\Delta_j)}\alpha(\Delta_j)\zeta_{\Delta_j}\\
  &=0
\end{align*}
We conclude that $\bar z$ is a cocycle in $C^0(\Psi,\vk;\ZZ/p^s)$.
 
\noindent\emph{7. $\Psi\in D^{d}_{s}$, where $d=s-r(\Psi)+m(\Psi)$.}

There are two conditions to check.  That $r(\Psi)-m(\Psi)\geq s-d$ follows trivially
from the definition of $d$. But we also have to check that $\Psi$ is $\ZZ/p^{s-d}$--orientable.
 
 According to
lemma~\ref{one.more.edge}, to prove this it is sufficient to find a cocycle
representing a non--trivial element in
$C(\Psi,\ZZ/p^{s-d})$, that is, a cocycle $u=\sum u_v v\in C^0(\Psi,\ZZ/p^{s-d})$ 
with at least one coefficient $u_v$ prime to $p$.
We propose to obtain $u$ by dividing $\bar z$ by $p^d$.  
To be able to do so, we have to check that if we write 
$\bar z=\sum_{v\in V(\Psi)}\bar z_v v$, then $\val p{\bar z_v}\geq d$.
We compute this valuation for $v\in V(\Delta_i)$

\begin{align*}
\val p {\bar z_v}&=\val p{\zeta_{\Delta_i} p^{-m(\Delta_i)}k_v}\\
           &=(s-r(\Delta_i)+m(\Delta_i)-1)-m(\Delta_i)+\val p{k_v}\\
           &=s-r_{max}-1 + \val p{k_v}\\
           &=s-r(\Psi)+\val p{k_v}\\
           &\geq s -r(\Psi)+m(\Psi)\\
           &=d  
\end{align*}
It follows that $u=p^{-d}(\bar z)$ is indeed defined. If $v\in V(\Psi)$ is a vertex
such that $k_v=m(\Psi)$, we see from the same computation that
$\val p{u_v}=\val p{k_v}-m(\Psi)=0$. That is, $\Psi$ is
$\ZZ/p^{s-d}$ orientable and $u$ defines an orientation.

Using these orientations, if $e(v,w)$ is an edge in $E(\Psi)$ connecting $v\in V(\Delta_i)$ to $w\in V(\Delta_j)$, 
then 
\begin{equation}
  \label{eq:inducedOrientation}
  \alpha_{\Delta_i}(v)=-\alpha_{\Delta_j}(w)
\end{equation}

\noindent\emph{8. Using $\Psi$ to get a counterexample contradicting the minimality of the expression for $z$.}

Since $\Psi\in D^{s-r_{max}-1+m(\Psi)}_s$, by definition  $p^{s-r_{max}+m(\Psi)-1}\funddivZ\Psi\in q_s(pG_s)$. In particular, this class is a cocycle in $q_s(G_s)$.
Recall that $z$ is a counter example to the main claim, that is $z\not \in q_s(pG_s)$,
but $z$ is a cocycle in $q_s(G_s)$. 
 Consider
 \[
v=z- \zeta_{\Delta_0}'p^{s-r_{max}+m(\Psi)-1}\funddivZ\Psi\in pG_s\not \in q_s(pG_s).
\]
This is a cocycle in $q_s(G_s)$ and $v\not\in q_s(pG_s)$, so that $v$ is also a counter example to the main claim. Now recall that the expansion
$z=\sum_{\Delta\in X} \zeta_\Delta\funddivZ{\Delta}$
  with $\Delta$ an $r(\Delta)$-oriented component of $\red {r(\Delta)}(\Gamma,\vk)$ and
$\val p{\zeta_\Delta}\geq s-r(\Delta)+m(\Delta)-1$, and the assumtion is that the   
the number of terms in this sum is as small as possible. We complete the argument for the main claim by showing that there is an expansion of $v$ satisfying the same conditions, but with a smaller number of terms, contradicting the minimality assumption. 

We can write $v$ as a sum
\[
\left(\sum_{\Delta\in X,\Delta\not=\Delta_0} u_{\Delta}\funddivZ\Delta\right)+(\zeta_{\Delta_0}-\zeta_{\Delta_0}' p^{s-r_{max}+m(\Psi)-1})\funddivZ{\Delta_0}
\]
According to (\ref{zeta.equality}), $\zeta_{\Delta_0}-\zeta_{\Delta_0}' p^{s-r_{max}+m(\Psi)-1}=0$,
so that $u=\sum_{\Delta\in X,\Delta\not=\Delta_0} u_{\Delta}\funddivZ\Delta$. This sum has strictly fewer terms, and it's easy to check that this contradicts the minimality assumption.

We have finished the proof of the lemma.
\end{proof}
\end{lemma}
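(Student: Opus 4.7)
The plan is to prove the nontrivial inclusion $q_s(G_s)\cap H^0(\Gamma,\vk;\ZZ/p^s)\subset q_s(pG_s)$ by a minimal-counterexample argument. So I would pick a cocycle $z\in q_s(G_s)$ which does \emph{not} lie in $q_s(pG_s)$ and write it as $z=\sum_{\Delta\in X}\zeta_\Delta\funddivZ\Delta$ with the number of summands $|X|$ as small as possible, each $\Delta$ being an $\ZZ/p^{s-d(\Delta)}$-oriented component of $\red{r(\Delta)}(\Gamma,\vk)$ with $\val p{\zeta_\Delta}\ge s-r(\Delta)+m(\Delta)-1$. The first observation I would extract from minimality is that this last inequality is in fact an equality: otherwise the offending term $\zeta_\Delta\funddivZ\Delta$ would lie in $pG_s$ and could be subtracted off, still leaving a counterexample with fewer terms. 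So I would write $\zeta_\Delta=p^{s-r(\Delta)+m(\Delta)-1}\zeta'_\Delta$ with $\zeta'_\Delta$ coprime to $p$.

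Next I would exploit the cocycle condition $dz=0$ edge by edge. For each edge $e=e(v,w)$ I would unwind the coefficient of $e$ in $dz$, noting that the only $\Delta$'s contributing are those containing $v$ or $w$ but not the edge $e$ itself. Since the $r(\Delta)$'s among $\Delta\ni v$ (resp.\ $\Delta\ni w$) are mutually distinct (a vertex lies in at most one component of $\red r$ for each $r$), I would identify a unique maximiser $\Delta_v$ and $\Delta_w$. A short valuation estimate then shows that all other terms in the cocycle equation are already zero in $\ZZ/p^s$, leaving a clean two-term relation $\alpha_{\Delta_v}(v)\zeta'_{\Delta_v}+\alpha_{\Delta_w}(w)\zeta'_{\Delta_w}\equiv 0\pmod p$ whenever the edge carries nontrivial information.

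Now I would aim to aggregate several $\Delta_i\in X$ into one larger oriented subgraph. Let $r_{max}=\max_{\Delta\in X}r(\Delta)$, choose $\Delta_0\in X$ with $r(\Delta_0)=r_{max}$ and $m(\Delta_0)$ minimal, and let $\Psi$ be the component of $\red{r_{max}+1}(\Gamma,\vk)$ containing $\Delta_0$. The components of $\red{r_{max}}\Psi$ are $\Delta_0,\Delta_1,\dots,\Delta_k$ with $k\ge 1$ (otherwise $\Psi$ itself would already have $r(\Psi)>r_{max}$). I would form
\[
\bar z=\sum_{i=0}^{k}\zeta_{\Delta_i}\funddivZ{\Delta_i}\in C^0(\Psi,\vk;\ZZ/p^s)
\]
and check that it is a cocycle: on an edge internal to some $\Delta_i$ this is trivial, and on an edge $e(v,w)$ crossing $\Delta_i$ and $\Delta_j$ the cocycle relation derived in the previous step handles it. Dividing out by $p^{d}$ with $d=s-r(\Psi)+m(\Psi)$, the valuation equality $\val p{\zeta_{\Delta_i}}=s-r_{max}+m(\Delta_i)-1$ shows every coefficient of $\bar z$ is divisible by $p^d$, and at a vertex $v$ with $\val p{k_v}=m(\Psi)$ the quotient is a unit, so $\Psi$ is $\ZZ/p^{s-d}$-orientable with $\bar z=\zeta'_{\Delta_0}p^{d}\funddivZ\Psi$.

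Finally, I would use $\Psi$ to shorten the expression. The class $\zeta'_{\Delta_0}p^{d}\funddivZ\Psi$ belongs to $p^{d}D^{d}_s\subset pG_s$ (since $d\ge 1$, as $r_{max}<s$ by the $\ZZ/p^{s-1}$ surjectivity assumption combined with Remark~\ref{Gs.and.points}). Subtracting it from $z$ produces a cocycle $v=z-\zeta'_{\Delta_0}p^{d}\funddivZ\Psi$, which is still a counterexample (since $pG_s$-classes don't change $q_s(G_s)/q_s(pG_s)$-membership), but whose expansion collapses the $k+1\ge 2$ terms $\{\zeta_{\Delta_i}\funddivZ{\Delta_i}\}_{i=0}^k$ into at most $k$ terms (the $\Delta_0$ contribution cancels entirely by the equality $\zeta_{\Delta_0}=\zeta'_{\Delta_0}p^d$). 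This contradicts minimality and proves the main claim. The main obstacle I expect is the bookkeeping in the verification that $\bar z$ descends to a genuine orientation of $\Psi$: this is exactly the point where the restricted valuation property and the bipartitioning information carried by $\alpha_{\Delta_v}(v)+\alpha_{\Delta_w}(w)=0\mod p$ must combine into the correct divisibility of every coefficient of $\bar z$ by $p^d$, while keeping at least one coefficient a unit.
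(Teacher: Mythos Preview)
Your proposal follows the paper's proof essentially step for step: minimal counterexample, sharpening the valuation inequality to an equality, the edge-by-edge analysis isolating the maximal-$r$ contributors $\Delta_v,\Delta_w$, the construction of $\Psi$ as the component of $\red{r_{max}+1}(\Gamma,\vk)$ containing a minimal-$m$ member of $X_{max}$, the verification that $\bar z$ is a cocycle on $\Psi$ and descends to an orientation after dividing by $p^d$, and the final subtraction to contradict minimality.

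One overstatement to flag: you write ``$\bar z=\zeta'_{\Delta_0}p^{d}\funddivZ\Psi$'', but this equality need not hold literally---what the argument actually gives (and what the paper uses) is only that $p^{-d}\bar z$ is \emph{some} orientation class of $\Psi$, which suffices to conclude $\Psi\in D^d_s$. The subtraction step then works because the restriction of $\funddivZ\Psi$ to $V(\Delta_0)$ is $p^{m(\Delta_0)-m(\Psi)}\funddivZ{\Delta_0}$, so the $\Delta_0$ term cancels regardless; the other $\Delta_i$ terms need not cancel, and the paper does not claim they do---it only needs the total count of terms to drop. Your justification ``$\zeta_{\Delta_0}=\zeta'_{\Delta_0}p^d$'' implicitly assumes $m(\Delta_0)=m(\Psi)$, which is not guaranteed by minimising $m$ only over $X_{max}$ (some $\Delta_i\subset\Psi$ may lie outside $X$); the paper has the same slip, and the fix is the restriction computation just mentioned.
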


\begin{proof}[Proof of theorem~\ref{generation}.]
We assume inductively that the map
$\tilde q_{s-1}:pG_{s-1}=\oplus p^dD^d_{s-1} \to H^0(\Gamma,\vk;\ZZ/p^{s-1})$ is surjective, and need to prove that
$\tilde q_s : pG_{s}=\oplus p^dD^d_{s}\to H^0(\Gamma,\vk;\ZZ/p^{s})$ is also surjective.

Let $z\in H^0(\Gamma,\vk;\ZZ/p^{s})\subset C^0(\Gamma,\vk;\ZZ/p^{s})$.
Our first step is to prove that $z\in q(G_s)$.
The reduction $z'$ of $z$ modulo $p^{s-1}$ is also a cocycle, $z'\in H^0(\Gamma,\vk;\ZZ/p^{s-1})\subset C^0(\Gamma,\vk;\ZZ/p^{s-1})$.
By the induction assumption, $z'\in \tilde q_{s-1}(pG_{s-1})$. So we can write
\[
z'=\sum_\Delta \zeta_\Delta \funddivZ \Delta \in C^0(\Gamma,\vk;\ZZ/p^{s-1}),
\]
where each term $\zeta_\Delta \funddivZ \Delta$ in this sum satisfies that for
$d=\val p {\zeta_\Delta}$ we have that $\Delta\in D^d_{s-1}=D^{d+1}_{s}$. 
For each $\Delta$, choose a $\bar \zeta_\Delta\in \ZZ/p^s$ which reduces to $\zeta_\Delta$ modulo $p^{s-1}$.   
Since $\val p {\bar\zeta_\Delta}\geq \val p {\zeta_\Delta}=d$ as above, we have that
$\bar\zeta_\Delta \funddivZ \Delta\in p^dD^{d+1}_s\subset q_s(G_s)$, and consequently
$\sum_\Delta \bar\zeta_\Delta \funddivZ \Delta\in q_s(G_s)$.
Now notice that the difference
\[
u=z-\sum_\Delta \bar\zeta_\Delta \funddivZ \Delta\in C^0(\Gamma,\vk;\ZZ/p^s)
\]  
is divisible by $p^{s-1}$. However,
by remark~\ref{Gs.and.points} this means that $u$ is in $q_s(G_{s})$.
so we can conclude that $z\in q_s(G_s)$.

This means that $z$ is a cocycle
in $q_s(G_s)$, so by lemma~\ref{main.lemma.generation}
$z\in \tilde q_s(pG_s)$, which completes the proof of the theorem.
\end{proof}
 
\section{The fundamental forest}
\label{sec:forest}
\subsection{The structure of the forest}
\label{structure.forest}
Let $(\Gamma,\vk)$ be a negative color scheme.
For each natural number $r$ we consider the reduced negative color scheme $\red{p^r}(\Gamma,\vk)$. 
\begin{defn}
$\HB_r(\Gamma,\vk)$ is the set of the $p^r$-orientable components $\Delta$ of $\red{p^r}(\Gamma,\vk)$ such that $m(\Delta)=\min_{v\in V(\Delta)}(\val p {k_v})<r$.  
\end{defn}
We reformulate this slightly. Consider the following five conditions which a subgraph $\Delta\subset \Gamma$ might or might not satisfy. The conditions depend on the prime $p$ and also on a natural number $r$.
\begin{enumerate}[(H1)]
\item $\Delta$ is connected.
\item  Each edge $e(v,w)\in E(\Delta)$ satisfies the inequality $\val p{k_v}+\val p {k_w}<r$.
\item Let $e$ be an  edge in the edge boundary of $\Delta$, that is $e=e(v,w)\in E(\Gamma)\setminus E(\Delta)$ but  $v\in V(\Delta)$. Then  $\val p{k_v}+\val p {k_w}\geq r$.
\item $\Delta$ is $p^r$ orientable.
\item  If $\Delta$ has one 
vertex $v$ and no edges, then $\val p{k_v}<r$.
\end{enumerate}
\begin{lemma}
\label{H.characterization}Assume that the graph $\Gamma$ does not contain any one vertex components.
$\Delta\subset \Gamma$  represents an element of $\HB_r(\Gamma,\vk)$ if and only if it satisfies H1,H2 H3,H4 and H5 with respect to the number $r$. 
\begin{proof}
Assume that $\Delta\in \HB_r(\Gamma,\vk)$. It satisfies the conditions
H1,H2,H3 and H4 by definition.
Assume that $\Delta=\Delta[v]$ has the single vertex $v$. Since $\Gamma$ does not contain
one vertex components, there is an 
edge $e(v,w)\in E(\Gamma)$ incident to $v$. 
We estimate that $\val p {k_v}\leq \val p {k_v}+\val p {k_w}<r$, so that H5 is also satisfied.

 To prove the ``if'' part,  assume that $\Delta$ satisfies the conditions of the lemma.
It follows from H1, H3 and H4 that $\Delta$  is a $p^r$-oriented  component of $\red {p^r}(\Gamma,\vk)$, so we only have to check that
$\min_{v\in\Delta_v}k_v<r$. In case $\Delta=\Delta[v]$ is a one vertex graph, we have that
$m(\Delta)=\val p {k_v}<r$ by H5. If on the other hand
$\Delta$ has at least two vertices, any $v\in V(\Delta)$ is on some edge $e(v,w)\in E(\Delta)$.  
We get from H2 that $\val p{k_v} \leq   \val p{k_v}+\val p{k_w} <r$.
\end{proof}
\end{lemma}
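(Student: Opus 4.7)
The plan is to establish both implications by unpacking the definitions, since the five conditions H1--H5 are essentially a restatement of membership in $\HB_r(\Gamma,\vk)$ modulo mild case analysis for the single-vertex case.

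For the forward implication, suppose $\Delta\in\HB_r(\Gamma,\vk)$, so $\Delta$ is a $p^r$-orientable connected component of $\red{p^r}(\Gamma,\vk)$ with $m(\Delta)<r$. Then H1 is immediate from ``component'' being connected. Condition H2 holds because every edge of $\Delta$ is by construction an edge of $\red{p^r}(\Gamma,\vk)$, so it has not been removed by the reduction. Condition H3 is where the \emph{maximality} aspect of ``component'' enters: any edge $e(v,w)\in E(\Gamma)\setminus E(\Delta)$ with one endpoint in $V(\Delta)$ must have been deleted in $\red{p^r}(\Gamma,\vk)$ (otherwise $\Delta$ could be extended), forcing $\val p{k_v}+\val p{k_w}\geq r$. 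Condition H4 is part of the definition. For H5, if $\Delta$ consists of a single vertex $v$, then $m(\Delta)=\val p{k_v}$, and since $m(\Delta)<r$ is part of the definition of $\HB_r$, we get $\val p{k_v}<r$.

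For the converse, suppose $\Delta$ satisfies H1--H5. Conditions H1, H2, H3 together say that $\Delta$ is a connected subgraph of $\red{p^r}(\Gamma,\vk)$ (by H1 and H2) which is maximal as such (by H3, together with the fact that any edge of $\red{p^r}(\Gamma,\vk)$ between vertices of $V(\Delta)$ would have to already lie in $E(\Delta)$, else H3 would contradict its survival in the reduction). Hence $\Delta$ is a connected component of $\red{p^r}(\Gamma,\vk)$. Together with H4 this tells us $\Delta$ is a $p^r$-orientable component, so the only remaining task is to verify $m(\Delta)<r$. In the one-vertex case this is H5. If $\Delta$ has at least two vertices, then by connectedness (H1) every $v\in V(\Delta)$ is incident to some edge $e(v,w)\in E(\Delta)$, and H2 gives $\val p{k_v}\leq\val p{k_v}+\val p{k_w}<r$, whence $m(\Delta)=\min_{v\in V(\Delta)}\val p{k_v}<r$.

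There is no real obstacle here: the lemma is a definitional reformulation. The only point requiring any care is the argument that H3 captures maximality in the backward direction — one must rule out both edges leaving $V(\Delta)$ and edges between two vertices of $V(\Delta)$ that happen not to lie in $E(\Delta)$. The assumption that $\Gamma$ has no one-vertex components is used only to make the statement of H5 non-vacuous and to keep the single-vertex case clean; everything else goes through formally.
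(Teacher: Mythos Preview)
Your proof is correct and follows essentially the same definitional unpacking as the paper. One small difference worth noting: in the forward direction for H5 you simply invoke $m(\Delta)<r$ from the definition of $\HB_r$, whereas the paper instead appeals to the existence of an incident edge $e(v,w)\in E(\Gamma)$ (using the no-isolated-vertex hypothesis) and claims $\val p{k_v}+\val p{k_w}<r$ --- your route is cleaner and avoids that detour entirely.
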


\begin{defn}
If $(\Delta,r)\in \HB_r(\Gamma,\vk)$ for some $r$, we define $r(\Delta)$ to be  the supremum of all $r$ for which
$(\Delta,r)\in \HB_r(\Gamma,\vk)$.  
\end{defn}
In particular, $r(\Gamma)=\infty$ if and only if $\Gamma$ is bipartite, . 

If we have chosen an $\ZZ/p^r$--orientation of $(\Delta,\vk)$, we write  $a=(\Delta,r)\in \HB_r(\Gamma,\vk)$, $\fund a =\fund {(\Delta,\vk)}$ and $\funddivZ a=\funddivZ {(\Delta,\vk)}$. These chains will depend on the $\ZZ/p^r$--orientation of $\Delta$, but we will not emphasize that in the notation. We will get back to how we pick the orientation of subgraphs of $\Gamma$ in a systematic way.

Consider one of the components $(\Delta,\vk) \in \red{p^r}(\Gamma,\vk)$. Suppose that it is $\ZZ/p^r$--oriented. If we truncate $\Delta$ further, we obtain a graph $\red{p^{r-1}}(\Delta,\vk) \subset \red{p^{r-1}}(\Gamma,\vk)$. This graph is not necessarily connected. On the other hand, it is bipartite, also in the case $p=2$, and the orientation of $(\Delta,\vk)$ determines a bipartitioning of $\red{p^{r-1}}(\Delta,\vk)$ (lemma~\ref{lemma.pn.orientation} respectively lemma~\ref{lemma.two.orientation}). In particular it determines an orientation of $\red{p^{r-1}}(\Delta,\vk)$. It follows that 
 to every element $a\in \HB_n(\Gamma,\vk)$ we can associate the set $\Phi(a)\subset \HB_{n-1}(\Gamma,\vk)$ of the components of $a$, and that an orientation of $a$ induces an orientation on each member of $\Phi(a)$, 
Note also that each component $\Delta$ of $\red{p^{r-1}}(\Gamma,\vk)$ will be contained in a unique component of $\red{p^{r}}(\Gamma,\vk)$, but even if $\Delta$ is $\ZZ/p^{r-1}$--oriented, this component might not be $\ZZ/p^{r}$ oriented.

We now consider the subgraphs $\Delta$ which occur as oriented components of some
$\red{p^r}(\Gamma)$. These graphs form a partially ordered set by inclusion. In addition to this structure,
given a subgraph $\Delta\subset \Gamma$ we also want to keep track of for which numbers $r$
$\Delta$ is actually a component in $\red{p^r}(\Gamma)$. We collect this information in a graph.

\begin{defn}
  The fundamental forest $F(\Gamma,\vk)$ is the directed graph whose set of vertices is $\HB_*(\Gamma,\vk):=\coprod_r \HB_r(\Gamma,\vk)$. It has an edge going from $(\Delta,r)$ to $(\Delta',r')$ if and only if the graph $\Delta$ of is a subgraph of the graph $\Delta'$ and $r\leq r'$.
The set of fundamental subgraphs $\FS(\Gamma,\vk)$ is the set of all graphs $\Delta\subset \Gamma$ such that for some $r$, there is an element
$x=(\Delta,r)\in \HB_r(\Gamma,\vk)$.
\end{defn}

\begin{remark}
  \label{odd.fact}
Given the graph $\Gamma$, the fundamental forest depends on the prime, but only on whether the prime is even or odd. If necessary, we will distinguish the two cases by referring to the odd respectively the even fundamental forest.
\end{remark}
The fundamental forest  of a connected graph $\Gamma$ has a single component if and only if  $(\Gamma,\vk)$ is itself $\ZZ/p^r$ oriented for some $r$. It has infinitely many vertices if and only if $\Gamma$ is bipartite. In this case, $\HB_r(\Gamma,\vk)$ for all sufficiently large $r$ consists of the single element $(\Gamma,r)$.
In this case, the fundamental forest is a single, infinite tree. This tree contains the infinitely many vertices of the form $(\Gamma,r)$ and finitely many vertices which are not of this form,
If $\Gamma$ is not bipartite there are finitely many vertices in the fundamental forest. Each component of the fundamental forest is a tree, containing a unique maximal vertex. 

We now specify the choices of orientations. Assume first that $\Gamma$ is connected.
If $\Gamma$ is bipartitioned, we choose a bipartitioning of $\Gamma$. This induces a bipartitioning and therefore an orientation on all vertices. 
If $\Gamma$ is not bipartitioned, every tree in the fundamental forest  has a maximal element $(\Delta,\vk)$.  We choose an orientation for each of these maximal elements. For any vertex $a$ in the fundamental forest, we chose the induced orientation, defined by restriction from the unique maximal element bigger than $a$.
If $\Gamma$ is not connected, we use the above methods on every component of $\Gamma$, and end up with an
orientation on every $\Delta\subset \Gamma$ such that $(\Delta,r)\in \HB_r(\Gamma,\vk)$ for some $r$.

\begin{defn}
Let $\{\Gamma_j\}_{j\in J}$ be the set of bipartite components of $\Gamma$. We define
$\FS(\Gamma,\vk)^f=\{\Delta\in \FS(\Gamma,\vk)\mid \Delta\not=\Gamma_j\text{ for all $j\in J$\}}$.   
\end{defn}

For any $p$, we see that $\Delta\in \FS(\Gamma,\vk)^f$ if and only if
there is a strictly positive, finite number of choices of $r$ such that
$(\Delta,r)\in \HB_r(\Gamma,\vk)$. 

We will now discuss the structure of the fundamental forest. We will introduce a number of definitions. In order to understand the definitions, it might be helpful to compare them to the examples at the end of the section.

There is an obvious surjective map $P:\HB_*(\Gamma, \vk)\to \FS(\Gamma,\vk)$ given as $P(\Delta,r)=\Delta$.  
The direction of the directed graph $F(\Gamma,\vk)$ induces a partial order on the set of vertices $\HB_*(\Gamma,\vk)$ of the fundamental forest.
The map $P$ is an order preserving map to the fundamental subgraphs $\FS(\Gamma,\vk)$ ordered by inclusion. 

A maximal vertex in the fundamental forest is a pair $(\Delta,r)$ satisfying that the component of $\red{p^{r+1}}(\Gamma,\vk)$ containing $\Delta$ is not $\ZZ/p^{r+1}$ -- oriented.
Let $\Min *(\Gamma,\vk)\subset \HB_*(\Gamma,\vk)$ be the set of minimal vertices.
Using lemma~\ref{H.characterization} we see that a minimal vertices is given either by a one vertex subgraphs $\Delta[v]$ as $a =(\Delta[v],k_v+1)$, or as $(\Delta,1)$ where
all vertices of $\Delta$ have weight 0. Equivalently, a minimal vertex is a vertex of the fundamental forest of the form $(\Delta,m(\Delta)+1)$.

For $\Delta\in \FS(\Gamma,\vk)$ the set $P^{-1}(\Delta)$ is totally ordered. There is a unique minimal vertex $(\Delta,r^L(\Delta)+1)$, where $r^L(\Delta)$  is the largest weight of an edge in $\Delta$. If $\Delta\in \FS^0(\Gamma,\vk)$ there is also a maximal vertex in $P^{-1}(\Delta)$, and it is  $(\Delta,r(\Delta))$. But these vertices are not necessary maximal and minimal vertices in  $\HB_*(\Gamma,\vk)$. 

\begin{defn}
  For $\Delta\in \HB_r(\Gamma,\vk)$ let
\[
  \Phi((\Delta,r))=\{(\Omega,r-1)\mid (\Omega,r-1)\in \HB_{r-1}(\Gamma,\vk), \Omega\subset \Delta\}
\]
For $\Delta\in \FS(\Gamma,\vk)$, we define $\Phi(\Delta)=\{\Omega\mid (\Omega,r^L(\Delta))\in \Phi((\Delta,r^L(\Delta)+1))$
\end{defn}
\begin{remark}
  \label{rL}
  If $\Delta\not\in P(\Min *(\Gamma,\vk))$, the cardinality of $P^{-1}(\Delta)$ is $r(\Delta)-r^L(\Delta)$. For every $\Omega\in \Phi(\Delta)$ we have that $r(\Omega)=r^L(\Delta)$.
  On the other hand, if $\Delta\in P(\Min *(\Gamma,\vk))$, the cardinality of $P^{-1}(\Delta)$ is $r(\Delta)-m(\Delta)$.
\end{remark}

So far everything has been rather natural. In order to make certain computations later, it will later be useful to introduce a few choices, and some more notation. We discuss how to make these choices, but for the moment we are not giving any motivation for them.

For each element $(\Delta,r)\in \HB_r(\Gamma,\vk)\setminus \Min *(\Gamma,k)$ we choose
an element $s((\Delta,r))=(\Omega,r-1)\in \Phi((\Delta,r))$ such that
$m(\Omega)=m(\Delta)$. This amounts to choosing a vertex $v\in V(\Delta)$ such that
$k_v=\min_{w\in V(\Delta)} k_w = m(\Delta)$, and letting $\Omega$ be the component of
$\red {r-1}(\Delta)$ containing $v$.
This gives a map $s:\HB_*(\Gamma,\vk)\setminus \Min *(\Gamma,k)\to \HB_r(\Gamma,\vk)$.
We also define a map $s:\FS(\Gamma,\vk)\to \FS(\Gamma,\vk)$ by
$s(\Delta)=P(s(\Delta,r^L(\Delta)+1))$.

Given $a\in \HB_r(\Gamma,\vk)$ we can apply $s$ repeatedly on it until we hit an element of $\Min *(\Gamma,\vk)$. That is, there is some $m_a$ such that $s^{m_a}a\in \Min *(\Gamma,\vk)$. This defines a retraction
$B:\HB_*(\Gamma,\vk)\to \Min *(\Gamma,\vk)$ by $B(a)=s^{m_a}a$.
\begin{lemma}
  \label{BandP}
  The map $B$ factors over $P$, so that there is 
  commutative diagram
  \begin{equation}
    \label{B.definition}
  \begin{CD}
    \HB_*(\Gamma,\vk)@>B>> \Min *(\Gamma,\vk)\\
    @VPVV @VPV{\cong}V\\
    \FS(\Gamma,\vk) @>B>> P(\Min *(\Gamma,k))
  \end{CD}
\end{equation}
Moreover, $V(B(\Delta))\subset V(\Delta)$ and $m(B(\Delta))=m(\Delta)$.
\begin{proof}
  If $P(a)=P(b)$, then either $a=s^k(b)$ or $b=s^k(a)$. In either case, $s^{m_a}(a)=s^{m_b}(b)$, so that
$B(a)=B(b)$, and the lower horizontal map $B$ in the diagram is well defined. The last sentence of the lemma follows directly from the definitions of $B$ and $s$. 

To prove that  $P:\Min * (\Gamma,\vk) \to P(\Min * (\Gamma,\vk) )$ is a bijection 
we only have to prove that the map is injective. But this follows from that 
if $a=(\Delta,r)\in \Min *(\Gamma,\vk)$, then $r=m(\Delta)+1$, so that $a$ is determined by
$\Delta=P(a)$.   
\end{proof}
\end{lemma}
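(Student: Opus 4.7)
The lemma has three ingredients: that $B$ descends through $P$ and makes the square commute, that the right vertical $P$ is a bijection, and the two preservation properties $V(B(\Delta))\subset V(\Delta)$ and $m(B(\Delta))=m(\Delta)$. All the real content is in the first; I would attack the three in that order.

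For the factorisation, I would fix $\Delta\in\FS(\Gamma,\vk)$, two elements $a=(\Delta,r_a)$ and $b=(\Delta,r_b)$ of $P^{-1}(\Delta)$ with $r_a\leq r_b$, and argue that $a$ is obtained from $b$ by iterating $s$. Using the structural description of the forest given just before the definition of $s$, the fibre $P^{-1}(\Delta)$ is a finite totally ordered chain starting at $(\Delta,r^L(\Delta)+1)$, so it is enough to check that for each $r$ with $r^L(\Delta)+1<r\leq r(\Delta)$ the map $s$ may be chosen so as to send $(\Delta,r)$ to $(\Delta,r-1)$. This is the natural choice, since $(\Delta,r-1)$ belongs to $\Phi((\Delta,r))$ and trivially satisfies the constraint $m(\Delta)=m(\Delta)$. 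Adopting this convention one has $s^{r_b-r_a}(b)=a$, whence
\[
B(a)=s^{m_a}(a)=s^{m_a+r_b-r_a}(b)=s^{m_b}(b)=B(b),
\]
which gives a well-defined $B:\FS(\Gamma,\vk)\to P(\Min *(\Gamma,\vk))$ making the square commute.

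The bijectivity of the right vertical $P$ then reduces to injectivity. This follows from the characterisation of minimal vertices recalled just before the definition of $s$: any $a=(\Delta,r)\in\Min *(\Gamma,\vk)$ satisfies $r=m(\Delta)+1$, so the second coordinate is recovered from $P(a)=\Delta$. For the last clause, I would argue by induction on the number of iterations of $s$ needed to land in $\Min *(\Gamma,\vk)$: a single application of $s$ sends $(\Delta,r)$ to some $(\Omega,r-1)$ with $\Omega\subset\Delta$ and $m(\Omega)=m(\Delta)$ by the very constraint imposed on $s$, so both relations persist under iteration.

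The main subtlety will be pinning down the implicit choice in the definition of $s$. The argument above uses that one may pick $s((\Delta,r))=(\Delta,r-1)$ whenever $(\Delta,r-1)\in\HB_{r-1}(\Gamma,\vk)$, which is admissible but not the only allowed choice; with that convention the three assertions fall out transparently, and without it one would have to argue separately that the terminal vertex of the $s$-orbit of any element depends only on its image under $P$.
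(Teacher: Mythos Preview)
Your argument is correct and matches the paper's proof almost exactly: the paper simply asserts that if $P(a)=P(b)$ then one of $a,b$ is an $s$-iterate of the other, then concludes $B(a)=B(b)$ and handles the remaining clauses just as you do.

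Your last paragraph, however, raises a worry that is unfounded. The choice $s((\Delta,r))=(\Delta,r-1)$ is not merely admissible --- it is forced. If $(\Delta,r-1)\in\HB_{r-1}(\Gamma,\vk)$, then $\Delta$ is itself a component of $\red{p^{r-1}}(\Gamma,\vk)$; any $(\Omega,r-1)\in\Phi((\Delta,r))$ requires $\Omega$ to be a component of $\red{p^{r-1}}(\Gamma,\vk)$ contained in $\Delta$, and distinct components are disjoint, so $\Omega=\Delta$. Hence $\Phi((\Delta,r))$ is a singleton and no convention is needed. With this observation your proof is complete for \emph{any} valid choice of $s$, and the caveat in your final paragraph can be dropped.
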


Let $\{\Gamma_j\}$ be the set of bipartite components of $\Gamma$.
\begin{defn}
$P(\Min *(\Gamma,\vk))^0=P(\Min *(\Gamma,\vk))\setminus \{B(\Gamma_j)\}$.
We pull this definition back over diagram~\ref{B.definition} to define
\begin{align*}
\FS_*(\Gamma,\vk)^0&=B^{-1}(P(\Min *(\Gamma,\vk))^0),\\
  \Min *(\Gamma,\vk)^0&=P^{-1}(P(\Min *(\Gamma,\vk))^0),\\
  \HB_*(\Gamma,\vk)^0&=B^{-1}(P(\Min *(\Gamma,\vk))^0).
\end{align*}
\end{defn}
We see that
\[
\HB_*(\Gamma,\vk)= \cup_{a\in \Min *(\Gamma,\vk)}B^{-1}(a),
\]
and also that the set $(B\circ P)^{-1}(a)$ is a finite set if and only if
$a\in \Min *(\Gamma,\vk)^0$. The union
$\cup_{a\in \Min *(\Gamma,\vk)^0}(B\circ P)^{-1}(a)$ equals $\HB_*(\Gamma,\vk)^0$, and its complement
$\HB_*(\Gamma,\vk)\setminus \HB_*(\Gamma,\vk)^0$ equals the intersection
$\cap_i \mathrm{Im}(s^i:\HB_{*+i}(\Gamma,\vk)\to \HB_{*}(\Gamma,\vk))$.
Similarly, $\FS(\Gamma,\vk)\setminus \FS(\Gamma,\vk)^0$ equals
the set of all subgraphs $s^i(\Delta)$ where $\Delta$ ranges over the bipartite components of $\Gamma$.
\begin{defn}
If $a\in \Min *(\Gamma,\vk)^0$, we let
$T(a)=(\Delta,r)$ be the element with maximal $r$ such that
$a=B((\Delta,r))$.
Let $\Max *(\Gamma,\vk)^0=T(\Min *(\Gamma,\vk)^0))$. 
\end{defn}
We see that for $a\in \Min *(\Gamma,\vk)^0$, the set $B^{-1}(a)$ consists of the classes
$\{s^n(T(a))\}$. The restricted map $B:\Max *(\Gamma,\vk)^0\to \Min *(\Gamma,\vk)^0$ is a bijection, with inverse $T$.
\begin{remark}
  \label{inclusions}
  We also note that by lemma~\ref{BandP} the restriction $P:\Min *(\Gamma,\vk)\to \FS(\Gamma,\vk)$ is injective.  We also note that  $P(\Min *(\Gamma,\vk)^0)\subset \FS(\Gamma,\vk)^0$.
\end{remark}
\begin{lemma}
  \label{NotMax}
  If $(\Delta,r)\in \HB_r(\Gamma,\vk)$ but $(\Delta,r)\not\in \Max r(\Gamma,\vk)^0$, then there is a
  $(\Psi,r+1)\in \HB_{r+1}(\Gamma,\vk)$ such that $\Delta\in \Phi(\Psi)$, and if
  $\Omega\in \Phi(\Psi)\setminus \{\Delta\}$, then $(\Omega, r)\in \Max r(\Gamma,\vk)^0$.
  \begin{proof}
    Since $(\Delta,r)\not\in \Max r(\Gamma,\vk)^0$, there is a $(\Psi,r+1)\in \HB_{r+1}(\Gamma,\vk)$
    such that $s((\Psi,r+1))=(\Delta,r)$. This means that $\Delta$ is a component of
    $\red r (\Psi,\vk)$, that is, $\Delta\in \Phi(\Psi)$. The other elements $\Omega\in \Phi(\Psi)$ are not
    in the image of $s$, therefore they are in $\Max r (\Gamma,\vk)^0$.
  \end{proof}
\end{lemma}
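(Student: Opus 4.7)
The plan is to extract the promised $(\Psi, r+1)$ directly from the hypothesis that $(\Delta, r) \notin \Max r(\Gamma, \vk)^0$, and then use the uniqueness of components of $\red{p^{r+1}}(\Gamma, \vk)$ to control the siblings of $\Delta$ in $\Phi(\Psi)$. First I would establish that $(\Delta, r)$ admits an $s$-preimage in $\HB_{r+1}(\Gamma, \vk)$. There are two cases that need to be handled uniformly. If $(\Delta, r) \in \HB_*(\Gamma, \vk)^0$, then failing to belong to $\Max r(\Gamma, \vk)^0 = T(\Min *(\Gamma, \vk)^0)$ means $(\Delta, r)$ is not the top of its $B$-fiber; since $B$ is an iterate of $s$, there must be a strictly higher element $(\Psi', r') \in \HB_{r'}$, with $r' > r$, whose $s$-chain passes through $(\Delta, r)$, and applying $s$ the appropriate number of times produces the desired $(\Psi, r+1)$. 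Otherwise $(\Delta, r) \notin \HB_*(\Gamma, \vk)^0$, which by the description of the complement as $\bigcap_i \im(s^i)$ guarantees $s$-preimages at every level above $r$, and in particular at level $r+1$.

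Given such a $(\Psi, r+1)$, the relation $\Delta \in \Phi(\Psi)$ is immediate from the definition of $s$: the equality $s((\Psi, r+1)) = (\Delta, r)$ selects $\Delta$ as the component of $\red{p^r}(\Psi, \vk)$ containing a minimum-weight vertex of $\Psi$, which is exactly what membership in $\Phi(\Psi)$ requires. For any $\Omega \in \Phi(\Psi) \setminus \{\Delta\}$ I would argue by contradiction that $(\Omega, r)$ has no $s$-preimage at level $r+1$. If there were $(\Psi'', r+1) \in \HB_{r+1}$ with $s((\Psi'', r+1)) = (\Omega, r)$, then $\Omega \subset \Psi''$; but also $\Omega \subset \Psi$, and both $\Psi$ and $\Psi''$ are connected components of $\red{p^{r+1}}(\Gamma, \vk)$, so sharing all the vertices of $\Omega$ they must coincide. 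This would force $s((\Psi, r+1))$ to equal $(\Omega, r)$, contradicting $s((\Psi, r+1)) = (\Delta, r)$ together with $\Omega \neq \Delta$. Once no such preimage exists, $(\Omega, r)$ falls outside $\bigcap_i \im(s^i)$ and so lies in $\HB_*(\Gamma, \vk)^0$; being then the top of its $B$-fiber, it belongs to $\Max r(\Gamma, \vk)^0$.

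The main obstacle is the first step: one has to certify that the failure of maximality is witnessed at precisely one level above $r$, not merely at some higher level, and the two cases distinguished by membership in $\HB_*(\Gamma, \vk)^0$ must be treated by somewhat different arguments, corresponding to the finite and ``infinite bipartite'' parts of the fundamental forest. The remaining steps are routine manipulations of the definitions, together with the observation that distinct connected components of a graph cannot share a vertex.
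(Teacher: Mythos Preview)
Your proposal is correct and follows essentially the same approach as the paper. The paper's proof is extremely terse and rests on the single implicit equivalence that $(\Delta,r)\in \Max r(\Gamma,\vk)^0$ if and only if $(\Delta,r)$ is \emph{not} in the image of $s$; both implications of this equivalence are used (the forward one to produce $\Psi$, the backward one for each sibling $\Omega$). You unpack both directions with more care: the case split on membership in $\HB_*(\Gamma,\vk)^0$ for the first direction, and the component-uniqueness argument for the second, are exactly the details the paper leaves to the reader.
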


The final choice we want to do is a map  we will use to prove an injectivity statement later. 
\begin{defn} 
  The witness map is a map $w:B(\Min * (\Gamma,\vk))\to V(\Gamma)$ which for each 
$\Delta \in B(\Min * (\Gamma,\vk))$ chooses a vertex $w(\Delta)\in V(\Delta)$ such that
$k_{w(\Delta)}=m(\Delta)$.
\end{defn}

\begin{remark}
  \label{witness}
   By the definition of $B$, we easily see that for any $\Delta\in \FS(\Gamma,\vk)$ the vertex
   $w(B(\Delta))$ is a vertex in $\Delta$ such that $k_v=m(\Delta)$.  In this case we also say that $w(B(\Delta))$ is a witness for $\Delta$.  In particular, this is true if $\Delta\in \Max *(\Gamma,\vk)^0$. 
  Since the elements of $B(\Min *(\Gamma,\vk))$ are disjoint, the witness map restricts to an injective map $w:\Max *(\Gamma,\vk)^0\to V(\Gamma)$. Moreover, if $\Gamma$ is bipartite, the witness of $B(\Gamma)$ is not contained int $w(\Max *(\Gamma,\vk)^0)$. 
\end{remark}
\newpage
We draw two examples in order to explain the definitions above.

\includegraphics[height=4cm]{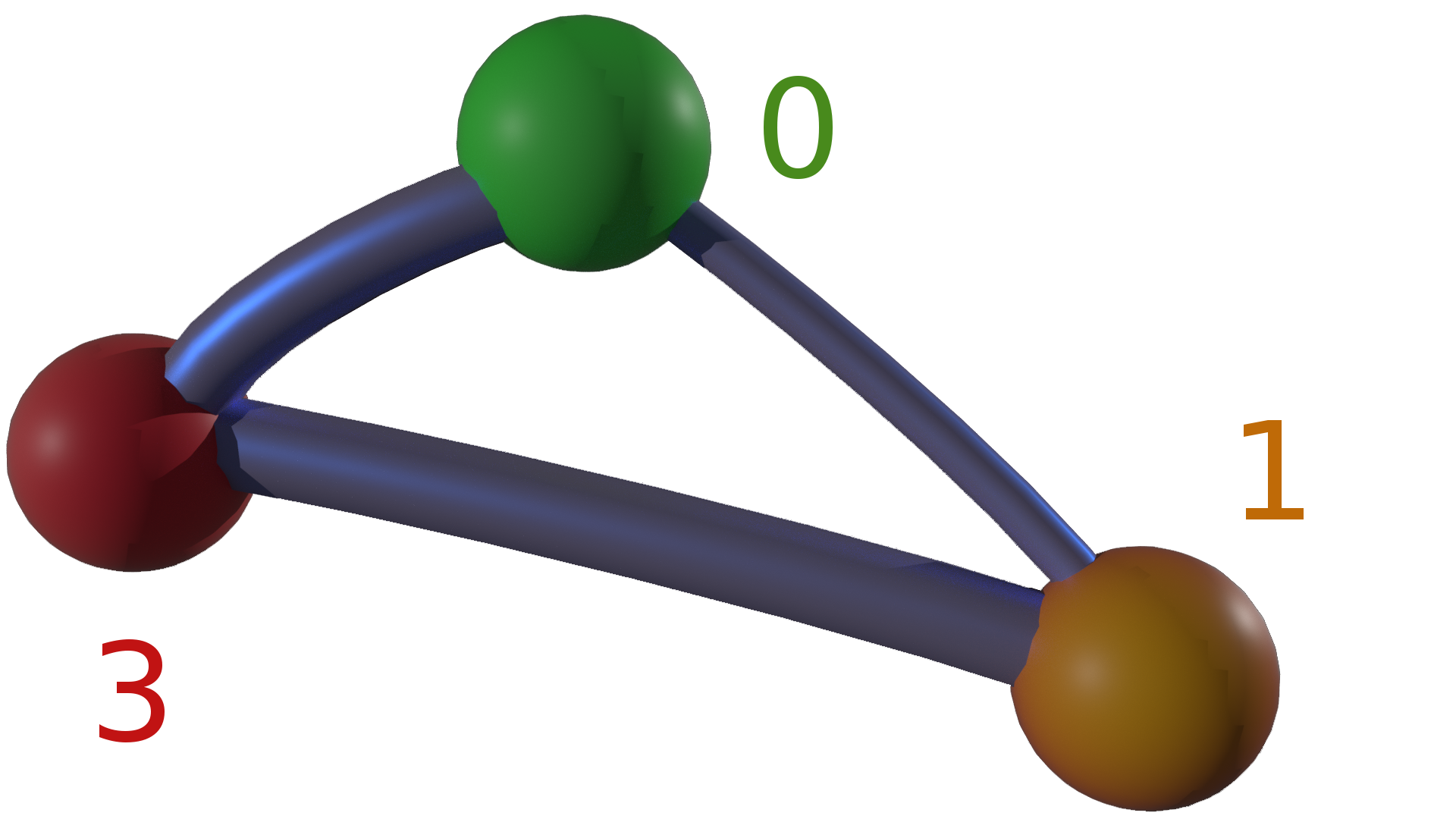}
\includegraphics[height=4cm]{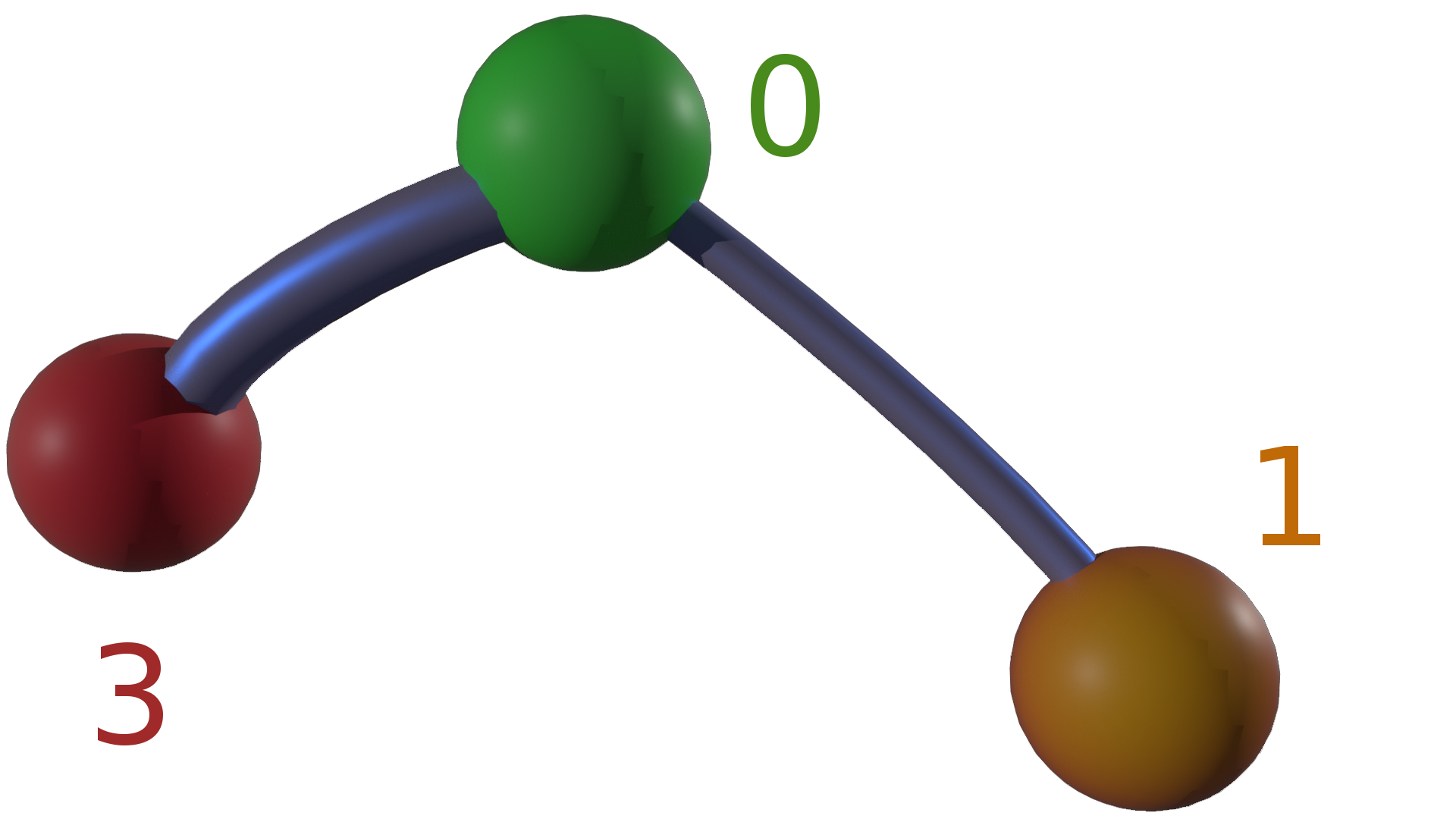}
\begin{tikzpicture}
  \begin{scope}[every node/.style={circle,thick,draw}]
    \node (TOP) at (2,0) {(RGB,4)};
    \node (RED) at (0,-2) {(R,3)};
    \node (GREENBROWN_TOP) at (4,-2){(GB,3)};
    \node (GREENBROWN_BOTTOM) at (4,-5){(GB,2)};
    \node (GREEN) at (6,-7){(G,1)};
  \end{scope}
\path[red, very thick] (TOP) edge node {}  (RED);  
  \begin{scope}[>={Stealth[black]},
    every node/.style={fill=white,circle},
    every edge/.style={draw=red,very thick}
    ]
    \path[->] (TOP) edge node {$s$} (GREENBROWN_TOP);
    \path[->] (GREENBROWN_TOP) edge node {$s$} (GREENBROWN_BOTTOM);
    \path[->] (GREENBROWN_BOTTOM) edge node {$s$} (GREEN);
  \end{scope}
  \begin{scope}[every node/.style={circle,thick,draw}]
    \node (OVER_TOP) at (8,3) {(RGB,5)};
    \node (TOP) at (8,0) {(RGB,4)};    
    \node (RED) at (6,-2) {(R,3)};
    \node (GREENBROWN_TOP) at (10,-2){(GB,3)};
    \node (GREENBROWN_BOTTOM) at (10,-5){(GB,2)};
    \node (GREEN) at (12,-7){(G,1)};
  \end{scope}
\path[red, very thick] (TOP) edge node {}  (RED);  
  \begin{scope}[>={Stealth[black]},
    every node/.style={fill=white,circle},
    every edge/.style={draw=red,very thick}
    ]
    \path[->] (OVER_TOP) edge node {$s$} (TOP);
    \path[->] (TOP) edge node {$s$} (GREENBROWN_TOP);
    \path[->] (GREENBROWN_TOP) edge node {$s$} (GREENBROWN_BOTTOM);
    \path[->] (GREENBROWN_BOTTOM) edge node {$s$} (GREEN);
  \end{scope}  
\end{tikzpicture}

The left graph $\Gamma_0$ is the complete graph on the vertices $R,G,B$. It is not bipartite. The picture shows the odd fundamental forest, corresponding to the weights $v_R=p^3, v_G=1,v_B=p$ for an odd prime $p$. The fundamental forest contains a single tree containing five vertices. There is only one possible choice for $s$, which is shown in the picture above. The set $\HB_*(\Gamma_0,\vk)^0$ consists of all the vertices of the fundamental forest. $\Min *(\Gamma,\vk)$ is the set $\{(R,3),(G;3)\}$. and  $\Max *(\Gamma_0,\vk)^0$ is the set $\{(R,3),(RGB;4)\}$. The map
$T$ is given by $T(R,3)=(R,3)$ and $T(G,1)=(RGB,4)$. 
The witness for $(R,3)$ is $R$, and the witness for $(RGB,4)$ is $G$.

The right graph $\Gamma_1$ has the same vertices as $\Gamma_0$. We use the same vertex weights $\vk$. But this graph is bipartite, and the fundamental forest has infinitely many vertices of the form $(\Gamma_0,r)$. Obviously, not all of those are shown in the picture.   The set $\Min *(\Gamma_1,\vk)$ is still $\{(R,3),(G,3)\}$, but the set $\HB_*(\Gamma_1,\vk)^0$ only consists of $(R,3)$. The set $\Min *(\Gamma_1,\vk)^0$ is now $\{(R,3)\}$, and $T(R,3)=(R,3)$, so that $\Max *(\Gamma_0,\vk)^0$ is $\{(R,3)\}$.  The witness of $B(R)$ is different from the witness of $B(\Gamma_1)$.

\subsection{The fundamental chain complex}

The fundamental class of $x\in \HB_r(\Gamma,\vk)$  defines a cochain $\fund x :=\fund{\Delta(x)}\in C^0(\Gamma,\vk)$. 
For any $x$, we have that
\[
\fund x =\sum_{y\in \Phi(x)} \fund y.
\]
 
To each vertex $x=(\Delta,r)$ in the fundamental forest we associate a weight 
$m(x)$, which is defined as the $p$-valuation of the greatest common
divisor of $k_v$ for $v\in V(\Delta(x))$. If $y$ is another vertex which is smaller than $x$ in the partial ordering, then $m(x)\leq m(y)$. Similarly, we define $\funddivZ{x}$ so that
$p^{m(x)}\funddivZ{x}=\fund{x}$. 

For the rest of this section, we will fix a prime $p$. We will be concerned with the $p$-primary part of 
the torsion in $H^1(\Gamma,\vk)$. This means that we can as well assume that each weight is a power of $p$, and we will occasionally
write the weights as $\vk=p^{\lao k}=\{p^{{\la kv}}\}_{v\in V(\Gamma)}$, where  each $\la kv$ is a non-negative integer.
In particular, all of the the torsion in the graph cohomology groups is $p$ primary torsion. 

The strategy is to define a chain complex $\FK^*(\Gamma,\vk)$ that only depends on $p$ and the fundamental forest. 
Then we will show that the cohomology $H^1(\FK(\Gamma,\vk))$ equals the torsion of the
graph cohomology.

The cochain groups $\FK^*(\Gamma,\vk)$ are free Abelian groups generated by certain symbols. 

  The chain complex $\FK^{*}(\Gamma,\vk)$ is concentrated in the degrees
$-1,0,1$. There is a case distinction between the case of a bipartite component $\Delta\subset \Gamma$, and the case that
$\Delta$ is not a bipartite component, that is $\Delta\in \FS(\Gamma,\vk)^f$.
Recall that if $\Phi(\Delta)$ is non-empty, the number  $r^L(\Delta)$ is the common $r(\Omega)$ for all $\Omega\in \Phi(\Delta)$.
Since $r(\Delta)> r(\Omega) > m(\Delta)$, we have that
$r(\Delta)> r^L(\Delta)>m(\Delta)$. 
\begin{itemize}
\item $\FK^{-1}(\Gamma,\vk)$ is freely generated by symbols $\rho_{-1}(\Delta)$ for $\Delta\in \FS(\Gamma,\vk)\setminus P(\Min *(\Gamma,\vk))$.
\item $\FK^{0}(\Gamma,\vk)$ is freely generated by symbols $\rho_{0}(\Delta)$ for $\Delta\in \FS(\Gamma,\vk)\setminus P(\Min *(\Gamma,\vk))$, 
  together with symbols $\alpha_0(\Delta)$ for $\Delta\in \FS(\Gamma,\vk)$.
\item $\FK^{1}(\Gamma,\vk)$ is generated by symbols 
  $\alpha_1(\Delta)$ for all $\Delta\in \FS(\Gamma,\vk)$. If
  $\Delta\not\in \FS(\Gamma,\vk)^f$, we 
  divide out by the relation  $\alpha_1(\Delta)=0$.
\end{itemize}
Here, the injection $P:\Min *(\Gamma,\vk)\to \FS(\Gamma,\vk)$ is as in remark~\ref{inclusions}.
The boundary maps are given by

\begin{align*}
  d(\rho_{-1}(\Delta))&=\alpha_0(\Delta)-p^{r^L(\Delta)-m(\Delta)}\rho_0(\Delta)-\sum_{\Omega\in \Phi(\Delta)} p^{m(\Omega)-m(\Delta)}\alpha_0(\Omega),\\
 d(\rho_0(\Delta)&=p^{r(\Delta)-r^L(\Delta)}\alpha_1(\Delta)-\sum_{\Omega\in \Phi(\Delta)} \alpha_1(\Omega),\\
  d(\alpha_0(\Delta))&=   p^{r(\Delta)-m(\Delta)}\alpha_1(\Delta)\\
 d(\alpha_1(\Delta))&=0.   
\end{align*}
We check that we have defined a co-chain complex:

\begin{align*}
  dd(\rho_{-1}\Delta)&=d\left( \alpha_0(\Delta)-p^{r^L(\Delta)-m(\Delta)}\rho_0(\Delta)-\sum_{\Omega\in \Phi(\Delta)} p^{m(\Omega)-m(\Delta)}\alpha_0(\Omega)\right)\\
&=p^{r(\Delta)-m(\Delta)}\alpha_1(\Delta)-p^{r(\Delta)-m(\Delta)}\alpha_1(\Delta)+\sum_{\Omega\in \Phi(\Delta)} p^{r(\Omega) -m(\Delta)}\alpha_1(\Omega)\\
&\phantom{X}-\sum_{\Omega\in \Phi(\Delta)} p^{r(\Omega)-m(\Delta)}\alpha_1(\Omega)\\
&=0.
\end{align*}

\begin{defn}
  The fundamental complex is the complex $\FK^*(\Gamma,\vk)$ defined above. 
\end{defn}

  
We are ready to compute the cohomology of the complex $\FK^*(\Gamma,\vk)$.

\begin{lemma}
  \label{order.HFK}
Assume that $\Gamma$ is connected.   
The only non-trivial cohomology groups of   $\FK^{*}(\Gamma,\vk)$ are in degrees 0 and 1.
\[
  H^0(\FK^{*}(\Gamma,\vk))=
  \begin{cases}
    \ZZ&\text{ with generator $\alpha_0(\Gamma)$ if $\Gamma$ is bipartite,}\\
    0&\text{ if $\Gamma$ is not bipartite}.\\
  \end{cases}
\]
The order of the group $H^1(\FK^{*}(\Gamma,\vk))$ is $p^N$ where $N$ is the number of
elements in the set $\HB_*(\Gamma,\vk)^0$. 
 \begin{proof}

  Let $A^*\subset \FK^{*}(\Gamma,\vk)$ be the subcomplex generated by
  the classes $\alpha_0(\Delta)$ and $\alpha_1(\Delta)$ for
  $\Delta\in \FS(\Gamma,\vk)$. 
The cohomology group $H^0(A^*)$  is 0 if $\Gamma$ is not bipartite, and isomorphic to $\ZZ$ generated by
$\alpha_0(\Gamma)$ if  $\Gamma$ is bipartite. 
The cohomology  $H^1(A^*)$ 
is a direct sum of cyclic groups, indexed by
$\Delta\in \FS(\Gamma,\vk)^f$. The summand indexed by $\Delta$ is isomorphic to $\ZZ/p^{r(\Delta)-m(\Delta)}$, generated by
$\alpha_1(\Delta)$. 

  There is a short exact sequence of
chain complexes
\[
0 \to A^*\to \FK^{*}(\Gamma,\vk) \xrightarrow{\pi} R^*\to 0.
\]
$R^*$ is freely generated by the classes $\pi(\rho_{-1}(\Delta))$ and $\pi(\rho_0(\Delta))$ for $\Delta\in \FS(\Gamma,\vk)\setminus P(\Min *(\Gamma,\vk))$.
The differential in $R^*$ is given by $d(\pi(\rho_{-1}(\Delta)))=-p^{r^L(\Delta)-r(\Delta)}\pi(\rho_0(\Delta))$.
The cohomology of $R^*$ is concentrated in degree 0. It is a direct sum of cyclic groups
indexed by $\Delta\in \FS(\Gamma,\vk)\setminus P(\Min *(\Gamma,\vk))$. The summand indexed by $\Delta$ is isomorphic to $\ZZ/p^{r(\Delta)-m(\Delta)}$, generated by
$\rho_0(\Delta)$. 
The long exact sequence of cohomology groups takes the form
\[
0\to H^0(A^*) \to H^0(\FK^{*}(\Gamma,\vk)) \xrightarrow{\pi^*} H^0(R^*) \xrightarrow\delta H^1(A^*) \to  H^1(\FK^{*}(\Gamma,\vk)) \to 0.
\]  
We see that $H^{-1}(\FK^{*}(\Gamma,\vk))\cong 0$.

We can now identify the boundary map $\delta$ as the following map:
\begin{align*}
  \delta:\bigoplus_{\Delta\in \FS(\Gamma,\vk)\setminus P(\Min *(\Gamma,\vk))}& \ZZ/p^{r^L(\Delta)-m(\Delta)} \to \bigoplus_{\Delta\in \FS(\Gamma,\vk)^f} \ZZ/p^{r(\Delta)-m(\Delta)}\\ 
      \delta(\pi(\rho_0(\Delta))) \quad&=\quad p^{r(\Delta)-r^L(\Delta)}\alpha_1(\Delta) - \sum_{\Omega\in \Phi(\Delta)} \alpha_1(\Omega). 
\end{align*}
As before, if $\Delta=\Gamma$ we interpret the term $p^{r(\Delta)-r^L(\Delta)}\alpha_1(\Delta)$ as 0.

We claim that the map $\delta$ is injective. To show this, for each
$\Delta\in \FS(\Gamma,\vk)\setminus P(\Min *(\Gamma,\vk))$
we choose
$s(\Delta)\in \Phi(\Delta)$ such that $m(s(\Delta))=m(\Delta)$.
Then $s(\Delta)\in \FS(\Gamma,\vk)^0$, and $r(s(\Delta))=r^L(\Delta)$.
Order the set $\FS(\Gamma,\vk)\setminus P(\Min *(\Gamma,\vk))$
so that $\Delta > s(\Delta)$. 
The projection of $\delta(\pi(\rho_0(\Delta)))$ to the component
indexed by $s(\Omega)$ is
$0$ if $\Delta<\Omega$
, and
$-1\in \ZZ/p^{r(s(\Omega))-m(s(\Omega))}=\ZZ/p^{r^L(\Delta)-m(\Delta)}$  if $\Delta=\Omega$.
A standard filtration argument now shows the injectivity of $\delta$. 

We conclude from this and from the exact sequence above  that the map
$H^0(A^*)\to H^0(\FK^{*}(\Gamma,\vk))$ is an isomorphism. The statements about
$H^0(\FK^{*}(\Gamma,\vk))$ follows from this.

It also follows from the exact sequence that $H^1(\FK^{*}(\Gamma,\vk))$
is isomorphic to the
cokernel of $\delta$. 
Since $\delta$ is injective, the cardinality of the group $\mathrm{coker}(\delta)$ is the quotient of the
cardinality of the target of $\delta$ and the cardinality of the source of $\delta$.
That is, the cardinality of $\mathrm{coker}(\delta)$ is $p^N$
where

\begin{align*}
  N&=\sum_{\Delta\in \FS(\Gamma,\vk)^f}(r(\Delta)-m(\Delta))
     -\sum_{\Delta\in \FS(\Gamma,\vk)\setminus P(\Min *(\Gamma,\vk))}{(r^L(\Delta)-m(\Delta))} 
\end{align*}
Now we notice that $\FS(\Gamma,\vk)^0\subset\FS(\Gamma,\vk)^f$
and that $\FS(\Gamma,\vk)\setminus P(\Min *(\Gamma,\vk))$ is the disjoint union of
$\FS(\Gamma,\vk)^0\setminus P(\Min *(\Gamma,\vk)^0)$ and
\[
  X=\left(\FS(\Gamma,\vk)\setminus P(\Min *(\Gamma,\vk))\right)\setminus
  \left(\FS(\Gamma,\vk)^0\setminus P(\Min *(\Gamma,\vk))^0\right)
\]
We can rewrite the sum above as $N=A+B$ where
\begin{align*}
  A&=\sum_{\Delta\in \FS(\Gamma,\vk)^0\setminus P(\Min *(\Gamma,\vk)^0)}{(r(\Delta)-r^L(\Delta))}
       +\sum_{\Delta\in P(\Min *(\Gamma,\vk)^0)}(r(\Delta)-m(\Delta))\\
    B&=\sum_{\Delta\in \FS(\Gamma,\vk)^f\setminus \FS(\Gamma,\vk)^0}(r(\Delta)-m(\Delta))
     -\sum_{\Delta\in X}{r^L(\Delta)-m(\Delta)}  
\end{align*}
There map $s$ restricts to a bijection $s:X\to \FS(\Gamma,\vk)^f\setminus \FS(\Gamma,\vk)^0$. Since
$r^L(\Delta)=r(s(\Delta))$ and $m(s(\Delta))=m(\Delta)$, this shows that $B=0$.

Using remark~\ref{rL} we see that $A$ is the cardinality of
$\cup_{\Delta\in \FS(\Gamma,\vk)^0} P^{-1}(\Delta)=\HB_*(\Gamma,\vk)^0$.
This completes the proof.
\end{proof}
\end{lemma}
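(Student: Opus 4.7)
The plan is to exploit the natural two-step filtration of $\FK^*(\Gamma,\vk)$ in which the $\alpha$-symbols form a subcomplex and the $\rho$-symbols appear only in the quotient. Concretely, I would let $A^*\subset \FK^*(\Gamma,\vk)$ be the subcomplex generated by all $\alpha_0(\Delta)$ and $\alpha_1(\Delta)$, and let $R^*$ be the quotient. The short exact sequence $0\to A^*\to \FK^*(\Gamma,\vk)\to R^*\to 0$ then reduces the computation to understanding three pieces: $H^*(A^*)$, $H^*(R^*)$, and the connecting map $\delta\colon H^0(R^*)\to H^1(A^*)$.

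First I would compute $H^*(A^*)$. The only nonzero differential in $A^*$ is $d(\alpha_0(\Delta))=p^{r(\Delta)-m(\Delta)}\alpha_1(\Delta)$, with the convention that the right-hand side is zero when $\Delta$ is a bipartite component of $\Gamma$ (i.e.\ when $\alpha_1(\Delta)=0$ by definition of $\FK^1$). Since $\Gamma$ is assumed connected, this immediately gives $H^0(A^*)=\ZZ$ generated by $\alpha_0(\Gamma)$ when $\Gamma$ is bipartite and $H^0(A^*)=0$ otherwise, while $H^1(A^*)$ splits as a direct sum indexed by $\Delta\in \FS(\Gamma,\vk)^f$ of cyclic groups generated by $\alpha_1(\Delta)$ of the appropriate order. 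Next, the complex $R^*$ has only the differential sending $\pi(\rho_{-1}(\Delta))$ to a unit multiple of $p^{r^L(\Delta)-m(\Delta)}\pi(\rho_0(\Delta))$ for $\Delta\in \FS(\Gamma,\vk)\setminus P(\Min *(\Gamma,\vk))$, so its cohomology is concentrated in degree $0$ and is a direct sum of cyclic groups.

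Plugging these into the long exact sequence produces
\[
0\to H^0(A^*)\to H^0(\FK^*(\Gamma,\vk))\to H^0(R^*)\xrightarrow{\delta} H^1(A^*)\to H^1(\FK^*(\Gamma,\vk))\to 0,
\]
and the heart of the argument is to show that $\delta$ is injective. The formula for $\delta$ is read off from the definition of $d(\rho_0)$: it sends the class of $\pi(\rho_0(\Delta))$ to $p^{r(\Delta)-r^L(\Delta)}\alpha_1(\Delta)-\sum_{\Omega\in \Phi(\Delta)}\alpha_1(\Omega)$, where the first term vanishes when $\Delta$ is a bipartite component. To prove injectivity I would use the map $s$ chosen in the construction of the fundamental forest, which assigns to each $\Delta\in \FS(\Gamma,\vk)\setminus P(\Min *(\Gamma,\vk))$ a distinguished $\Omega=s(\Delta)\in \Phi(\Delta)$ with $m(s(\Delta))=m(\Delta)$ (hence $r(s(\Delta))=r^L(\Delta)$). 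This gives a natural partial order on the index set (namely $\Delta>s(\Delta)$), and a standard triangular/filtration argument shows that if one projects $\delta$ onto the $\alpha_1(s(\Omega))$-summand, the ``diagonal'' coefficient at $\Delta=\Omega$ is $-1$ while all coefficients from strictly larger indices vanish. This triangular structure forces injectivity.

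Injectivity of $\delta$ gives both halves of the lemma. The $H^0$ statement is immediate. For $H^1$, since $|\operatorname{coker}(\delta)|$ equals the ratio of orders of target and source, the exponent $N$ of $p$ in this ratio is
\[
N=\sum_{\Delta\in \FS(\Gamma,\vk)^f}(r(\Delta)-m(\Delta))-\sum_{\Delta\in \FS(\Gamma,\vk)\setminus P(\Min *(\Gamma,\vk))}(r^L(\Delta)-m(\Delta)).
\]
The final, and in my view main, obstacle is a somewhat delicate combinatorial bookkeeping: I would split the second sum according to whether $\Delta$ lies in $\FS(\Gamma,\vk)^0$ or in its complement $X$, and use that $s$ restricts to a bijection $X\to \FS(\Gamma,\vk)^f\setminus \FS(\Gamma,\vk)^0$ preserving $m$ and turning $r^L$ into $r$. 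This cancels all contributions coming from graphs outside $\FS(\Gamma,\vk)^0$, leaving only terms indexed by $\FS(\Gamma,\vk)^0$. Using Remark \ref{rL} to recognize each remaining contribution $r(\Delta)-r^L(\Delta)$ (or $r(\Delta)-m(\Delta)$ for minimal $\Delta$) as the cardinality of the fiber $P^{-1}(\Delta)\subset \HB_*(\Gamma,\vk)^0$, I would conclude $N=|\HB_*(\Gamma,\vk)^0|$, completing the proof.
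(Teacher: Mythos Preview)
Your proposal is correct and follows essentially the same approach as the paper: the same $A^*/R^*$ filtration, the same identification of $\delta$, the same triangularity argument via the map $s$ to prove injectivity, and the same splitting of the index set into $\FS(\Gamma,\vk)^0$ and its complement $X$ (with the bijection $s:X\to \FS(\Gamma,\vk)^f\setminus \FS(\Gamma,\vk)^0$) to reduce the order count to $|\HB_*(\Gamma,\vk)^0|$. Your description of the $R^*$ differential with exponent $r^L(\Delta)-m(\Delta)$ is in fact cleaner than the paper's, which contains a couple of typos at that spot.
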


We can also give a description of the group $H^1(\FK(\Gamma,\vk)$ up to isomorphism.

We define a map
\[
f:\bigoplus_{\Delta\in P(\Min *(\Gamma,\vk)^0)} \ZZ/p^{r(\Delta)-m(\Delta)} \to \bigoplus_{\Delta\in \FS(\Gamma,\vk)^0} \ZZ/p^{r(\Delta)-m(\Delta)} \to \mathrm{coker}(\delta).
\]
as follows. Recall that for every $\Delta\in \FS(\Gamma,\vk)$, the element $TP(\Delta)\in \Max *(\Gamma,\vk)$ is the maximal element in $\HB_*(\Gamma,\vk)$ such that $TP(\Delta)\in B^{-1}(P(\Delta))$. Let $\{n_\Delta\}\in \bigoplus_{\Delta\in P(\Min *(\Gamma,\vk)^0)} \ZZ/p^{r(\Delta)-m(\Delta)}$ be the vector with components $n_\Delta$. Then
\[
  f(\{n_\Delta\})=\sum_{\Delta\in P(\Min *(\Gamma,\vk)^0)} n_{\Delta} \alpha_1(TP(\Delta)).
\]
Note that every element $\alpha_1(\Delta)$ for $\Delta\in P(\Max *(\Gamma,\vk))$ is in the image of $f$.
\begin{lemma} The map $f$ is an isomorphism.
  \label{computation.HFK}
  \begin{proof}
The cardinality of $\bigoplus_{\Delta\in P(\Min *(\Gamma,\vk)^0)} \ZZ/p^{r(\Delta)-m(\Delta)}$ is
$p^M$ where
$M=\sum_{\Delta\in P(\Min *(\Gamma,\ vk))}r(\Delta)-m(\Delta)$.
But
$r(\Delta)-m(\Delta)$ is also the cardinality of $B^{-1}(B(\Delta))$, so that
\[    
  M=\sum_{\Delta\in \Max *(\Gamma,\vk)}\mid B^{-1}(B(\Delta))\mid=
  \mid
    B^{-1} \Min *(\Gamma,\vk)
  \mid
  =\mid \FS(\Gamma,\vk)^0\mid
\]
It follows this and from lemma \ref{order.HFK} that the source and the target of $f$ have the same order. In order to prove that $f$ is an isomorphism, it suffices to show that $f$ is surjective.

  Assume inductively that the image of the composite
  \[
   \bigoplus_{\Delta\in \FS(\Gamma,\vk)^0\mid r(\Delta)>n} \ZZ/p^{r(\Delta)-m(\Delta)}\subset \bigoplus_{\Delta\in \FS(\Gamma,\vk)^0} \ZZ/p^{r(\Delta)-m(\Delta)}\to \mathrm{coker}(\delta)
 \]
 is contained in the image of $f$. We have to show the downwards induction step that if $\Delta\in \FS(\Gamma,\vk)^0$ and $r(\Delta)= n$,
 then the image of $\alpha_1(\Delta)$ in $\mathrm{coker}(\Delta)$ is also contained in the image of $f$. 
The induction start is trivial since $\FS(\Gamma,\vk)^0$ is a finite set.
We have already noticed that if   $\Delta\in \Max *(\Gamma,\vk)^0\subset \FS(\Gamma,\vk)^0$ then $\alpha_1(\Delta)$ is in the image of $f$.
 If $\Delta\not\in \Max *(\Gamma,\vk)$, we use lemma~\ref{NotMax}. We can write $(\Delta,n)=s(x)$ for some
 $(\Psi,n+1)\in \HB_{n+1}(\Gamma,\vk)$, and if $\Omega\in \Phi(\Psi)$ but $\Omega\not=\Delta$, then $\Omega\in \Max *(\Gamma,\vk)$.
    In $\mathrm{coker}(\delta)$ we have the relation
 \[
   p^{r(\Delta)-r^L(\Delta)}[\alpha_1(\Psi)]-\sum_{\Omega\in \Phi(\Psi)\mid \Omega\not= \Delta}[\alpha_1(\Omega)]= [\alpha_1(\Delta)].
 \]
 The left hand side is in the image of $f$ by the induction assumption
and since each $\Omega\in \Max *(\Gamma,\vk)$. 
  \end{proof}  
\end{lemma}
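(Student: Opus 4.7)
The plan is to first show that the source and target of $f$ have the same finite cardinality, so that it suffices to verify surjectivity. Then I would establish surjectivity by a downward induction on $r(\Delta)$ using the relations that define $\mathrm{coker}(\delta)$, with the map $T:\Min *(\Gamma,\vk)^0 \to \Max *(\Gamma,\vk)^0$ and lemma~\ref{NotMax} as the key combinatorial tools.

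For the cardinality comparison, I would observe that for each $\Delta\in P(\Min *(\Gamma,\vk)^0)$, the fiber $B^{-1}(\Delta)\subset \HB_*(\Gamma,\vk)^0$ has exactly $r(T(\Delta))-m(\Delta)$ elements, so summing over $\Delta\in P(\Min *(\Gamma,\vk)^0)$ one gets $|\HB_*(\Gamma,\vk)^0|$. By lemma~\ref{order.HFK} this is also the $p$-valuation of $|\mathrm{coker}(\delta)|$, so source and target of $f$ have the same order. Surjectivity then implies bijectivity.

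To prove surjectivity, I would argue by downward induction on $n=r(\Delta)$ over the finite set $\FS(\Gamma,\vk)^0$, proving the statement that $[\alpha_1(\Delta)]\in \mathrm{coker}(\delta)$ lies in the image of $f$ for every $\Delta\in \FS(\Gamma,\vk)^0$ with $r(\Delta)\geq n$. The induction starts at the maximal value of $r(\Delta)$, where every such $\Delta$ lies in $\Max *(\Gamma,\vk)^0$; for these $\Delta$, $[\alpha_1(\Delta)]=[\alpha_1(TP(B(\Delta)))]$ is the image under $f$ of the standard generator supported at $B(\Delta)$. For the induction step, fix $\Delta\in \FS(\Gamma,\vk)^0$ with $r(\Delta)=n$ and assume $\Delta\notin \Max *(\Gamma,\vk)^0$. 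By lemma~\ref{NotMax} there exists $(\Psi,n+1)\in \HB_{n+1}(\Gamma,\vk)$ with $\Delta\in \Phi(\Psi)$ and every other $\Omega\in \Phi(\Psi)\setminus\{\Delta\}$ lying in $\Max *(\Gamma,\vk)^0$. The differential $d(\rho_0(\Psi))$ in $\FK^*(\Gamma,\vk)$ then gives the relation
\[
[\alpha_1(\Delta)]=p^{r(\Psi)-r^L(\Psi)}[\alpha_1(\Psi)]-\sum_{\Omega\in \Phi(\Psi),\,\Omega\neq \Delta}[\alpha_1(\Omega)]\in \mathrm{coker}(\delta).
\]
The first term on the right involves $\alpha_1(\Psi)$ with $r(\Psi)=n+1$, hence lies in the image of $f$ by the induction hypothesis, while each remaining $\alpha_1(\Omega)$ with $\Omega\in\Max *(\Gamma,\vk)^0$ does so by the base case argument. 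This completes the induction.

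The main technical hurdle is coordinating the bookkeeping in the counting step: one has to confirm that the fibers of $B$ restricted to $\HB_*(\Gamma,\vk)^0$ really partition this set with fiber sizes $r(T(\Delta))-m(\Delta)$, and relate this to the cardinality $p^N$ from lemma~\ref{order.HFK}. The inductive surjectivity argument itself is clean once lemma~\ref{NotMax} is in hand, because that lemma provides exactly the relation needed to trade a non-maximal $\alpha_1(\Delta)$ for classes at strictly higher $r$-level together with classes already known to be images of $f$.
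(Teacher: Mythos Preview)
Your proposal is correct and follows essentially the same route as the paper's own proof: a cardinality comparison via the fibers of $B$ over $P(\Min *(\Gamma,\vk)^0)$ together with lemma~\ref{order.HFK}, followed by downward induction on $r(\Delta)$ using lemma~\ref{NotMax} and the relation coming from $d(\rho_0(\Psi))$. Your exponent $p^{r(\Psi)-r^L(\Psi)}$ in the displayed relation is in fact the correct one (it comes directly from the formula for $d(\rho_0(\Psi))$); the paper's text has $r(\Delta)-r^L(\Delta)$ there, which appears to be a typo.
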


\subsection{The fundamental complex and graph cohomology}
The next step is to use $\FK^*(\Gamma,\vk)$ to study $C^*(\Gamma,\vk)$
We define a map $\chi:\FK^*(\Gamma,\vk)\to C^*(\Gamma,\vk)$ by
$\chi(\rho_{-1}(\Delta))=0$, $\chi(\rho_{0}(\Delta))=0$,
$\chi(\alpha_{0}(\Delta))=\funddivZ{\Delta}$ and $\chi(\alpha_{1}(\Delta))=p^{-r}d\fund{\Delta}$ for $\Delta\in \FS(\Gamma,\vk)$.
If $\Gamma$ is bipartite, we define 
$ \phi(\alpha_0(\Gamma))=\funddivZ \Gamma$.
It is easy to check that $\chi$ is a chain map, inducing a map
$\chi^*$ on cohomology.  

\begin{lemma}
\label{chi}
  $\chi^*:H^0(\FK^*(\Gamma,\vk);\ZZ/p)\to H^0((\Gamma,\vk);\ZZ/p)$ is injective.
If $\Gamma$ is not bipartite,
  $\chi^*:H^0(\FK^*(\Gamma,\vk);\ZZ/p^s)\to H^0((\Gamma,\vk);\ZZ/p^s)$ is surjective for all $s$.
If $\Gamma$ is bipartite,
\[
\chi^*:H^0(\FK^*(\Gamma,\vk);\ZZ/p^s)\to H^0((\Gamma,\vk);\ZZ/p^s)
\]
 is injective for $s=1$ and surjective for all $s$.

  \begin{proof}
    According to the computation of lemma~\ref{computation.HFK},
$H^0(\FK^*(\Gamma,\vk);\ZZ/p)$ is a $\ZZ/p$ -- vector space with a bases
consisting of the classes $\alpha_0(\Delta)$ for $\Delta\in \Max *(\Gamma,\vk)$.
In order to prove injectivity
we have to prove that the classes
$\funddivZ{\Delta}\in C^0(\Gamma,\vk;\ZZ/p)$
for $\Delta\in \Max *(\Gamma,\vk)$ 
are linearly independent.
Let's assume that we have a linear dependence
\[
\sum_{\Delta\in \Max *(\Gamma,\vk)}\lambda_\Delta \funddivZ{\Delta}=0.
\]  
For each $\Delta$ we consider the coefficient of the 
witness $w(\Delta)$ in $\sum_{\Delta\in \Max *(\Gamma,\vk)}\lambda_\Delta \funddivZ{\Delta}$.
According to remark~\ref{witness}, this coefficient is
$\pm \lambda_\Delta$, so that $\lambda_\Delta=0$. This proves the injectivity.
 
The surjectivity is a corollary of theorem~\ref{generation}. According to this theorem, it is 
sufficient to prove that all classes $p^d\funddivZ \Delta\in C^0(\Gamma,\vk;\ZZ/p^s)$ 
for $d\geq s-r(\Delta)+m(\Delta)$ are images of cycles. By the definition of $\chi$, 
$p^d\funddivZ \Delta=\chi(p^d\alpha_0(\Delta))$, so we have to check that
$p^d\alpha_0(\Delta)$ is a cycle. But
$d(p^d\alpha_0(\Delta))=p^{d+r(\Delta)-m(\Delta)}\alpha_1(\Delta)$. Since $s\leq d+r(\Delta)-m(\Delta)$, 
this is indeed trivial modulo $p^s$. This completes the proof of surjectivity.   
\end{proof}
\end{lemma}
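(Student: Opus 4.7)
The plan is to split the statement into surjectivity (for all $s$) and injectivity at $s=1$, handling surjectivity uniformly as a direct consequence of Theorem~\ref{generation} and handling injectivity by identifying a basis of $H^0(\FK^*(\Gamma,\vk);\ZZ/p)$ and then verifying via the witness map that its image in $C^0(\Gamma,\vk;\ZZ/p)$ is linearly independent. For surjectivity, Theorem~\ref{generation} guarantees that $H^0(\Gamma,\vk;\ZZ/p^s)$ is generated by classes $p^d\funddivZ{\Delta}$ where $\Delta$ is a $\ZZ/p^{s-d}$-oriented component of a suitable reduction satisfying $d \geq s-r(\Delta)+m(\Delta)$. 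By the definition of $\chi$ we have $\chi(p^d\alpha_0(\Delta)) = p^d\funddivZ{\Delta}$, and the formula $d(\alpha_0(\Delta)) = p^{r(\Delta)-m(\Delta)}\alpha_1(\Delta)$ gives $d(p^d\alpha_0(\Delta)) = p^{d+r(\Delta)-m(\Delta)}\alpha_1(\Delta) \equiv 0 \pmod{p^s}$, so every generator lifts to a cocycle in $\FK^0\otimes \ZZ/p^s$. This argument works uniformly in both the bipartite and non-bipartite cases.

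For injectivity at $s=1$, my first step is to pin down $H^0(\FK^*(\Gamma,\vk);\ZZ/p)$. Combining Lemma~\ref{order.HFK} and Lemma~\ref{computation.HFK} with the universal coefficient theorem, this group is a $\ZZ/p$-vector space whose dimension equals $\vert\Min *(\Gamma,\vk)^0\vert$ plus the number of bipartite components of $\Gamma$. My candidate basis of representatives consists of the classes $\alpha_0(\Delta)$ for $\Delta$ in the image of $\Max *(\Gamma,\vk)^0$ under $P$, augmented by $\alpha_0(\Gamma_j)$ for each bipartite component $\Gamma_j$; each is automatically a cocycle modulo $p$ because $d(\alpha_0(\Delta)) = p^{r(\Delta)-m(\Delta)}\alpha_1(\Delta)$ vanishes modulo $p$ since $r(\Delta) > m(\Delta)$. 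To verify that these really form a basis I would examine the mod-$p$ reduction of $d(\rho_{-1}(\Delta))$, in which the term $p^{r^L(\Delta)-m(\Delta)}\rho_0(\Delta)$ disappears and the sum $\sum p^{m(\Omega)-m(\Delta)}\alpha_0(\Omega)$ collapses onto those $\Omega\in\Phi(\Delta)$ with $m(\Omega)=m(\Delta)$, producing relations of the form $\alpha_0(\Delta) \equiv \sum_{\Omega\in\Phi(\Delta),\,m(\Omega)=m(\Delta)}\alpha_0(\Omega)$; iterating these relations along the chain-level map $s$ and the retraction $B$ rewrites every $\alpha_0(\Delta)$ in terms of the candidate basis.

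Granting the basis, the witness map from Remark~\ref{witness} finishes the argument cleanly. For any nonzero linear combination $\sum \lambda_\Delta \alpha_0(\Delta)$ of basis elements, its image $\sum\lambda_\Delta\funddivZ{\Delta}\in C^0(\Gamma,\vk;\ZZ/p)$ has coefficient $\pm \lambda_\Delta \cdot k_{w(\Delta)}/p^{m(\Delta)}$ at the witness vertex $w(\Delta)$, a $p$-unit because $k_{w(\Delta)} = p^{m(\Delta)}$ up to a $p$-unit. Since $w$ is injective on $\Max *(\Gamma,\vk)^0$ and the witnesses of bipartite components lie outside $w(\Max *(\Gamma,\vk)^0)$, no other basis graph $\Delta'$ contains $w(\Delta)$, so picking any $\Delta$ with $\lambda_\Delta\not=0$ and reading off the coefficient at $w(\Delta)$ exhibits nonvanishing in $C^0(\Gamma,\vk;\ZZ/p)$. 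The hard part is the middle paragraph: verifying cleanly that the natural candidate really is a basis of $H^0(\FK^*(\Gamma,\vk);\ZZ/p)$ requires chain-level bookkeeping exploiting the structural choices $s$, $T$, $B$ from Section~\ref{structure.forest}, going beyond the cohomology-level information supplied by Lemmas~\ref{order.HFK} and \ref{computation.HFK}.
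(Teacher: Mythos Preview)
Your proposal is correct and follows essentially the same approach as the paper: surjectivity via Theorem~\ref{generation} by lifting each generator $p^d\funddivZ{\Delta}$ to the cocycle $p^d\alpha_0(\Delta)$, and injectivity by identifying a basis of $H^0(\FK^*(\Gamma,\vk);\ZZ/p)$ and checking linear independence of the images via the witness map of Remark~\ref{witness}. The only difference is one of emphasis: the paper dispatches the basis claim by citing Lemma~\ref{computation.HFK} (which, combined with Lemma~\ref{order.HFK} and the universal coefficient/Bockstein sequence, does yield the basis $\{\alpha_0(\Delta):\Delta\in P(\Max *(\Gamma,\vk)^0)\}\cup\{\alpha_0(\Gamma_j)\}$), whereas you propose a direct chain-level verification using the mod-$p$ reduction of $d(\rho_{-1}(\Delta))$ --- both routes are valid and lead to the same witness-map endgame.
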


\begin{corollary}
  \label{injectivity.chi}
  $\chi^*:H^1(\FK^*(\Gamma,\vk))\to H^1((\Gamma,\vk))$ is injective. It's image is the torsion subgroup of $H^1((\Gamma,\vk))$.
  \begin{proof}
    We can without restriction of the generality assume that $\Gamma$ is connected. We first deal with the injectivity statement. It suffices to show that
    $\chi^*$ is injective on the $p$-torsion subgroup (as opposed to the $p$-primary torsion subgroup).
    There is a map of long exact sequences
\[    
   \begin{CD}
   H^0(\FK^*(\Gamma,\vk))@>>>H^0(\FK^*(\Gamma,\vk);\ZZ/p)@>\beta_F >>H^1(\FK^*(\Gamma,\vk))@>{p\cdot}>>H^1(\FK^*(\Gamma,\vk))\\  
@V{\chi^*} V{\cong}V @V{\chi^*}V{\cong}V @V{\chi^*}VV @V{\chi^*}VV\\
   H^0((\Gamma,\vk))@>>>H^0((\Gamma,\vk);\ZZ/p)@>\beta_C >>H^1((\Gamma,\vk))@>{p\cdot}>>H^1((\Gamma,\vk)).\\  
   \end{CD}
 \]
In this diagram, the left vertical map $\chi^*:H^0(\FK^*(\Gamma,\vk))\to H^0((\Gamma,\vk))$ is an isomorphism because of lemma~\ref{computation.HFK}.
 The middle left vertical map $\chi^*:H^0(\FK^*(\Gamma,\vk);\ZZ/p)\to H^0((\Gamma,\vk);\ZZ/p)$ is an isomorphism by lemma~\ref{chi}.
 
 The $p$-torsion subgroup of $H^1(\FK^*(\Gamma,\vk))$ is the image of $\beta_F$, so we only have to prove that $\chi^*$ is injective on that image. 
Assume first that $\Gamma$ is not bipartite. Then $H^0((\Gamma,\vk))=0$, so that the composition
$\beta_C \circ \chi^*:H^0(\FK^*(\Gamma,\vk);\ZZ/p)\to  H^1((\Gamma,\vk))$ is injective.
It follows that $\chi^*$ is injective in $\mathrm{im}(\beta_F)$. 

If $(\Gamma,\vk)$ is connected and bipartite, it follows from the diagram that
$\mathrm{ker}(\beta_C)=\chi^*(\mathrm{ker}(\beta_F)$. Using this and diagram chasing, we
can argue exactly as in the not bipartite case that
 $\chi^*$ is injective on the image of $\beta_F$. The injectivity statement of the lemma now follows as in the non-bipartite case.

So far we have proved that $\chi^*:H^1(\FK^*(\Gamma,\vk))\to H^1((\Gamma,\vk))$ is injective
with image contained in the torsion subgroup of $H^1((\Gamma,\vk))$. To complete the proof
we also need to show that every $p$--primary torsion element of $H^1((\Gamma,\vk))$ is contained in the image. But every $p$--primary torsion element is in the image of some
Bockstein map $\beta^s_C:H^1((\Gamma,\vk);\ZZ/p^s)\to H^1((\Gamma,\vk))$.

By lemma~\ref{chi} the map
$\chi^*:H^0(\FK^*(\Gamma,\vk);\ZZ/p^s)\to H^0((\Gamma,\vk);\ZZ/p^s)$
is surjective for all $s$,
so every $p$-primary torsion element is in the image of
$\beta^s_C\circ \chi^*$. 
It follows from the commutative diagram
\[
\begin{tikzcd}
  H^0(\FK^*(\Gamma,\vk);\ZZ/p^s)\arrow[r,"\beta_F^s"]
\arrow[d,"\chi^*"]
&H^1(\FK^*(\Gamma,\vk))\arrow[d,"\chi^*"]\\  
 H^0((\Gamma,\vk);\ZZ/p^s)\arrow[r,"\beta_C^s"]&H^1((\Gamma,\vk))\\  
\end{tikzcd}
\]
that every $p$--primary torsion element of $H^1((\Gamma,\vk))$ is in the image of
the map $\chi^*:H^1(\FK^*(\Gamma,\vk))\to H^1((\Gamma,\vk))$
\end{proof}
\end{corollary}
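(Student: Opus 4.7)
The plan is to deduce everything from the Bockstein long exact sequences attached to the coefficient sequences $0 \to \ZZ \xrightarrow{p^s} \ZZ \to \ZZ/p^s \to 0$ applied to both $\FK^*(\Gamma,\vk)$ and $C^*(\Gamma,\vk)$, and then to transfer information between the two via the ladder induced by $\chi$. By lemma~\ref{first.properties} I may assume $\Gamma$ is connected. Throughout I use the standing hypothesis of this section that the weights are powers of $p$, so that the torsion of $H^1((\Gamma,\vk))$ coincides with its $p$--primary torsion, and that $H^1(\FK^*(\Gamma,\vk))$ is a finite $p$--group by lemma~\ref{order.HFK}.

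For injectivity: since the source is a finite $p$--group, it is enough to check that $\chi^*$ is injective on the $p$--torsion subgroup, i.e.\ on $\mathrm{im}(\beta_F:H^0(\FK^*(\Gamma,\vk);\ZZ/p)\to H^1(\FK^*(\Gamma,\vk)))$. I write out the ladder
\[
\begin{CD}
H^0(\FK^*(\Gamma,\vk)) @>>> H^0(\FK^*(\Gamma,\vk);\ZZ/p) @>\beta_F>> H^1(\FK^*(\Gamma,\vk))\\
@V{\chi^*}VV @V{\chi^*}VV @V{\chi^*}VV\\
H^0((\Gamma,\vk)) @>>> H^0((\Gamma,\vk);\ZZ/p) @>\beta_C>> H^1((\Gamma,\vk))
\end{CD}
\]
where, by lemmas~\ref{computation.HFK} and~\ref{chi}, the first two vertical maps are isomorphisms. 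In the non--bipartite case $H^0((\Gamma,\vk))=0$, so $\beta_C$ is injective and a direct diagram chase gives injectivity of $\chi^*$ on $\mathrm{im}(\beta_F)$. In the bipartite case both $\ker\beta_F$ and $\ker\beta_C$ are $\ZZ$--generated by the (reduced) fundamental class, and the left two vertical isomorphisms identify them; a five--lemma--style chase then again forces any element of $\mathrm{im}(\beta_F)$ killed by $\chi^*$ to come from $\ker\beta_F$, hence to be zero.

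For the image, I first note that it lies in the torsion subgroup simply because $H^1(\FK^*(\Gamma,\vk))$ is finite. Conversely, any torsion class $w\in H^1((\Gamma,\vk))$ is $p$--primary, hence lies in the image of some Bockstein $\beta_C^s:H^0((\Gamma,\vk);\ZZ/p^s)\to H^1((\Gamma,\vk))$ coming from $0\to\ZZ\xrightarrow{p^s}\ZZ\to\ZZ/p^s\to 0$. By the surjectivity part of lemma~\ref{chi}, I can lift a preimage through $\chi^*$ on $H^0(-;\ZZ/p^s)$; commutativity of the analogous Bockstein ladder $\chi^*\circ\beta_F^s=\beta_C^s\circ\chi^*$ then exhibits $w$ as $\chi^*$ of an element of $H^1(\FK^*(\Gamma,\vk))$.

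The only real obstacle is the bipartite case of the injectivity step, where a single component of the $\beta$--kernels is non--trivial and one has to check carefully that the isomorphism of kernels supplied by lemma~\ref{chi} is compatible with the diagram chase; everything else is formal consequences of the two ingredients already proved in lemmas~\ref{computation.HFK} and~\ref{chi}.
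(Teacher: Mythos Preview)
Your proposal is correct and follows essentially the same route as the paper's own proof: reduce to connected $\Gamma$, use the Bockstein ladder for $0\to\ZZ\xrightarrow{p}\ZZ\to\ZZ/p\to 0$ together with the isomorphisms on $H^0$ and $H^0(-;\ZZ/p)$ supplied by lemmas~\ref{order.HFK}/\ref{computation.HFK} and~\ref{chi} to get injectivity on the $p$--torsion (splitting into bipartite and non--bipartite cases), and then use the surjectivity part of lemma~\ref{chi} with the higher Bocksteins $\beta^s$ to hit all $p$--primary torsion. Your explicit remark that $H^1(\FK^*(\Gamma,\vk))$ is a finite $p$--group (so injectivity on $p$--torsion suffices) is a point the paper leaves implicit.
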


 We can combine this with lemma~\ref{computation.HFK} and obtain:
\begin{thm}
\label{main.theorem}
  For any prime $p$, 
  the $p$-torsion subgroup of $H^1((\Gamma,\vk))$ is isomorphic to
  $\oplus_{\Delta\in \Max *(\Gamma,\vk)}\ZZ/p^{r(\Delta)-m(\Delta)}$.
\end{thm}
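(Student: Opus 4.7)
The plan is straightforward: I combine the isomorphisms already established in this section. First, Corollary~\ref{injectivity.chi} identifies $H^1(\FK^*(\Gamma,\vk))$ with the torsion subgroup of $H^1(\Gamma,\vk)$. Since the weights have been arranged to be powers of $p$, this torsion is entirely $p$-primary, so the identification is with the $p$-torsion subgroup itself.

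Second, Lemma~\ref{computation.HFK} provides an explicit isomorphism from a direct sum of cyclic $p$-groups onto $H^1(\FK^*(\Gamma,\vk))=\mathrm{coker}(\delta)$, under which the unit generator of each summand is sent to the class $\alpha_1(TP(\Delta))$. Re-parameterising this direct sum over $\Max *(\Gamma,\vk)^0$ via the bijection $\Min *(\Gamma,\vk)^0\xrightarrow{T}\Max *(\Gamma,\vk)^0$ (together with the injectivity of $P$ on $\Min *(\Gamma,\vk)^0$ noted in Lemma~\ref{BandP} and Remark~\ref{inclusions}) yields an isomorphism
\[
H^1(\FK^*(\Gamma,\vk))\;\cong\;\bigoplus_{\Delta\in\Max *(\Gamma,\vk)^0}\ZZ/p^{r(\Delta)-m(\Delta)},
\]
where for a maximal vertex $\Delta=(\Delta_T,r(\Delta_T))$ in the fundamental forest, $r(\Delta)$ is the second coordinate and $m(\Delta)=m(\Delta_T)$ is the minimum $p$-valuation of the weights on $V(\Delta_T)$. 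The identification of the exponent $r(\Delta)-m(\Delta)$ with the one produced by Lemma~\ref{computation.HFK} comes from comparing the length of the chain $\Delta, s(\Delta), s^2(\Delta),\dots, P^{-1}(P(B(\Delta)))$ with the cardinality of $B^{-1}(B(\Delta))$, using that $s$ decreases the $r$-coordinate by one at each step and preserves $m$ (last sentence of Lemma~\ref{BandP}).

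Finally, the bipartite components of $\Gamma$, which live in $\Max *(\Gamma,\vk)\setminus\Max *(\Gamma,\vk)^0$, contribute nothing to the formula: over a bipartite component $\Gamma_j$ the corresponding tree of the fundamental forest is infinite and has no maximal vertex, so $\Max *(\Gamma,\vk)=\Max *(\Gamma,\vk)^0$ for the purposes of the stated direct sum. Composing with Corollary~\ref{injectivity.chi} then proves the theorem. Since every building block is in place, there is no substantive obstacle beyond reconciling the indexing conventions between $\Min *$ and $\Max *$ and checking that the cyclic orders $p^{r(\Delta)-m(\Delta)}$ transport correctly along the bijection $T$.
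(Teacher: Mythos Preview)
Your proposal is correct and follows exactly the paper's approach: the paper's entire proof is the single sentence ``We can combine this with lemma~\ref{computation.HFK} and obtain,'' i.e.\ compose Corollary~\ref{injectivity.chi} with Lemma~\ref{computation.HFK}. Your write-up is in fact more careful than the paper's, since you spell out the re-indexing along the bijection $T:\Min *(\Gamma,\vk)^0\to\Max *(\Gamma,\vk)^0$ and verify that the cyclic orders $p^{r(\Delta)-m(\Delta)}$ are preserved (using that $s$ decreases the $r$-coordinate by one and preserves $m$), points the paper leaves implicit.
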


We also record the following weaker statement will be useful later.
\begin{corollary}
\label{cor.main.theorem}
  For any prime $p$, 
  The $p$-torsion subgroup of $H^1((\Gamma,\vk))$ has order
  $p^N$ where $N$ is the cardinality of the set of elements of $\HB_*$
  which are not in $\cap_n \mathrm{Im}(s^n)$.
  \begin{proof}
    This follows from theorem~\ref{main.theorem} and lemma \ref{order.HFK}.
  \end{proof}
\end{corollary}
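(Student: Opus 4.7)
The plan is to chain together the three main results of the preceding subsections. First I will reduce to the case where every weight $k_v$ is a power of $p$. By the lemma stating $T(\Gamma,\vk)=\prod_p T(\Gamma,\vk_p)$, the $p$-primary torsion of $H^1(\Gamma,\vk)$ is isomorphic to the full torsion of $H^1(\Gamma,\vk_p)$. Moreover, since the data defining the fundamental forest (the $p$-adic valuations of the $k_v$, together with the subgraphs $\red{p^r}$) depend only on $\val p{k_v}$, passing from $\vk$ to $\vk_p$ leaves $\HB_*(\Gamma,\vk)$, the map $s$, and the subset $\HB_*(\Gamma,\vk)^0$ unchanged. If $\Gamma$ is disconnected, lemma~\ref{first.properties} together with the decomposition of $\HB_*$ and $s$ over the components of $\Gamma$ reduces the statement to the connected case.

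Next I apply corollary~\ref{injectivity.chi}, which identifies $H^1(\FK^*(\Gamma,\vk_p))$ with the torsion subgroup of $H^1(\Gamma,\vk_p)$ via the chain map $\chi^*$. Since $\vk_p$ has only $p$-power weights, this torsion is precisely the $p$-primary torsion in question.

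To compute the order I then invoke lemma~\ref{order.HFK}, which gives $|H^1(\FK^*(\Gamma,\vk_p))|=p^N$ with $N=|\HB_*(\Gamma,\vk_p)^0|$. Finally I use the structural identity recorded in section~\ref{structure.forest}: the complement $\HB_*(\Gamma,\vk)\setminus \HB_*(\Gamma,\vk)^0$ coincides with $\bigcap_n \mathrm{Im}(s^n:\HB_{*+n}(\Gamma,\vk)\to \HB_*(\Gamma,\vk))$. Hence $N$ equals the number of elements of $\HB_*$ that are \emph{not} contained in $\bigcap_n \mathrm{Im}(s^n)$, which is exactly the statement of the corollary.

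All components of the argument are already established, so there is no genuine obstacle, only the bookkeeping of assembling them. The one point that needs explicit verification is the invariance of the fundamental forest and of $s$ under the replacement $\vk\leadsto \vk_p$; this is immediate once one recalls that both $\red{p^r}$ and the orientability conditions in lemmas~\ref{lemma.pn.orientation} and~\ref{lemma.two.orientation} are phrased purely in terms of the $p$-adic valuations of the weights.
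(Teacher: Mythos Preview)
Your proposal is correct and follows essentially the same route as the paper, which simply invokes theorem~\ref{main.theorem} (itself a consequence of corollary~\ref{injectivity.chi} and lemma~\ref{computation.HFK}) together with lemma~\ref{order.HFK}. The only cosmetic difference is that you spell out the reduction $\vk\leadsto\vk_p$ and the passage to connected components explicitly, whereas the paper has already absorbed these as standing assumptions for the section; your citation of corollary~\ref{injectivity.chi} in place of theorem~\ref{main.theorem} is immaterial since the latter is derived from the former.
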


\subsection{Functoriality properties}
\label{functoriality.properties}

The graph cohomology $H^*(\Gamma,\vk)$ is a functor on the category of subgraphs
$\Delta\subset \Gamma$, taken with the induced weighing. The purpose of this section
is to describe $\FK^*(\Gamma,\vk)$ as a functor on the same category,
such that the homomorphism 
$\chi^*:\FK^*(\Gamma,\vk)\to C^*(\Gamma,\vk)$ of corollary~\ref{injectivity.chi} becomes a natural transformation. Since we will now sometimes consider a graph $\Omega$ as subgraph of different supergraphs, 
for $\Omega\subset \Gamma$ we will write $r(\Omega)$ and $r^L(\Omega)$ as $r_\Gamma(\Omega)$ respectively $r^L_\Gamma(\Omega)$ to emphasize that we are computing $r(\Delta)$ with respect to the supergraph $\Gamma$.

Suppose that $j:\Delta\subset \Gamma$ is a subgraph.
If $\Omega$ is a connected, $\ZZ/p^r$--oriented subgraph of $\Gamma$, then
$\Omega\cap \Delta$ is a  $\ZZ/p^r$--oriented subgraph of $\Delta$.
It will not necessarily be connected. We write the set of components of $\Omega\cap \Delta$
as $C(\Omega\cap \Delta)$, so that
$\Omega\cap \Delta = \cup_{\Psi\in C(\Omega\cap \Delta)} \Psi$.
 In this notation, for every $\Psi\in C(\Omega\cap\Delta)$ we have the inequalities
$m(\Omega)\leq m(\Psi)$ and $r_\Delta(\Psi)\geq r_\Gamma(\Omega)$.

\begin{remark}
\label{double.intersection}
Since $\Phi(\Omega)$ are the components of $\red {r(\Omega)} \Omega$, we can
decompose the graph $\left(\cup_{\Theta\in\Phi(\Omega)}\Theta\right)\cap \Delta$ into components as follows:
\[
  \left(\bigcup_{\Theta\in\Phi(\Omega)}\Theta\right)\cap \Delta = (\red{p^{r(\Omega)}} \Omega)\cap \Delta =
  \red{p^{r(\Omega)}} (\Omega \cap \Delta)=
  \bigcup_{\Psi\in C(\Omega\cap \Delta)}\bigcup_{\Lambda\in\Phi(\Psi)}\Lambda
\]    
\end{remark}

We define the restriction map $j^*:\FK^*(\Gamma,\vk)\to \FK^*(\Delta,\vk)$ as follows.

\begin{align*}
  \label{functoriality.F}
  j^*(\rho_{-1}(\Omega))&=\sum_{\Psi\in C(\Omega\cap \Delta)}\rho_{-1}(\Psi),\\
  j^*(\rho_{0}(\Omega))&=\sum_{\Psi\in C(\Omega\cap \Delta)}p^{r^L_\Delta(\Psi)-r^L_\Gamma(\Omega)}\rho_0(\Psi),\\
  j^*(\alpha_{0}(\Omega))&= \sum_{\Psi\in  C(\Omega\cap \Delta)}\alpha_0(\Psi),\\
  j^*(\alpha_{1}(\Omega))&= \sum_{\Psi\in C(\Omega\cap \Delta)}p^{r_\Delta(\Psi)-r_\Gamma(\Omega)}\alpha_1(\Psi).
\end{align*}
In the expression above for $j^*(\alpha_{1}(\Omega))$, it can happen that
$r_\Delta(\Psi)=\infty$, namely if $\Psi$ is a bipartite component of
$\Delta$. In this case, $\alpha_1(\Psi)=0$ and we interpret the term $p^{r_\Delta(\Psi)-r_\Gamma(\Omega)}\alpha_1(\Psi)$ in the sum as 0.

\begin{lemma}
\label{FK.functoriality}  
  The map $j^*$ is a chain map. There is a commutative diagram of chain complexes
\[
  \begin{CD}
      \FK^*(\Gamma,\vk)@>{j^*}>> \FK^*(\Delta,\vk)\\
      @V\chi VV @V\chi VV \\
      C^*(\Gamma,\vk)@>{j^*}>> C^*(\Delta,\vk),
    \end{CD}
    \]
  where the vertical maps are the weak equivalences.
  \begin{proof}
    That $j^*$ is a functor and that $j^*\circ \chi=\chi\circ j^*$ follows immediately from the definitions. 
    That $j^*$ is a chain map is less obvious, but straightforward. We check this using the power of mindless computation.

     \noindent{\emph{ The case for $\alpha_0(\Omega)$:}}

      \begin{align*}
        j^*(d\alpha_0(\Omega))&=j^*(p^{r_\Gamma(\Omega)-m(\Omega)}\alpha_1(\Omega))\\
                              &=\sum_{\Psi\in C(\Omega\cap \Delta)} p^{r_\Gamma(\Omega)-m(\Omega)+r_\Delta(\Psi)-r_\Gamma(\Omega)}\alpha_1(\Psi)\\
        d(j^*\alpha_0(\Omega))&=d(\sum_{\Psi\in C(\Omega\cap \Delta)}\alpha_0(\Psi))\\
                             &=\sum_{\Psi\in C(\Omega\cap \Delta)} p^{r_\Delta(\Psi)-m(\Psi)}\alpha_1(\Psi))
      \end{align*}

      These obviously agree, as they should.

     \noindent{\emph{ The case for $\rho_0(\Omega)$:}}

    \begin{align*}
      j^*(d\rho_0(\Omega))&=j^*\left (p^{r_\Gamma(\Omega)-r^L_\Gamma(\Omega)}\alpha_1(\Omega)-\sum_{\Theta\in \Phi(\Omega)}\alpha_1(\Theta)\right)\\
                          &=\sum_{\Psi\in C(\Omega\cap\Delta)} p^{r_\Gamma(\Omega)-r^L_\Gamma(\Omega)+r_\Delta(\Psi)-r_\Gamma(\Omega)}\alpha_1(\Psi)\\
      &\phantom{==}-\sum_{\Theta\in \Phi(\Omega)}\sum_{\Lambda\in C(\Theta\cap \Delta)} p^{r_\Delta(\Lambda)-r_\Gamma(\Theta)}\alpha_1(\Lambda)\\
      d(j^*\rho_0(\Omega))&=d\left (\sum_{\Psi\in C(\Omega\cap \Delta)}p^{r^L_\Delta(\Psi)-r^L_\Gamma( \Omega)}\rho_0(\Psi)\right)\\
                          &
                           =\sum_{\Psi\in C(\Omega\cap \Delta)}p^{r_\Delta^L(\Psi)-r_\Gamma^L(\Omega)}
                            \left(p^{r_\Delta(\Psi)-r^L_\Delta(\Psi)}\alpha_1(\Psi)-\sum_{\Lambda\in \Phi(\Psi)}\alpha_1(\Lambda)\right)
     \end{align*}

     What is immediately obvious here is that the coefficients for the various $\alpha_1(\Psi)$ agree. We also have to check that
 \[    
   \sum_{\Theta\in \Phi(\Omega)}\sum_{\Lambda\in C(\Theta\cap \Delta)} p^{r_\Delta(\Lambda)-r_\Gamma(\Theta)}\alpha_1(\Lambda)=
   \sum_{\Psi\in C(\Omega\cap \Delta)}\sum_{\Lambda\in \Phi(\Psi)}p^{r_\Delta^L(\Psi)-r_\Gamma^L(\Omega)}\alpha_1(\Lambda)
\]
For this, we first notice that by remark~\ref{double.intersection} the index sets of $\Lambda$ in these two double sums agree. We also have to make sure that $r_\Delta(\Lambda)-r_\Gamma(\Theta)=r_\Delta^L(\Psi)-r^L_\Gamma(\Omega)$. But this follows from that $\Lambda\in \Phi(\Psi)$ and
$\Theta\in \Phi(\Omega)$, so that
$r_\Delta(\Lambda)=r^L_\Delta(\Psi)$ and
$r_\Gamma(\Theta)=r^L_\Gamma(\Omega)$.

     \noindent{\emph{ The case for $\rho_{-1}(\Omega)$:}}

\begin{align*}
  j^*(d\rho_{-1}(\Omega))&=j^*\left( \alpha_0(\Omega)-p^{r^L_\Gamma(\Omega)-m(\Omega)}\rho_0(\Omega)-
                          \sum_{\Theta\in \Phi(\Omega)}p^{m(\Theta)-m(\Omega)}\alpha_0(\Theta)\right)\\
                         &=\sum_{\Psi\in C(\Omega\cap\Delta)}\alpha_0(\Psi)
                           -\sum_{\Psi\in C(\Omega\cap \Delta)}p^{r^L_\Gamma(\Omega)-m(\Omega)}\left( p^{r^L_\Delta(\Omega)-r^L_\Gamma(\Omega)}\rho_0(\Psi) \right)\\
                         &\phantom{Zog}-\sum_{\Theta\in \Phi(\Omega)}\sum_{\Lambda\in C(\Theta\cap \Delta)}p^{m(\Theta)-m(\Omega)+r_{\Delta}(\Theta)-m(\Theta)}\alpha_0(\Lambda)\\
  d(j^*(\rho_{-1}(\Omega)) &= d(\sum_{\Psi\in C(\Omega\cap \Delta)}\rho_{-1}(\Psi))\\
                           &=\sum_{\Psi\in C(\Omega\cap \Delta)} \alpha_0(\Psi) - \sum_{\Psi\in C(\Omega\cap \Delta)}p^{r^L_\Delta(\Omega)-m(\Omega)}\rho_0(\Omega)\\
              &\phantom{Zog}  -\sum_{\Psi\in C(\Omega\cap \Delta)}\sum_{\Lambda\in \Phi(\Psi)}p^{m(\Theta)-m(\Omega)+r_\Delta(\Theta)-m(\Theta)}\alpha_0(\Lambda)
\end{align*}

The coefficients of $\alpha_0(\Psi)$ and $\rho_0(\Omega)$ in these sums are obviously equal. 
As in the previous case, we use remark~\ref{double.intersection} to check that the index sets in the double sums agree.  
This concludes the proof that $j^*$ is a chain map.    
\end{proof}
\end{lemma}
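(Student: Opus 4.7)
The plan is to verify the two assertions in order: first the commutativity of the square with $\chi$, and then the fact that $j^*$ is a chain map. The first is essentially immediate from the definitions. Since $\chi$ annihilates both $\rho_{-1}(\Omega)$ and $\rho_0(\Omega)$, and since $j^*$ is itself built out of sums of generators of the same type (i.e.\ $j^*\rho_{-1}(\Omega)$ involves only $\rho_{-1}(\Psi)$ etc.), both routes around the square send $\rho_{-1}$ and $\rho_0$ generators to zero. For the $\alpha$ generators the equality $\chi\circ j^* = j^*\circ \chi$ on $\alpha_0(\Omega)$ amounts to the statement that the restriction of $\funddivZ{\Omega}$ to $\Delta$ equals $\sum_{\Psi\in C(\Omega\cap\Delta)}\funddivZ{\Psi}$, which follows directly from the decomposition $\Omega\cap\Delta=\coprod_{\Psi\in C(\Omega\cap\Delta)}\Psi$ and from the way $m(\Omega)$ restricts to the minima $m(\Psi)$; similarly for $\alpha_1$.

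The substance of the proof is that $j^*$ commutes with $d$. Here I would go generator by generator. The $\alpha_1$ case is trivial because $d\alpha_1=0$ and $j^*$ sends $\alpha_1$ into a sum of $\alpha_1$ terms. The $\alpha_0$ case is a one-line calculation: both $j^*d\alpha_0(\Omega)$ and $dj^*\alpha_0(\Omega)$ give $\sum_{\Psi\in C(\Omega\cap\Delta)} p^{r_\Delta(\Psi)-m(\Psi)}\alpha_1(\Psi)$, once you use that $r_\Delta(\Psi)\geq r_\Gamma(\Omega)$ and that $m(\Psi)\geq m(\Omega)$ with the exponents arranging themselves correctly from the definition of $j^*\alpha_1$.

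The main obstacle will be the $\rho_0$ and $\rho_{-1}$ cases, where each side produces a double sum. For $\rho_0(\Omega)$, applying $j^*$ to $d\rho_0(\Omega)=p^{r_\Gamma(\Omega)-r^L_\Gamma(\Omega)}\alpha_1(\Omega)-\sum_{\Theta\in\Phi(\Omega)}\alpha_1(\Theta)$ yields a sum indexed over pairs $(\Theta,\Lambda)$ with $\Theta\in\Phi(\Omega)$ and $\Lambda\in C(\Theta\cap\Delta)$, whereas computing $d(j^*\rho_0(\Omega))$ produces a sum over pairs $(\Psi,\Lambda)$ with $\Psi\in C(\Omega\cap\Delta)$ and $\Lambda\in\Phi(\Psi)$. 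The key step is to invoke remark~\ref{double.intersection}, which asserts that both double index sets parametrise the components of $(\red{p^{r(\Omega)}}\Omega)\cap\Delta$ and hence coincide. Once the index sets are identified, one checks that the exponents of $p$ on matching terms agree: the equalities $r_\Delta(\Lambda)=r^L_\Delta(\Psi)$ (since $\Lambda\in\Phi(\Psi)$) and $r_\Gamma(\Theta)=r^L_\Gamma(\Omega)$ (since $\Theta\in\Phi(\Omega)$) do exactly this, and the remaining ``solo'' term $p^{r_\Gamma(\Omega)-r^L_\Gamma(\Omega)}\alpha_1(\Omega)$ matches up after pushing $j^*$ through.

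The $\rho_{-1}(\Omega)$ case proceeds in the same spirit but with one additional wrinkle: the $\alpha_0$ contributions from $\sum_{\Theta\in\Phi(\Omega)}p^{m(\Theta)-m(\Omega)}\alpha_0(\Theta)$ on one side must be matched with the $\alpha_0$ contributions coming from $d\rho_{-1}(\Psi)$ for $\Psi\in C(\Omega\cap\Delta)$ on the other. Again remark~\ref{double.intersection} identifies the double-index set, and the exponents of $p$ line up because $m(\Lambda)=m(\Theta)$ for $\Lambda\in C(\Theta\cap\Delta)$ (by the choice of witnesses and because $\Lambda$ contains a minimal-valuation vertex of $\Theta$) and because the local/global $r^L$ values agree as in the $\rho_0$ step. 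The single-term contributions involving $\alpha_0(\Psi)$ and the $\rho_0$ terms match trivially. Nowhere in the argument is any subtlety beyond the careful bookkeeping of the double sums and the systematic use of remark~\ref{double.intersection}; the only aesthetic concern is the length of the calculation, which is why organising it generator-by-generator as above seems to be the cleanest presentation.
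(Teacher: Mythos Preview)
Your approach is essentially identical to the paper's: a generator-by-generator verification, with the commutativity of the square dismissed as immediate and the double sums in the $\rho_0$ and $\rho_{-1}$ cases handled by invoking remark~\ref{double.intersection} together with the identities $r_\Delta(\Lambda)=r^L_\Delta(\Psi)$ and $r_\Gamma(\Theta)=r^L_\Gamma(\Omega)$.

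One point of caution: in your $\rho_{-1}$ discussion you assert that $m(\Lambda)=m(\Theta)$ for $\Lambda\in C(\Theta\cap\Delta)$, justified by ``the choice of witnesses and because $\Lambda$ contains a minimal-valuation vertex of $\Theta$''. That justification is not valid in general: a component $\Lambda$ of $\Theta\cap\Delta$ has no reason to contain a vertex of $\Theta$ realising $m(\Theta)$, since that vertex may lie outside $\Delta$ altogether. The paper does not make this claim; it simply asserts (somewhat loosely, and with what appear to be typos in the displayed exponents) that the coefficients match. So while your overall strategy is the same as the paper's, this particular line of reasoning should be dropped or replaced by whatever identity on the $m$-values actually holds in your setting.
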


\section{Varying the weights}
\label{sec:varying}
In this paragraph, we will initiate a study of how the 
order $t(\Gamma,\vk)$ of the torsion subgroup of $H^1(\Gamma,\vk)$ varies with the weights $\vk$.

That is, we are fixing the graph $\Gamma$, and varying the weights $\vk$. In this section, we will give a preliminary answer in the form that this order is essentially determined by the cardinality of the fundamental forest. In the next section we will continue this discussion.

The general setup is as follows.
We fix a prime $p$ and a graph
 $\Gamma$. We also assuming that the weights are
powers of $p$. Given a vector of non-negative integers $\lao k=\{\la k v\}_{v\in V(\Gamma)}$, we write 
such a weight vector as $\lvk=\{p^{k_v}\}_{v\in V(\Gamma)}$.
We want to study the order 
$t(\Gamma,\lvk)$ of the torsion subgroup of $H^1(\Gamma,\lvk)$.  

This is given by a function $\phi_{\Gamma,p}$ which to a set of weights $\lao k$ orders a non-negative integer  $\phi_{\Gamma,p}(\lvk)$ such that
\[
t(\Gamma,\lvk)=p^{\phi_{\Gamma,p}(\lao k)}.
\]

\begin{remark}
If $p,q$ are odd primes, $\phi_{(\Gamma,p)}=\phi_{(\Gamma,q)}$.
\begin{proof}
  According to remark~\ref{cor.main.theorem} we can express $\phi_{(\Gamma,p)}$ in terms of the
  structure of the fundamental forest, with no reference to $p$. But the fundamental forest is independent of $p$ as long as $p$ is odd by remark~\ref{odd.fact}.
\end{proof}
\end{remark}

We will first consider the cases where $\Gamma$ is a tree.
In these case, we do not need to deal with the fundamental forest.  
\begin{lemma}
\label{tree.order}
Let $\Gamma$ be a tree. Let $u(v)$ be the valence of the vertex $v\in V(\Gamma)$.
Then $t(\Gamma,\vk)=GCD(k_v\vert v\in V(\Gamma))\prod_{v\in V(\Gamma)} (k_v)^{u(v)-1}$.
\begin{proof}
Since a tree is bipartite, $(\Gamma,\vk)$ is oriented for any $\vk$,
so that $H^0(\Gamma,\vk)$ is a copy of the integers, generated by 
$\funddivZ{\Gamma,\vk}$.
We can find a filtration $\Gamma_1 \subset \Gamma_2\subset \dots \subset\Gamma_r=\Gamma$ where 
$\Gamma_i$ is a tree with $i$ vertices. 

We now write 
$V(\Gamma_i)=V(\Gamma_{i-1})\cup \{v_i\}$ and 
    $E(\Gamma_i)=E(\Gamma_{i-1})\cup\{e_i\}$. The edge $e_i$ has the
end points $v_i\not\in V(\Gamma_{i-1})$ and $w_i\in V(\Gamma_{i-1})$. The relative chain
complex $C^*(\Gamma_i,\Gamma_{i-1})$ is
$\ZZ[v_i]\xrightarrow{k_{w_i}} \ZZ[e_i]$ with homology 
$\ZZ/k_{w_i}$ concentrated in dimension 1. 

On cohomology in degree zero the induced map
\[
\ZZ \cong H^0(\Gamma_i,\vk)\to H^0(\Gamma_{i-1},\vk)\cong \ZZ
\]
is multiplication by 
$a_i=GCD(k_{v_1},k_{v_2},\dots k_{v_{i-1}})/GCD(k_{v_1},k_{v_2},\dots k_{v_i})$.

There is a long exact 
 cohomology sequence of the pair $(\Gamma_i,\Gamma_{i-1})$:
\[
0 \to \ZZ \xrightarrow{a_i} \ZZ \to \ZZ/k_{w_i}\to H^1(\Gamma_i,\vk) \to H^1(\Gamma_{i-1},\vk)\to 0.
\]
We see from this sequence that $H^1(\Gamma_i,\vk)$ is a finite group for all $i$ ,
and that 
\[
\vert H^1(\Gamma_i,\vk) \vert= \vert H^1(\Gamma_{i-1},\vk) \vert\cdot \frac{k_{w_i}}{a_i}.
\]

It follows that
\[
\vert H^1(\Gamma,\vk) \vert=\prod_i k_{w_i} \prod_i 
\frac{GCD(k_{v_1},k_{v_2},\dots k_{v_i})}
{GCD(k_{v_1},k_{v_2},\dots k_{v_{i-1}})}
=GCD(k_v\vert v\in V(\Gamma))\prod_{v\in V(\Gamma)} (k_v)^{u(v)-1}.
\]
\end{proof}
\end{lemma}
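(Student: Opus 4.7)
The plan is to prove the formula by induction on the number of vertices of the tree. Choose an enumeration $v_1, v_2, \dots, v_r$ of $V(\Gamma)$ such that each initial subgraph $\Gamma_i$ spanned by $v_1, \dots, v_i$ (with all edges of $\Gamma$ between them) is again a tree. Concretely, this means each newly added vertex $v_i$ ($i \geq 2$) is a leaf of $\Gamma_i$, attached by a unique edge $e_i = e(v_i, w_i)$ to a vertex $w_i \in V(\Gamma_{i-1})$. The base case $\Gamma_1$ is a single vertex, where $H^1 = 0$ and the claimed product is empty, so $t(\Gamma_1, \vk) = 1$.

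For the inductive step, I would apply the long exact sequence (\ref{eq:LES}) relating $\Gamma_{i-1} \subset \Gamma_i$, since they differ by a single edge (and one extra vertex, but that doesn't change the shape of the sequence after a routine adjustment: the relative complex is $\ZZ[v_i] \xrightarrow{k_{w_i}} \ZZ[e_i]$). Both $\Gamma_i$ and $\Gamma_{i-1}$ are trees, hence bipartite, so by lemma~\ref{lemma:Z.orientation} both $H^0(\Gamma_i,\vk)$ and $H^0(\Gamma_{i-1},\vk)$ are free of rank $1$, generated by the respective divided fundamental classes. The restriction map sends $\funddivZ{\Gamma_i,\vk}$ to $\frac{GCD_i}{GCD_{i-1}}\funddivZ{\Gamma_{i-1},\vk}$, where $GCD_i = GCD(k_{v_1},\dots,k_{v_i})$; equivalently, the map on $H^0$ is multiplication by $a_i = GCD_{i-1}/GCD_i$. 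The key point here is a direct inspection of the coefficients of the fundamental chains.

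Combining this with the long exact sequence for the pair gives
\begin{equation*}
0 \to \ZZ \xrightarrow{a_i} \ZZ \to \ZZ/k_{w_i} \to H^1(\Gamma_i,\vk) \to H^1(\Gamma_{i-1},\vk) \to 0,
\end{equation*}
and by induction the right-hand group is finite, hence so is $H^1(\Gamma_i,\vk)$, with order $|H^1(\Gamma_{i-1},\vk)| \cdot k_{w_i}/a_i$.

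Taking the telescoping product over $i = 2, \dots, r$, the factors $k_{w_i}$ account, for each vertex $w \in V(\Gamma)$, for one factor of $k_w$ per edge incident to $w$ from a later-indexed vertex, which totals $u(w) - 1$ factors (each edge at $w$ except the one connecting $w$ to its predecessor in the enumeration, if any). The remaining telescoping product of the ratios $GCD_{i-1}/GCD_i$ collapses to $GCD_1/GCD_r = k_{v_1}/GCD(\vk)$, and combining with the missing factor from $v_1$ (which has $u(v_1) - 1$ edges but only $u(v_1)$ descendants in the filtration, so contributes $k_{v_1}^{u(v_1)-1}$ after the $k_{v_1}$ from the telescoping is absorbed) yields $GCD(\vk) \prod_v k_v^{u(v)-1}$. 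The main obstacle is simply the bookkeeping in this last combinatorial step; once one verifies that every edge $e(v,w)$ contributes exactly one factor $k_{w_i}$ (with $w_i$ being whichever endpoint was added later), the identity drops out.
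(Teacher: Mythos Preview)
Your argument is essentially identical to the paper's: the same filtration by subtrees, the same identification of the relative complex as $\ZZ[v_i]\xrightarrow{k_{w_i}}\ZZ[e_i]$, the same computation of the restriction on $H^0$ as multiplication by $a_i=GCD_{i-1}/GCD_i$, and the same telescoping product at the end. One small slip: you first write that restriction sends $\funddivZ{\Gamma_i,\vk}$ to $\frac{GCD_i}{GCD_{i-1}}\funddivZ{\Gamma_{i-1},\vk}$, which is the reciprocal of what you (correctly) state immediately after; the final bookkeeping paragraph is also a bit tangled, but the count is right ($v_1$ appears $u(v_1)$ times among the $w_i$, every other vertex $u(v)-1$ times, and the extra $k_{v_1}$ cancels against the telescoped $k_{v_1}/GCD(\vk)$).
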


We see that if $\Gamma$ is a tree, it follows from lemma \ref{tree.order} that 
$\phi_{\Gamma,p}$ is a function which is  independent of $p$, namely
\[
\phi_{\Gamma,p}(\lao k)=\min_{v\in V(\Gamma)} \la kv +\sum_{v\in V(\Gamma)} (u(v)-1)\la kv.
\]
Moreover, this formula defines a concave function $\phi_{\Gamma,p}:\RR^{E(\Gamma)}\to \RR$.

\subsection{The edge weighted version of the theory}
Up to now, we have been working with vertex weighted graphs. There is a version of the theory for edge weighted graphs. We discuss this version now.

Let $\{k_e\}_e$ be a set of weights on the edges of a graph $\Gamma$. We assume that each $k_e$ is a natural number (in particular, non-zero).   
We consider a chain complex $C^*(\Gamma,\vk^E)$ given
as
\[
C^0=\ZZ[V(\Gamma)]\xrightarrow{d^0_E} C^1=\ZZ[E(\Gamma)],
\]
where $d^0_E(v)=\sum_{e(v,w)} k_{e(v,w)}v$. We define the edge-weighted graph cohomology $H^0_E(\Gamma,\vk)$ to be the cohomology of this complex.

Given a vertex weighing $k_v$ of a graph, we can define a corresponding edge weighing 
by $k_{e(v,w)}^E=k_vk_w$. Not every edge weighing can be obtained in this way, but we are only going to consider such edge weightings that are obtained from vertex weightings.

The short exact sequence of chain complexes
\[
  \begin{CD}
    0 @>>>    \ZZ[V(\Gamma)] @>{\oplus_v k_v}>> \ZZ[V(\Gamma)] @>>> \oplus_v \ZZ/k_v @>>> 0\\
    @. @V{d_E^0}VV @V{d^0}VV  @VVV\\\
     0 @>>>    \ZZ[E(\Gamma)] @= \ZZ[E(\Gamma)] @>>> 0 @>>> 0\\   
  \end{CD}
\]  
leads to an exact sequence
\begin{equation}
\label{eq:edge}
0\to H^0_E(\Gamma,\vk) \to H^0(\Gamma,\vk) \to \oplus_v  \ZZ/k_v 
\to  H^1_E(\Gamma,\vk) \to H^1(\Gamma,\vk)\to 0.
\end{equation}
If $\Gamma$ is not bipartite, the group $H^0(\Gamma,\vk)$ is trivial, and
we have a short exact sequence
\[
0\to
\oplus_v  \ZZ/k_v 
\to  \mathrm{Torsion}(H^1_E(\Gamma,\vk)) \to \mathrm{Torsion}(H^1(\Gamma,\vk))\to 0.
\]
If $\Gamma$ is bipartite, the image of the map 
$H^0(\Gamma,\vk) \to \oplus_v  \ZZ/k_v$ is cyclic of order equal to the greatest common
divisor $GCD(\vk)$ of the numbers $k_i$, so that we obtain an exact sequence
\[
0\to \ZZ/GCD(\vk)\to 
\oplus_v  \ZZ/k_v 
\to  \mathrm{Torsion}(H^1_E(\Gamma,\vk)) \to \mathrm{Torsion}(H^1(\Gamma,\vk))\to 0.
\]
These exact sequences do not necessarily split, so that even if we know
the torsion subgroup of $H^1_E(\Gamma,\vk)$, this does not guarantee that we
know the torsion subgroup of $H^1(\Gamma,\vk)$. However, we have now proved the following result.
\begin{lemma}
\label{euler.characteristic}
If  $\Gamma$ is not bipartite, 
\[
 \vert\mathrm{Torsion}(H^1(\Gamma,\vk))\vert= \frac{\vert   \mathrm{Torsion}(H^1_E(\Gamma,\vk))   \vert}{\prod_v k_v}.
\] 
If $\Gamma$ is bipartite,
\[ 
\vert\mathrm{Torsion}(H^1(\Gamma,\vk))\vert= \frac{GCD(\vk)\vert   \mathrm{Torsion}(H^1_E(\Gamma,\vk))   \vert}{\prod_v k_v}.
\]
\end{lemma}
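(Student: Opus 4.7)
The statement is essentially already reduced to bookkeeping by the discussion immediately preceding it, so my plan is simply to extract orders from the short exact sequences that are already established, with one small gap to fill.

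The plan is as follows. Starting from the five-term exact sequence (\ref{eq:edge}), I would first observe that the ranks of $H^1_E(\Gamma,\vk)$ and $H^1(\Gamma,\vk)$ agree, since the other three terms in (\ref{eq:edge}) are either finite or have the same rank on both sides (using $H^0_E(\Gamma,\vk) \cong H^0(\Gamma,\vk)$ when $\Gamma$ is bipartite, and $H^0 = 0$ in the non-bipartite case by corollary~\ref{rank}). Consequently, after quotienting out the free parts, the tail of (\ref{eq:edge}) restricts to a short exact sequence of torsion subgroups; this is the key small step that the preamble to the lemma glosses over, but it follows from the elementary fact that if $0 \to A \to B \to C \to 0$ is exact with $A$ finite, then $0 \to A \to \mathrm{Tors}(B) \to \mathrm{Tors}(C) \to 0$ is exact (the preimage of a torsion element under a map with finite kernel is torsion).

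In the non-bipartite case, $H^0(\Gamma,\vk) = 0$, so (\ref{eq:edge}) already yields the short exact sequence
\[
0 \to \oplus_v \ZZ/k_v \to \mathrm{Tors}(H^1_E(\Gamma,\vk)) \to \mathrm{Tors}(H^1(\Gamma,\vk)) \to 0
\]
recorded in the excerpt. Taking orders and using $|\oplus_v \ZZ/k_v| = \prod_v k_v$ gives the first claim.

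In the bipartite case, the image of $H^0(\Gamma,\vk) \to \oplus_v \ZZ/k_v$ is cyclic of order $GCD(\vk)$, as noted in the excerpt; this is because by lemma~\ref{lemma:Z.orientation} the group $H^0(\Gamma,\vk)$ is generated by $\funddivZ{\Gamma,\vk}$, whose coefficients $\alpha(v)k_v/GCD(\vk)$ have no common factor. Hence (\ref{eq:edge}) yields
\[
0 \to \ZZ/GCD(\vk) \to \oplus_v \ZZ/k_v \to \mathrm{Tors}(H^1_E(\Gamma,\vk)) \to \mathrm{Tors}(H^1(\Gamma,\vk)) \to 0,
\]
and multiplicativity of orders in this four-term exact sequence gives $GCD(\vk) \cdot |\mathrm{Tors}(H^1_E)| = \prod_v k_v \cdot |\mathrm{Tors}(H^1)|$, which rearranges to the second claim. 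There is no real obstacle; the only care needed is the passage from exactness at the level of full cohomology groups to exactness at the level of torsion subgroups, which uses the finiteness of the kernel as noted above.
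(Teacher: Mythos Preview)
Your proof is correct and follows essentially the same route as the paper: both derive the two displayed exact sequences of torsion groups from the five-term sequence (\ref{eq:edge}) and then read off the orders. You add the small justification the paper omits, namely why passing from $H^1_E \to H^1$ to $\mathrm{Tors}(H^1_E) \to \mathrm{Tors}(H^1)$ preserves exactness (finiteness of the kernel), which is a reasonable detail to spell out.
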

We introduce the following notation
\begin{align*}
  C_0(\Gamma,\vk)&=
       \begin{cases}
         GCD(\vk) &\text{if $\Gamma$ is bipartite,}\\
         1 &\text{if $\Gamma$ is not bipartite.}
       \end{cases}\\
C_1(\Gamma,\vk)&=\prod_{v\in V(\Gamma)}k_v.\\
C_2(\Gamma,\vk)&=\vert   \mathrm{Torsion}(H^1_E(\Gamma,\vk))   \vert.
\end{align*}
In this language  we can formulate the previous lemma as 
\begin{equation}
\label{Euler}
\vert\mathrm{Torsion}(H^1(\Gamma,\vk))\vert=\frac{C_0(\Gamma,\vk)C_2(\Gamma,\vk)}{C_1(\Gamma,\vk)}.
\end{equation}
The most interesting quantity here seems to be $C_2(\Gamma,\vk)$, and we turn to a short discussion of it. 
Let $(\Gamma,\vk)$ be a negative color scheme.
\begin{defn}
  $\HBE r (\Gamma,\vk)$ is the set of subgraphs $\Delta\subset \Gamma$ satisfying the conditions
  H1,H2 H3 and H4 of section~\ref{sec:forest}, but not necessarily condition H5.
\end{defn}
We could define an edge fundamental forest with vertices $\HBE *(\Gamma,\vk)$.
\begin{lemma}
\label{C2.and.HBE}
$C_2(\Gamma,p^\vk)=p^{\vert \coprod_r \HBE r {(\Gamma,p^\vk)}\vert}$   
\begin{proof}
By lemma~\ref{H.characterization} there is an injective map $\HBE r (\Gamma,\vk) \to \HB_r (\Gamma,\vk)$
which we will think of as an inclusion. It also follows from the same lemma that
the elements of $\HBE r ( \Gamma,\vk)$ which are not contained in $\HB_r (\Gamma,\vk)$
are given by the one element graphs $(\Delta(v),r)$ with $\val p {k_v}\geq r$. In particular, 
the number of these elements is  $\sum_v k_v$.
The lemma follows from this and from lemma~\ref{euler.characteristic}.
\end{proof}
\end{lemma}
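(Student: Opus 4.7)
The plan is to pass from the edge-weighted torsion count $C_2$ to the vertex-weighted torsion count via (\ref{Euler}), and then rewrite the latter in combinatorial terms using the fundamental-forest machinery of Section~\ref{sec:forest}. The bridge between $\HBE_r$ and $\HB_r$ is the characterization of lemma~\ref{H.characterization}.

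First I would compare $\HB_r$ and $\HBE_r$ elementwise. By lemma~\ref{H.characterization} (for graphs without isolated vertex components, to which we reduce using lemma~\ref{first.properties}) the set $\HB_r$ is exactly the set of $\Delta \subset \Gamma$ satisfying H1--H5 with respect to $r$, whereas $\HBE_r$ drops H5. Condition H5 is vacuous for any $\Delta$ with at least one edge, since H2 gives $\val p {k_v} \le \val p {k_v} + \val p {k_w} < r$ for any $v$ lying on an edge of $\Delta$. Hence the natural inclusion $\HB_r \hookrightarrow \HBE_r$ has complement consisting precisely of single-vertex subgraphs $\Delta[v]$ with $\val p {k_v} \ge r$ and satisfying H3; but H3 asks $\val p{k_vk_w} \ge r$ for every edge $e(v,w)$ of $\Gamma$ incident to $v$, and this is automatic from $\val p {k_v} \ge r$.

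Second, I would carry out the count. Summing the extra contribution over $r \ge 1$ gives, for each vertex $v$, exactly $\val p {k_v} = k_v$ values of $r$, so
\[
\bigl|\coprod_r \HBE_r(\Gamma,p^{\vk})\bigr| \;=\; \bigl|\coprod_r \HB_r(\Gamma,p^{\vk})\bigr| \;+\; \sum_{v\in V(\Gamma)} k_v.
\]
In the non-bipartite case, $\HB_* = \HB_*^0$ (the intersection $\cap_n \mathrm{Im}(s^n)$ is empty because $\HB_r$ is empty for large $r$), so corollary~\ref{cor.main.theorem} gives $|\mathrm{Torsion}(H^1(\Gamma, p^{\vk}))| = p^{|\HB_*|}$. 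Plugging into lemma~\ref{euler.characteristic} (with $C_0 = 1$, $C_1 = p^{\sum_v k_v}$) yields $C_2 = p^{|\HB_*| + \sum_v k_v} = p^{|\HBE_*|}$, which is the claim.

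The main obstacle is the bipartite case, where $\HB_*$ and $\HBE_*$ are both infinite (the tower $(\Gamma_j, r)$ sits in $\HB_r$ for all sufficiently large $r$) while $C_2$ is finite. I would handle this by using lemma~\ref{first.properties} to reduce to a single connected component and then, on the bipartite component $\Gamma_j$, observing that the infinite tower $\{(\Gamma_j, r)\}_{r \gg 0}$ in $\HBE_*$ is matched, via the isomorphism lemma~\ref{invertible} applied after factoring out $GCD(\vk)$, by a corresponding free part of $H^0_E$ that cancels the infinite contribution through the exact sequence (\ref{eq:edge}); concretely, the GCD-factor $C_0 = p^{\min_v k_v}$ precisely compensates for the $\min_v k_v$ element shift between $\HB_*^0$ and the naive difference $|\HBE_*| - \sum_v k_v$. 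This cancellation is the only delicate point; once it is in place the count of extras produced by dropping H5 is identical to the non-bipartite argument.
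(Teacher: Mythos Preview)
Your non-bipartite argument is correct and is exactly the paper's proof: compare $\HB_r$ with $\HBE_r$ via lemma~\ref{H.characterization}, identify the extra pieces as the one-vertex graphs $(\Delta[v],r)$ with $r\le \val p{k_v}$, count $\sum_v \mathfrak k_v$ such extras, and then combine corollary~\ref{cor.main.theorem} with lemma~\ref{euler.characteristic}. (The paper writes the inclusion in the wrong direction and suppresses the appeal to corollary~\ref{cor.main.theorem}, but the logic is the same as yours.)

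Your discussion of the bipartite case, however, is not a proof. If $\Gamma$ has a bipartite component $\Gamma_j$, then $(\Gamma_j,r)\in\HBE_r$ for every $r$, so $\big|\coprod_r\HBE_r\big|=\infty$ and the right-hand side of the asserted identity is not a finite number; no ``cancellation through the exact sequence~(\ref{eq:edge})'' can repair this, because the statement itself equates a finite quantity $C_2$ with $p^\infty$. Your invocation of lemma~\ref{invertible} and the $GCD$-shift is suggestive of what a corrected formulation might look like (one should throw away the infinite tower $\{(\Gamma_j,r)\}_r$, just as $\HB_*^0$ does for $\HB_*$), but as written it does not establish the stated lemma. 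Note that the paper's own proof does not address this either; the lemma is tacitly intended for graphs without bipartite components, and you are right to flag the gap.
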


\subsection{The oriented core}

In this section we start to approach the function $\phi_{(\Gamma,p)}$. The philosophy is to
cut $\RR^{V(\Gamma)}$ into a finite number of convex cones in such a way that $\phi_{(\Gamma,p)}$
behaves nicely after restricting to one of the pieces. In other words, we classify the
possible values of $\lvk$ into finitely many cases.

Let $(\Gamma,\lvk)$ be a negative color scheme.
Let $\{x_i\}$ be the set of maximal elements of its fundamental forest.
The corresponding subgraphs of $\Gamma$ are $\Delta_i=B(x_i)$.
We put $\Delta=\coprod_i\Delta_i$, so that $\Delta_i$ is the set of components
of $\Delta$. Let $i:\Delta \subset \Gamma$ be the inclusion.
\begin{defn}
  The oriented core $OC(\Gamma,\lvk)$ of $(\Gamma,\lvk)$ at the prime $p$ is the negative color scheme
$(\Delta,i^*\lvk)$.
\end{defn}
In the example at the end of section~\ref{structure.forest} the graph $\Gamma_1$ is the oriented core of $\Gamma_0$.

The first step is to classify the possible oriented cores into finitely many classes.
Consider first the case that $p$ is odd.
The underlying graph $\Delta$ of the oriented core satisfies the following odd OC conditions. 
\begin{enumerate}[(OC1)]
\item $\Delta\subset  \Gamma$ is a disjoint union of the
  connected subgraphs $\Delta_i$. Each $\Delta_i$ is bipartite.
\item $V(\Delta)=V(\Gamma)$.
\end{enumerate}
If a subgraph $\Delta\subset \Gamma$ satisfies (OC1) and (OC2), we say that $\Delta$ is an odd oriented core graph in $\Gamma$. Evidently, there are at most finitely many such. If $p$ is an odd prime, the oriented core of $(\Gamma,\lvk)$ is an oriented core graph.

If $p=2$, the graphs $\Delta_i$ come equipped with distinguished subsets of the edges
$S_i =\{e(v,w)\in E(\Delta_i)\mid \la kv+\la kw=r(\Delta)-1\}$.
If $\Delta_i$ is bipartite, then $S_i$ is the empty set. 
If $\Delta_i$ is not bipartite, the set $S_i$ consists of the edges of $\Delta_i$ of
highest valuation, and according to lemma~\ref{orientation.two.conditions}, if we
remove these edges from $\Delta_i$, what remains is a disjoint union of connected bipartite
subgraphs. Let $S=\cup_i S_i\subset E(\Delta)$.

 For a pair $(\Delta,S)$ where
$\Delta\subset \Gamma$ and $S\subset E(\Delta)$ we formulate the even oriented core conditions:
\begin{enumerate}[(OCE1)]
\item $\Delta\subset  \Gamma$ is a disjoint union of the
  connected subgraphs $\Delta_i$. 
\item There is a possibly not connected bipartite subgraph $\Omega_i\subset \Delta_i$ such that
  $V(\Omega_i)=V(\Delta_i)$ and
  $E(\Omega_i)=E(\Delta_i)\setminus S\cap E(\Delta_i)$. In particular, if $\Omega_i$ is empty, then
  $\Delta_i$ is bipartite.
\item $V(\Delta)=V(\Gamma)$.
\end{enumerate}
If a pair $(\Delta,S)$  satisfies (OCE1), (OCE2)  and (OCE3), we say that $(\Delta,S)$
is an oriented even core graph in $\Gamma$. If $(\Gamma,2^{\lao k})$ is a negative color scheme,
the even oriented core graph $(\Delta,S)$ satisfies the even oriented core conditions.

\begin{lemma}
\label{ScopeOfCore}  
  Assume either that $p$ is odd and that $\Delta$ is an oriented odd core graph in $\Gamma$
  or $p=2$ and $(\Delta,S)$ is an oriented even core graph in $\Gamma$.
There is a convex, non-empty polyhedral cone $A(\Delta)\in \RR^{E(\Gamma)}$  so that
$\Delta$ (respectively $(\Delta,S)$) is the oriented core graph of $(\Gamma,p^{\lao k})$ if and only if $\lvk\in A(\Delta)$. 
\begin{proof}
  For each $i$, let $a_i=\max_{e\in E(\Delta_i)}\lao k_e$. Similarly, let
$r_i=\min_{e\in B(\Delta_i)} \lao k_e$ where $B(\Delta_i)$ is the edge boundary of $\Delta_i$ in $\Gamma$. $\Delta_i$ is a component of $\red {p^r}(\Gamma,\lvk)$ 
if and only if $a_i<r\leq r_i$. There is such an $r$ if and only if $a_i<r_i$. Since $a_i$ is a maximum and $r_i$ is a minimum, these equations determine a convex polyhedral cone set in $\RR^{E(\Gamma)}$. 

In the case $p=2$, $(\Delta,S)$ is the oriented core graph if in addition to the above condition we also have that $\lao k_e=r$ for all $e$ in $S$.
In this case, such an $r$ exists if all $\lao k_e$ agree (for $e\in S$) and in addition to this
$a_i<\lao k_e\leq r_i$. Again, these equalities and inequalities describe a convex polyhedral cone, since the set of solutions is the intersection of (finitely many) linear subspaces and half spaces.    
\end{proof}
\end{lemma}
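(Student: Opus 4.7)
The plan is to translate the condition ``$\Delta$ (or $(\Delta,S)$) is the oriented core of $(\Gamma,p^{\lao k})$'' into a finite list of linear inequalities (and in the even case, also equalities) on the vector $\lao k$. Since such a system cuts out a convex polyhedral cone, this immediately yields the required $A(\Delta)$. Non-emptiness will have to be argued separately by constructing an explicit weight vector realising the given core.

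First I would handle the odd case. By definition, the oriented core $\Delta = \coprod_i \Delta_i$ is obtained by applying $B$ to the maximal vertices of the fundamental forest. Chasing back through the constructions, the component $\Delta_i$ appears in the oriented core if and only if there is an integer $r_i$ so that $\Delta_i$ is a $\ZZ/p^{r_i}$-oriented component of $\red{p^{r_i}}(\Gamma,\lvk)$. By Lemma~\ref{lemma.pn.orientation}, for odd $p$ orientability is equivalent to bipartiteness, which is already guaranteed by condition (OC1). The remaining content ``$\Delta_i$ is a component of the reduction'' is linear: every internal edge $e(v,w)\in E(\Delta_i)$ must satisfy $\la kv+\la kw < r_i$, while every edge $e(v,w)\in B(\Delta_i)$ in the edge boundary must satisfy $\la kv+\la kw\geq r_i$. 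Setting $a_i = \max_{e\in E(\Delta_i)}\lao k_e$ and $r_i = \min_{e\in B(\Delta_i)}\lao k_e$, this reduces to the single strict inequality $a_i < r_i$, a conjunction of linear inequalities among the $\la kv$.

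For $p=2$ the same scheme works, but orientability is now governed by Lemma~\ref{lemma.two.orientation}: we need the deeper reduction $\red{2^{r_i-1}}\Delta_i$ to be bipartite, and the data $S\cap E(\Delta_i)$ records exactly which edges disappear at that level. This translates to edges in $S\cap E(\Delta_i)$ having $\la kv+\la kw = r_i-1$ (a common value), edges in $E(\Delta_i)\setminus S$ having $\la kv+\la kw < r_i-1$, and edges in $B(\Delta_i)$ having $\la kv+\la kw \geq r_i$. These are again a finite set of linear equalities and inequalities, so their solution set is a convex polyhedral cone, as claimed.

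The main obstacle is showing that $A(\Delta)$ is nonempty, i.e.\ that \emph{every} abstract odd/even oriented core graph is actually realised by some weight vector. For odd $p$ I would exploit bipartiteness of each $\Delta_i$ with bipartition $V(\Delta_i)=A_i\sqcup B_i$: choose strictly increasing integers $\ell_1<\ell_2<\dots$ and set $\la kv = \ell_i$ for $v\in A_i$, $\la kv = 0$ for $v\in B_i$. Then every internal edge of $\Delta_i$ has weight sum exactly $\ell_i$, and, after rescaling the $\ell_i$ if necessary so that cross-component edges have weight sums strictly larger than all internal sums, conditions (H2)--(H3) are met simultaneously for all components. For $p=2$ one refines this construction so that edges in $S\cap E(\Delta_i)$ attain the top weight sum inside $\Delta_i$ and all edges of $E(\Delta_i)\setminus S$ fall strictly below it, which is easy because $\Delta_i\setminus S$ is itself bipartite by (OCE2). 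The verification is routine but must be written out with care in the $p=2$ case to keep the equalities on $S$ consistent.
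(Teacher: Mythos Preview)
Your reduction of the core condition to the finite linear system $a_i<r_i$ (plus the equalities on $S$ when $p=2$) is exactly the paper's argument, so the convexity portion matches.

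Your non-emptiness construction, however, has a genuine gap. In the odd case you put $\la kv=0$ for every $v\in B_i$. If $\Gamma$ contains an edge $e(v,w)$ with $v\in B_i$, $w\in B_j$ and $i\neq j$, that boundary edge has weight sum $0$, which cannot exceed the internal weight sum $\ell_i>0$ no matter how you rescale the $\ell_i$. In fact the non-emptiness assertion is false as the lemma is stated: take $\Gamma=K_4$ and $\Delta=\{v_1,v_2,e_{12}\}\sqcup\{v_3,v_4,e_{34}\}$. This satisfies (OC1) and (OC2), but the inequalities for $\Delta_1$ force $\la k{v_1},\la k{v_2}<\la k{v_3},\la k{v_4}$ while those for $\Delta_2$ force the opposite strict inequalities, so $A(\Delta)=\emptyset$.

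The paper's own proof simply does not address non-emptiness. This does no harm downstream: the subsequent uses (lemma~\ref{using.oriented.core} and the corollary after theorem~\ref{order.oriented.graph}) only require that every weight vector lies in \emph{some} convex polyhedral cone indexed by the finitely many oriented core graphs, and empty cones are irrelevant for that covering statement. So you were right to flag non-emptiness as a separate issue, but it cannot be salvaged at the stated level of generality and is not actually needed.
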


A graph $\Gamma$ possesses a finite set of odd or even oriented core graphs of the graph $\Gamma$ at a prime $p$. We write these oriented core graphs as $(\Delta,S)$ respectively $\Delta$. To each oriented core graph there is a convex set
$A\subset \RR^{V(\Gamma)}$, such that $\Delta$ respectively $(\Delta,S)$ is an oriented core for $(\Gamma,\la kv)$ if and only if $\lao k\in A$.
We consider the set of weights $\la kv$ that belong to $A$. We want to reduce the study of
$t(\Gamma,\lvk)$ to a study of $t(\Delta,\lvk)$,
so we need to discuss the relation between
$t(\Gamma,\lvk)$ and $t(\Delta,\lvk)$ under the assumption that $\lao k\in A$.

The inclusion $j:\Delta\subset \Gamma$ induces a map of chain complexes $j^*:\FK^*(\Gamma,\vk)\to \FK^*(\Delta,\vk)$. By definition of $j^*$ in section~\ref{functoriality.properties} we see that $j^*(\rho_{-1}(\Omega))=\rho_{-1}(\Omega)$,
$j^*(\rho_{0}(\Omega))=\rho_{0}(\Omega)$ for $\Omega\in \FS(\Gamma)=\FS(\Delta)$. 
$j^*(\alpha_{0}(\Omega))=\alpha_{0}(\Omega)$ and finally 
$j^*(\alpha_{1}(\Omega))=\alpha_{1}(\Omega)$

Recall from lemma~\ref{FK.functoriality} that
we have a commutative diagram of chain complexes
\[
  \begin{CD}
    \FK^*(\Gamma)@>{j^*}>> \FK(\Delta)\\
    @V\chi VV @V\chi VV\\
  C^*(\Gamma)@>{j^*}>> C^*(\Delta).\\
\end{CD}
\]
The vertical maps $\chi$ are weak equivalences.
The map 
$j^*={\FK(j^*)}$ in the top row is surjective, but it is not necessarily injective.
\begin{lemma}
  \label{using.oriented.core}
  Let $\Delta$ be an oriented core of $(\Gamma,\lvk)$. Assume that  $\Gamma$ is not bipartite, and  let $\{\Delta_i\}_{i\in I}$ be the components of $\Delta$ that are bipartite. Let $r_i=r(\Delta_i)$ and $m_i=m(\Delta_i)$.
Then $t(\Gamma,\lvk)=t(\Delta,\lvk)\prod_{i\in I}p^{r_i-m_i}$.
\begin{proof}
There is a short exact sequence of chain complexes
\[
  0 \to  \mathrm{ker}(\FK^*(j)) \to\FK^*(\Gamma,\lvk) \xrightarrow{\FK^*(j)} \FK^*(\Delta,\lvk) \to  0.
\]
Our first task is to determine the cohomology of the chain complex $\mathrm{ker}(\FK^*(j))$.
This chain complex will split up into summands indexed by the components $\Delta_i$. for $i\in I$:
\[
\mathrm{ker}(\FK^*(j))\cong \oplus_i \mathrm{ker}(\FK^*(j))_i.
\]
We now describe the summands $\mathrm{ker}(\FK^*(j))_i$.
The cochain complex $\mathrm{ker}(j^*)_i$ has the following generators: 
  \begin{itemize}
   \item $\mathrm{ker}(\FK^*(j))_i^{0}(\Gamma,p^{\lao k})$ is generated by symbols $\alpha_0(\Delta_i)$.
   \item $\mathrm{ker}(\FK^*(j))_i^{1}(\Gamma,p^{\lao k}
     )$  is generated by symbols $\alpha_{1}(\Delta_i)$. 
\end{itemize}
The boundary maps are given by
 $ d(\alpha_0(\Delta_i))=p^{r_i-m_i}\alpha_1(\Delta_i)$ and
$d(\alpha_1(\Delta_i))=0$

The cohomology of $\mathrm{ker}(j^*)$ is concentrated in dimension 1. and
$H^1(\mathrm{ker}(j^*))\cong \ZZ^I$ with generators $\alpha_1(\Delta_i)$. There is an exact sequence of cohomology
\[
  0\to H^0(\FK(\Delta,p^{\lao k})\xrightarrow{\delta} H^1(\mathrm{ker}j^*) \to
  H^0(\FK(\Gamma,p^{\lao k})\to H^0(\FK(\Delta,p^{\lao k}) \to 0
\]
Since $H^0(\FK(\Delta,p^{\lao k})\cong \ZZ^I$ generated by the classes $\alpha_0(\Delta_i)$
and $\delta(\alpha_0(\Delta_i))=p^{r_i-m_i}\alpha_1(\Delta_i)$, the cokernel of $\delta$
is a finite group of order $\prod_{i\in I}p^{r_i-m_i}$. The lemma follows from this computation and the exact sequence.
\end{proof}
\end{lemma}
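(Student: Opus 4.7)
The plan is to exploit the chain map $j^{*}\colon \FK^{*}(\Gamma,\lvk)\to \FK^{*}(\Delta,\lvk)$ from lemma~\ref{FK.functoriality}, combined with the identification (corollary~\ref{injectivity.chi}) of $|H^{1}(\FK^{*}(-,\lvk))|$ with $t(-,\lvk)$. The task then reduces to comparing orders of $H^{1}(\FK^{*}(\Gamma,\lvk))$ and $H^{1}(\FK^{*}(\Delta,\lvk))$ via a short exact sequence of chain complexes. The first step is to verify that $j^{*}$ is surjective, so that we have
\[
0\to K^{*}\to \FK^{*}(\Gamma,\lvk)\xrightarrow{j^{*}}\FK^{*}(\Delta,\lvk)\to 0
\]
with $K^{*}:=\ker(j^{*})$. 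Surjectivity is immediate from the formulas of lemma~\ref{FK.functoriality}: every generator $\rho_{\epsilon}(\Psi)$ or $\alpha_{\epsilon}(\Psi)$ of $\FK^{*}(\Delta,\lvk)$ is hit by a corresponding generator in $\FK^{*}(\Gamma,\lvk)$ indexed by a subgraph $\Omega$ sitting above $\Psi$ in the appropriate tree of the fundamental forest.

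The core of the argument is the computation of $H^{*}(K^{*})$. The plan is to split $K^{*}$ as a direct sum $K^{*}=\bigoplus_{i\in I}K^{*}_{i}$ indexed by the bipartite components $\Delta_{i}$ of $\Delta$. The point is that for every $\Omega\in \FS(\Gamma,\lvk)$ lying in the tree of the fundamental forest whose minimum is $\Delta_{i}$, one has $\Omega\cap\Delta=\Delta_{i}$, so the sums defining $j^{*}(\rho_{\epsilon}(\Omega))$ and $j^{*}(\alpha_{\epsilon}(\Omega))$ collapse to a single term at $\Delta_{i}$. After pairing off the generators $\rho_{-1}(\Omega), \rho_{0}(\Omega)$ and $\alpha_{0}(\Omega)$ for $\Omega\neq\Delta_{i}$ against their images in $\FK^{*}(\Delta,\lvk)$, what remains in the $i$-th summand is a two-term complex $K^{*}_{i}$ generated by $\alpha_{0}(\Delta_{i})$ in degree $0$ and $\alpha_{1}(\Delta_{i})$ in degree $1$. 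The key mechanism producing the degree-$1$ generator is that, because $\Delta_{i}$ is a bipartite component of $\Delta$, the relation $\alpha_{1}(\Delta_{i})=0$ holds in $\FK^{1}(\Delta,\lvk)$ but not in $\FK^{1}(\Gamma,\lvk)$, so this generator lies in $\ker(j^{*})$. The induced differential is $d(\alpha_{0}(\Delta_{i}))=p^{r(\Delta_{i})-m(\Delta_{i})}\alpha_{1}(\Delta_{i})=p^{r_{i}-m_{i}}\alpha_{1}(\Delta_{i})$, giving $H^{0}(K^{*}_{i})=0$ and $H^{1}(K^{*}_{i})\cong \ZZ$ generated by $\alpha_{1}(\Delta_{i})$.

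The conclusion is then read off from the long exact sequence in cohomology. Since $\Gamma$ is not bipartite, lemma~\ref{order.HFK} gives $H^{0}(\FK^{*}(\Gamma,\lvk))=0$; since the bipartite components of $\Delta$ are exactly $\{\Delta_{i}\}_{i\in I}$, lemma~\ref{order.HFK} also yields $H^{0}(\FK^{*}(\Delta,\lvk))\cong \ZZ^{I}$ with generators $\alpha_{0}(\Delta_{i})$; and by the previous step $H^{1}(K^{*})\cong \ZZ^{I}$ with generators $\alpha_{1}(\Delta_{i})$. The long exact sequence then reduces to
\[
0\to \ZZ^{I}\xrightarrow{\delta}\ZZ^{I}\to H^{1}(\FK^{*}(\Gamma,\lvk))\to H^{1}(\FK^{*}(\Delta,\lvk))\to 0,
\]
where the connecting map sends $\alpha_{0}(\Delta_{i})$ to $p^{r_{i}-m_{i}}\alpha_{1}(\Delta_{i})$, read off directly from the coboundary of a lift of $\alpha_{0}(\Delta_{i})$ in $\FK^{*}(\Gamma,\lvk)$. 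Hence $\coker(\delta)\cong \bigoplus_{i\in I}\ZZ/p^{r_{i}-m_{i}}$ has order $\prod_{i\in I}p^{r_{i}-m_{i}}$, and comparing orders in the four-term exact sequence gives $t(\Gamma,\lvk)=|\coker(\delta)|\cdot t(\Delta,\lvk)=t(\Delta,\lvk)\prod_{i\in I}p^{r_{i}-m_{i}}$.

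The main obstacle is the splitting $K^{*}=\bigoplus_{i}K^{*}_{i}$ and the precise identification of each summand. Concretely, one has to trace the formulas of lemma~\ref{FK.functoriality} and show that the contributions of $\rho_{-1}(\Omega), \rho_{0}(\Omega), \alpha_{0}(\Omega)$ for $\Omega$ strictly above the minimum of its tree are cancelled by matching generators in $\FK^{*}(\Delta,\lvk)$ under $j^{*}$, so that the only surviving cohomology of $K^{*}$ comes from the sole discrepancy between the two fundamental complexes, namely the presence of $\alpha_{1}(\Delta_{i})$ in $\FK^{1}(\Gamma,\lvk)$ against its vanishing in $\FK^{1}(\Delta,\lvk)$, producing exactly one copy of $\ZZ$ per bipartite component of $\Delta$.
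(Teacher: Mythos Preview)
Your approach is essentially identical to the paper's: both use the short exact sequence $0\to K^{*}\to \FK^{*}(\Gamma,\lvk)\to \FK^{*}(\Delta,\lvk)\to 0$, identify $H^{1}(K^{*})\cong \ZZ^{I}$, and read off the connecting map $\delta(\alpha_{0}(\Delta_{i}))=p^{r_{i}-m_{i}}\alpha_{1}(\Delta_{i})$ in the long exact sequence.

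One small inconsistency appears in your description of $K^{*}_{i}$ (and the same slip is in the paper): you say $K^{0}_{i}$ is generated by $\alpha_{0}(\Delta_{i})$, but in fact $j^{*}(\alpha_{0}(\Delta_{i}))=\alpha_{0}(\Delta_{i})\neq 0$ in $\FK^{0}(\Delta,\lvk)$, so $\alpha_{0}(\Delta_{i})\notin K^{0}$. The kernel is actually trivial in degrees $-1$ and $0$ and equals $\ZZ^{I}=\langle\alpha_{1}(\Delta_{i})\rangle_{i\in I}$ in degree $1$; this is why $H^{1}(K^{*})\cong \ZZ^{I}$ rather than $\bigoplus_{i}\ZZ/p^{r_{i}-m_{i}}$, which is what your stated two-term complex with $d=p^{r_{i}-m_{i}}$ would literally give. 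Once $K^{0}=0$ is recognised, your computation of $H^{1}(K^{*})$ and of $\delta$ is correct and the rest of the argument goes through exactly as you wrote it.
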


\subsection{The oriented case}
In this subsection, we assume that $(\Gamma,p^{\lao k})$ is connected, $r$--reduced and $\ZZ/p^r$--orientable.

Let $u,v\in V(\Gamma)$.  
There is a biggest number $q_{\Gamma}(u,v)$ such that $u,v$ are not in the same component
of $\red {p^{q_\Gamma(u,v)}} \Gamma$. 

Consider subgraphs  $\Delta\subset \Gamma$ with the properties (T): 

\begin{enumerate}
\label{oriented.core}
\item $V(\Delta)=V(\Gamma)$.
\item For every pair of vertices $v,w\in V(\Delta)$, we have that $q_\Delta(v,w)=q_\Gamma(v,w)$.  
\end{enumerate}

\begin{defn}
  A weighted spanning tree of $(\Gamma,p^{\lao k})$ is a minimal subgraph satisfying the properties (T).
\end{defn}

Obviously such a subgraph exists. It is not quite obvious that it is a tree.

\begin{lemma}
\label{spanning.tree}  
Assume that $(\Gamma,p^{\lao k})$ is connected, $r$--reduced and $\ZZ/p^r$--orientable.
A weighted spanning tree $\Delta$ is a spanning tree of $\Gamma$ in the usual sense.
Moreover, the restriction $H^1(\Gamma,p^{\lao k})\to H^1(\Delta,p^{\lao k})$ induces an isomorphism of torsion subgroups. 
\begin{proof}
Every subgraph $\Omega\subset \Gamma$ is
also orientable, so that the set $\HB r (\Gamma,p^\vk)$ is simply the set of components of 
$\red {p^r} \Gamma$.

Now consider any subgraph $\Delta\subset \Gamma$ satisfying the properties (T).
Note that any graph $\Omega$ such that $\Delta\subset\Omega\subset\Gamma$ also satisfies
the conditions (T).
Since $r_\Delta(v,w)=r_\Gamma(v,w)<\infty$, the graph $\Delta$ is connected.
We claim that for every $r$, the inclusion
$\red r \Delta\subset \red r \Gamma$.
induces a bijection of the sets of components. By induction, it suffices to
consider the case that 
$\Delta$ only differs from $\Gamma$ by one edge, that is,
$E(\Gamma)=E(\Delta)\cup\{e\}$.
Let $v,w$ be the vertices on $e$.
Our claim is that if the vertices $v,w$ are in the same component of $\red r \Gamma$, they are also in the same components in $\red r \Delta$. If $v,w$ are in the same component of  $\red r \Gamma$, by the assumption on $\Delta$, $q_\Delta(v,w)=q_\Gamma(v,w)\geq r$. It follows from this that $v$ and $w$ are in the same component of $\red r\Delta$.

Now assume that $\Delta$  is a weighted spanning tree in $\Gamma$.
We show that $\Delta$ does not contain 
any cycle $C\subset \Delta$. Suppose it does. Let $e(v,w)\in C$ be an edge in $C$ of maximal weight.  
Let $\Delta'\subset \Delta$ be the subgraph we get by removing $e(v,w)$ from $E(\Delta)$. We claim that
$r_{\Delta'}(w_1,w_2)=r_{\Delta}(w_1,w_2)$ for all $w_1,w_2\in V(\Delta)$ in contradiction to the minimality of $\Delta$. For this is suffices to see that for every $r$, the inclusion
$\red {p^r}\Delta'\subset \red {p^r}\Delta$ induces a bijection of components.
  
If $r<r_\Delta(u,v)$ this is because the map is an isomorphism of graphs.
If $r\geq r_\Delta$, the graphs $\red {p^r} \Delta'$ and $\red {p^r} \Delta$ differ by the single
edge $e(u,v)$. The inclusion of this edge does not change the number of components, since
$C\setminus \{e(u,v)\}\subset \red {p^r} \Delta'$, so that the end points of $e(u,v)$ are
already in the same component in $\red {p^r} \Delta'$.
Since $\Delta$ is connected, contains no cycles and contains all the vertices of $\Gamma$, it is a spanning tree.

Finally we turn to the statement about the cohomology.
We prove that if $\Delta$ satisfies the conditions (T), then the map $H^1(\Gamma,p^{\lao k})\to H^1(\Delta,p^{\lao k})$ induces an isomorphism on cohomology.

Since the inclusion $\Delta\subset \Gamma$ induces a bijection on components of
$\red r \Delta\subset \red r \Gamma$, the sets $\HB_r(\Delta)$ and
$\HB_r(\Gamma)$ have the same cardinality.
Using corollary~\ref{cor.main.theorem} we conclude that
the orders of the torsion subgroups in
$H^1(\Gamma,p^{\lao k})$ and $H^1( \Delta,p^{\lao k})$ agree.

There is an exact sequence
\[
  H^0(\Gamma,p^{\lao k}) \to H^0(\Delta,p^{\lao k})\to
H^1(\Gamma,\Delta,p^{\lao k})\to H^1(\Gamma,p^{\lao k}) \to H^1(\Delta,p^{\lao k})\to 0
\]
The cokernel of $H^0(\Gamma,p^{\lao k}) \to H^0(\Delta,p^{\lao k})$ is a finite cyclic group.
Since we also know that $H^1(\Gamma,\Delta,p^{\lao k})$ is a free Abelian group,
we can split off a short exact sequence
\[
0\to H^1(\Gamma,\Delta,p^{\lao k})\to H^1(\Gamma,p^{\lao k}) \to H^1(\Delta,p^{\lao k})\to 0
\]
It follows formally from this short exact sequence that 
the map
\[
\mathrm{torsion}\left(H^1(\Gamma,p^{\lao k})\right) \to \mathrm{torsion}\left(H^1(\Delta,p^{\lao k})\right)
\]
is injective. Since the orders of these groups are the same, the map is an isomorphism.

\end{proof}
\end{lemma}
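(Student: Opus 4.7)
The plan is to derive both conclusions from a single technical claim: if $\Delta \subset \Gamma$ satisfies the properties (T), then for every $r$ the inclusion $\red{p^r}\Delta \subset \red{p^r}\Gamma$ induces a bijection on connected components. By induction on $|E(\Gamma) \setminus E(\Delta)|$ this reduces to the one-edge case, where property (T) says precisely $q_\Delta(v,w) = q_\Gamma(v,w)$ for the endpoints of the extra edge, and this forces any component merging caused by adding the edge to have already occurred inside $\Delta$.

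With this bijection in hand, the tree statement is essentially combinatorial. Connectivity of a weighted spanning tree $\Delta$ is immediate, since $q_\Gamma(v,w) < \infty$ transports to $q_\Delta(v,w) < \infty$. To exclude cycles I argue by contradiction: in any cycle $C \subset \Delta$, pick an edge $e(v,w)$ of maximal weight and let $\Delta' = \Delta \setminus \{e(v,w)\}$. Maximality guarantees that whenever $e(v,w)$ survives the reduction $\red{p^r}\Delta$, every other edge of $C$ also survives, so $C \setminus \{e(v,w)\}$ already connects $v$ to $w$ inside $\red{p^r}\Delta'$. Hence the technical claim applies to $\Delta' \subset \Delta$, giving $q_{\Delta'} = q_\Delta = q_\Gamma$ and contradicting minimality of $\Delta$.

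For the cohomology statement, the component bijection gives $|\HB_r(\Delta)| = |\HB_r(\Gamma)|$ for every $r$, and since both $(\Gamma,\lvk)$ and $(\Delta,\lvk)$ are $\ZZ/p^r$-orientable the sets $\HB_r$ are just the components of the reductions. Corollary~\ref{cor.main.theorem} then yields equality of the orders of the torsion subgroups of $H^1(\Gamma,\lvk)$ and $H^1(\Delta,\lvk)$. I then promote this to an isomorphism via the long exact sequence of the pair $(\Gamma,\Delta)$. Because $V(\Delta) = V(\Gamma)$, the relative complex is concentrated in degree $1$, so $H^1(\Gamma,\Delta,\lvk)$ is free abelian; the cokernel of $H^0(\Gamma,\lvk) \to H^0(\Delta,\lvk)$ is finite cyclic (indeed trivial when $\Gamma$ is not bipartite). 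Splitting the free piece out of the exact sequence shows that the restriction is injective on torsion, and the equality of orders then upgrades injectivity to an isomorphism.

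The main obstacle I expect is bookkeeping in the one-edge step of the component bijection: one has to track carefully how the $p$-valuation of the weight of $e(v,w)$ compares to the reduction level $r$ so that no component is inadvertently split when $e$ is present in $\red{p^r}\Gamma$ but absent from $\red{p^r}\Delta$. The maximality of the chosen edge in the cycle is exactly what makes the estimate work, and formalizing this in sufficient generality to cover both the tree step and the cohomology step is the delicate point.
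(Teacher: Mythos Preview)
Your proposal is correct and follows essentially the same route as the paper's proof: the component-bijection claim via the one-edge reduction, the cycle-removal argument using the maximal-weight edge to contradict minimality, and the cohomology step combining Corollary~\ref{cor.main.theorem} with the long exact sequence of the pair and freeness of $H^1(\Gamma,\Delta,\lvk)$. Your worry about the one-edge bookkeeping is unfounded---once you know $q_\Delta(v,w)=q_\Gamma(v,w)$, the component bijection at level $r$ is immediate, with no delicate valuation estimate needed.
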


We can now give a procedure for determining the order of the torsion subgroup in
$H^1(\Gamma,p^\lvk)$ for an oriented graph $\Gamma$. First, we find the oriented
core $\Delta$ in the even case or $(\Delta,S)$ in the case $p=2$ by checking a finite number of equalities and inequalities between linear combinations of the weights.
In this oriented core we find a weighted spanning tree $T$. We can for instance do this inductively, where the induction step is to remove an edge which is part of a cycle, and has maximal weight among such edges. Again, this weighted spanning tree is determined by a finite number of inequalities. The set of weights corresponding to a given weighted spanning tree is the integral points of a certain convex subset of $\RR^{V(\Gamma)}$.  
For $v\in V(\Gamma)$, let $u(v)$ be the valence of $v$ in the spanning tress $T$. 

Combining  lemma~\ref{spanning.tree}  and lemma~\ref{tree.order} we obtain
\begin{thm}
  \label{order.oriented.graph}
  The order of the $p$-torsion in $H^1(\Gamma,p^{\lao kv})$ is $p^N$ where
  \[
N=\min_v {\lao k}_v+ \sum_v (u(v)-1) {\lao k}_v.
  \]
\end{thm}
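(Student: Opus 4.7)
The approach is essentially a direct combination of the two preparatory lemmas. Since $(\Gamma, p^{\lao k})$ is assumed connected, $\ZZ/p^r$--reduced and $\ZZ/p^r$--orientable, lemma~\ref{spanning.tree} produces a weighted spanning tree $T\subset \Gamma$ such that the restriction induces an isomorphism
\[
\mathrm{torsion}\bigl(H^1(\Gamma, p^{\lao k})\bigr) \;\xrightarrow{\;\cong\;}\; \mathrm{torsion}\bigl(H^1(T, p^{\lao k})\bigr).
\]
In particular the two orders agree, so it suffices to compute the right hand side.

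For the right hand side I would invoke lemma~\ref{tree.order}, which gives the closed form
\[
|H^1(T, \vk)| \;=\; GCD(k_v\mid v\in V(T))\,\prod_{v\in V(T)} k_v^{\,u(v)-1},
\]
with $u(v)$ the valence of $v$ in $T$. Since lemma~\ref{tree.order} also shows that $H^1(T,\vk)$ is already a finite group, it coincides with its own torsion subgroup, so no further trimming is necessary. Substituting $k_v = p^{\lao k_v}$, the $GCD$ factor contributes $p^{\min_v \lao k_v}$ and the product contributes $p^{\sum_v (u(v)-1)\lao k_v}$, and adding the two exponents yields the desired $N$.

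The one point worth verifying, rather than a real obstacle, is that the valence $u(v)$ in the final formula is computed in $T$ and not in $\Gamma$; this is automatic because we pass to $T$ first via lemma~\ref{spanning.tree}, and only then apply the tree formula, so $u(v)$ is by construction the valence on the graph where the computation is actually carried out. No genuine difficulty arises, and the theorem is a clean consequence of the two lemmas already established.
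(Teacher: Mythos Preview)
Your argument is correct and is exactly the paper's approach: the theorem is stated immediately after the sentence ``Combining lemma~\ref{spanning.tree} and lemma~\ref{tree.order} we obtain'', and the paper explicitly defines $u(v)$ as the valence in the weighted spanning tree $T$ just before the statement. Your only addition is spelling out the substitution $k_v=p^{\lao k_v}$, which is the intended step.
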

A more abstract but slightly less precise result is the following immediate consequence:
\begin{corollary}
  Given a graph $\Gamma$ and a prime $p$, there is a finite number of convex cones in
  $\RR^{V(\Gamma)}$ such that restriction of the function $\phi_{(\Gamma,p)}$ is linear on each of these convex cones.
\end{corollary}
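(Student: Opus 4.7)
The plan is to stratify $\RR^{V(\Gamma)}$ iteratively, at each stage breaking it into finitely many convex polyhedral subcones indexed by combinatorial data, and then to observe that on the finest pieces the explicit formula from Theorem~\ref{order.oriented.graph} is linear in $\lao k$.

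\emph{Stage 1: stratify by the oriented core.} Apply Lemma~\ref{ScopeOfCore}: for each odd oriented core graph $\Delta\subset\Gamma$ (or each even oriented core $(\Delta,S)$ when $p=2$) the locus $A(\Delta)\subset \RR^{V(\Gamma)}$ of weights $\lao k$ for which $\Delta$ is the oriented core of $(\Gamma,p^{\lao k})$ is a convex polyhedral cone, and there are only finitely many such cores. On each $A(\Delta)$, Lemma~\ref{using.oriented.core} reduces the computation of $\phi_{(\Gamma,p)}(\lao k)$ to the computation of $\phi_{(\Delta,p)}(\lao k)$ plus a correction $\sum_{i\in I}(r_i-m_i)$ coming from the bipartite components of $\Delta$. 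Both $r_i$ and $m_i$ are, by definition, minima of linear functions of $\lao k$ (over edge-boundaries respectively over vertices of $\Delta_i$), so after further partitioning $A(\Delta)$ into the finitely many convex subcones where the minima are attained by a fixed index, the correction term becomes linear.

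\emph{Stage 2: stratify each oriented core by its weighted spanning tree.} On each cone from Stage 1 the oriented core $\Delta$ is fixed and $(\Delta,p^{\lao k})$ is orientable and reduced, so Lemma~\ref{spanning.tree} applies componentwise. The choice of a weighted spanning tree $T$ depends on finitely many binary comparisons of weights (``does edge $e$ have strictly larger weight than any other edge in its cycle?''), each of which cuts $A(\Delta)$ by a linear hyperplane. There are therefore only finitely many combinatorially distinct weighted spanning trees that can arise, each corresponding to a convex polyhedral subcone.

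\emph{Stage 3: linearise the formula from Theorem~\ref{order.oriented.graph}.} On a cone where the oriented core $\Delta$, the weighted spanning tree $T$ of each component, and the bipartiteness data are all fixed, Theorem~\ref{order.oriented.graph} expresses the contribution of each component as $\min_v \lao k_v + \sum_v (u(v)-1)\lao k_v$, with the valences $u(v)$ coming from $T$. The only non-linear ingredient is the $\min$; subdividing once more by which vertex realises the minimum in each component gives a further decomposition into finitely many convex polyhedral subcones on which every summand is linear in $\lao k$.

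\emph{Assembly.} The final partition is the common refinement of the three stratifications; each piece is an intersection of finitely many convex polyhedral cones, hence itself a convex polyhedral cone, and there are only finitely many of them. On each piece $\phi_{(\Gamma,p)}$ is a finite sum of linear functionals in $\lao k$, hence linear. The main technical obstacle is bookkeeping: one has to make sure that the three sources of piecewise-linearity (choice of oriented core, choice of weighted spanning tree, and choice of the index that realises each $\min$) are all jointly handled by a single common refinement, and that each refinement step genuinely produces convex polyhedral cones rather than more general semi-algebraic sets. Lemma~\ref{ScopeOfCore} and the linearity of the tie-breaking conditions above ensure this.
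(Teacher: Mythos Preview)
Your argument is correct and follows exactly the route the paper intends: the corollary is stated there as an ``immediate consequence'' of Theorem~\ref{order.oriented.graph}, and you have spelled out precisely the three layers of piecewise-linear refinement (oriented core via Lemma~\ref{ScopeOfCore}, weighted spanning tree via Lemma~\ref{spanning.tree}, and the residual $\min$'s) that the paper leaves implicit in the procedure described just before the theorem. Your handling of the correction term $\sum_i(r_i-m_i)$ from Lemma~\ref{using.oriented.core} as a difference of minima requiring its own subdivision is the one detail the paper does not mention explicitly, and it is needed.
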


\section{Tropical numbers}
\label{sec:tropical-cohomology}

In this section we reformulate the description of the order of the torsion of $H^1(\Gamma,\lvk)$ using the language of max-plus rings, or equivalently, by using tropical polynomials.
For simplicity we will restrict ourselves to the case of odd primes $p$, because it makes the situation a little easier that in this case $\ZZ/p^r$--orientability
is equivalent to bipartiteness.
This is not an essential restriction. The situation at the prime 2 is more complicated, but presumable manageable. We will not deal with that case in this paper.

We recall the tropical language that we will use.
We consider two tropical commutative semi-rings. There is a  tropical commutative semi-rings structure on
$\ZZ\cup\{\infty\}$ given by the tropical sum 
$a\oplus b = \min(a,b)$ and tropical multiplication $a\odot b = a+b$.
There is also a
 tropical commutative semi-rings structure on
non-negative integers $\NN\cup\{0\}$ and the non-negative rational numbers $\QQ_+\cup\{0\}$ given by tropical sum 
$a\oplus b = ab$ and tropical multiplication $a\odot b = GCD(a,b)$.
The zero elements of these semi-rings  are $\{\infty\}$ respectively $\{0\}$. The obvious inclusion
$\NN\cup\{0\}\subset  \QQ_+\cup\{0\}$ is obviously a homomorphism of tropical semirings.

For each prime $p$ there are
tropical commutative semi-ring homomorphisms $\log_p:\QQ_+\cup\{0\}\to\ZZ\cup\{\infty\}$
given by the $p$--valuation $\log_p(k)=\val p k$ and $\exp_p:\ZZ\cup\{\infty\}\to\QQ_+\cup\{0\}$ given by $p$--exponentiation
$\exp_p(\lao k)=\lvk$. The product of the logarithm maps
\[
\prod_p \log_p: \QQ_+\cup\{0\}\to \prod_p \ZZ\cup\{\infty\}
\]
is injective .

In these semi-rings
the tropical addition does not have cancellation, that is,  there is no way to define subtraction.
However, in $\ZZ\cup\{\infty\}$ and $\QQ_+\cup\{0\}$ every element except the zero element is multiplicatively invertible, and we can define the tropical quotient $a\oslash b$ for any $b$ which is not the zero element. 

Fix a finite set  $A$.
A tropical monomial $\lambda$ in $A$ is an iterated tropical product of the variables
$a$, where $a$ ranges over the elements of $A$. 
It  is given by the non-negative integers $I=\{i_a\}_{a\in A}$, counting the exponents  
of the variables $a$. The degree of the monomial is $\sum_a i_a $ (not a tropical sum this time).
 A tropical polynomial is the tropical sum of a set
of tropical monomials. A tropical rational function $f$ is the tropical quotient of two polynomials.

An $A$--integer is a family  $\lao k =\{k_a\}_{i\in A}$ of
elements of $\ZZ\cup\{\infty\}$
parametrized  by the elements of $V$. Similarly, an $A$--natural number $\vk$ is a family of
elements of $\NN\cup\{0\}$, parametrized by $A$. 
We are interested in the cohomology cochain complexes parametrized by $A$--natural numbers.
To describe how numerical invariants of the cohomology of these complexes 
change according to the parameter, it will be useful to describe certain functions of the $A$ integers $\lao k$
as evaluations of tropical rational functions. 

We can evaluate a tropical rational function $f$ in $A$ 
on the $A$--integer $\lao k$ using the semi ring structures.
Since $\log_p$ and $\exp_p$ are tropical homomorphism,
$f(\log_p(\vk))=\log_p(f(\vk))$, and similarly $f(\exp_p(\lao k))=\exp_p(f(\lao k))$.  

We will use a few special examples of the evaluations of tropical functions on $A$--integers. 
For instance, if $X\subset A$, the tropical monomial $\chi^{}_X=\sum_{a\in A} a$ evaluates
to $\min_{a\in X} \lao k_a$. The tropical rational function
\[
\chi_X^*=\frac{\chi^{}_X}{\sum_{a\in X}\chi^{}_{X\setminus \{a\}}}
\]
evaluates to $\max_{a\in X} \lao k_a$. In this formula, the sum in the denominator is
to be interpreted as the tropical sum.


We also remark that we can form the tropical elementary symmetric functions
in $A$ using the usual formulas:
\[
\sigma_i(A)=\sum_{X\subset A\mid \text{card}(X)=i}\left( \prod_{a\in X} a\right)
\]
In this formula, both the product and the sum are to be taken in the tropical sense.
If we evaluate the tropical polynomial $\sigma_i$ on an $A$-integer $\lao k$, we obtain the sum of the $i$ smallest numbers
from $\{\lao k_a\}_{a\in A}$.

\begin{thm}
  \label{tropical.interpretation}
 For any graph $\Gamma$ there is a tropical rational function $Z_\Gamma$
  in indeterminates indexed by the vertices of $\Gamma$ such that the
  for any vertex weighing $\lao k$, for any odd prime $p$ the order of the $p$-torsion of
  $H^1(\Gamma,\lvk)$ equals the evaluation $Z_\Gamma(p^\lvk)$ in the tropical ring $\NN\cup \{0\}$.
  \begin{proof}
    Using the K\"u{}nneth theorem, we easily reduce the theorem to the case that $\Gamma$ is connected.
    
    According to corollary~\ref{cor.main.theorem}, the order of the $p$-torsion equals $p^N$ where $N$ 
is the cardinality of the set of elements if $\HB_*(\Gamma,\lvk)$ that are not in $\cap_n\mathrm{Im}(s^n)$.
In order to prove the theorem, we need to show find $Z_\Gamma$ so that this cardinality is the evaluation
$Z_\Gamma(\lao k)$ in the tropical ring $\ZZ\cup \{\infty\}$. 

   The next remark is that the vertex weights determine edge weights, and that the edge weights are
tropical degree 2 monomials evaluated at the vertex weights: $\lao k_{e(v,w)}=\lao k_v \odot \lao k_w$. So a tropical
rational function in the edge weight can by composition with these monomials be written as a tropical
rational function in the vertex weights, and it is sufficient to give $Z_\Gamma$ as a tropical
rational function in edge weights and vertex weights.

Assume that the subgraph $\Delta\subset \Gamma$ is bipartite and connected, but not equal to $\Gamma$.  
It follows from 
lemma~\ref{H.characterization} that $(\Delta,r)\in \HB_r$ if 
and only if the following conditions are satisfied:
\begin{align*}
  \max_{e\in E(\Delta)} \lao k_e &< r \leq \min_{b\in B(\Delta)}\lao k_b,\\
  \min_{v\in V(\Delta)} \lao k_v &< r.
\end{align*}
That is, if
\[
\max(\max_{e\in E(\Delta)}\lao k_e,\min_{v\in V(\Delta)}\lao k_v)<r\leq \min_{b\in B(\Delta)}\lao k_b.
\]
Since we can express a maximum as a tropical rational function, this can be written as
$
f_1(\lao k)<r\leq f_2(\lao k)
$,
where $f_1$ and $f_2$ are tropical rational functions in $V(\Gamma)$.
Since $\Delta\not=\Gamma$ and $\Gamma$ is connected, the set $B(\Delta)$ is non-empty, and 
$\min_{b\in B(\Delta)}\lao k_b\not =\infty$.

This means that $f_2(\lao k)\not=\infty$, and the number of such integers $r$ equals
\[
  g_\Delta(\lao k)=
    \begin{cases}
    f_2(\lao k)-f_1(\lao k)&\text{ if $f_2(\lao k)-f_1(\lao k)>0$}\\
    0&\text{ else}
  \end{cases}
  \bigg \}
  =f_2(\lao k)\oslash(f_1(\lao k)\oplus f_2\lao k)).
\]
Form the tropical product $Z_\Gamma=\prod_{\Delta} g_\Delta$, where the sum is taken over all
bipartite $\Delta$ strictly contained in $\Gamma$. The evaluation
$Z_\Gamma(\lao k)$ 
of this tropical function equals the cardinality of
\[
  \bigcup_{\Delta\not=\Gamma}\{(\Delta,r)\in HB_r(\Gamma,\vk)\}=
  \HB_*\setminus \{(\Gamma,r)\in \HB_r(\Gamma,\vk)\}=
  \HB_*\setminus \cap_r\im(s^n).
\]  
This completes the proof of the theorem.
  \end{proof}
\end{thm}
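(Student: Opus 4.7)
The plan is to translate the cardinality formula of Corollary~\ref{cor.main.theorem} directly into a tropical expression. First I would reduce to the case of a connected graph, using Lemma~\ref{first.properties}: under a disjoint union of graphs the $p$-torsion factors as a product, so a tropical product of the individual $Z_{\Gamma_i}$ will give $Z_\Gamma$. For connected $\Gamma$, Corollary~\ref{cor.main.theorem} says the $p$-torsion has order $p^N$ where $N$ is the cardinality of $\HB_*(\Gamma,\vk)\setminus\bigcap_n\mathrm{Im}(s^n)$, so it suffices to realize $N$ as the evaluation at $\lao k$ of a tropical rational function in the semi-ring $\ZZ\cup\{\infty\}$; the homomorphism $\exp_p$ then transports this to the desired evaluation $p^N$ in $\NN\cup\{0\}$.

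Next I would enumerate $\HB_*$. Since $p$ is odd, Lemma~\ref{lemma.pn.orientation} identifies $\ZZ/p^r$-orientability with bipartiteness, so elements of $\HB_*$ are exactly pairs $(\Delta,r)$ with $\Delta$ a connected bipartite subgraph of $\Gamma$ satisfying H1--H5 of Lemma~\ref{H.characterization}. For fixed $\Delta$, conditions H2 and H3 read $\max_{e\in E(\Delta)}\lao k_e < r \leq \min_{b\in B(\Delta)}\lao k_b$, while H5 contributes $\min_{v\in V(\Delta)}\lao k_v < r$ (the latter is implied by H2 as soon as $\Delta$ has an edge, but is needed for one-vertex $\Delta$). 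When $\Delta\subsetneq\Gamma$ the edge boundary $B(\Delta)$ is non-empty and $\min_{b}\lao k_b$ is finite, so the number of admissible integers $r$ equals the positive part of $\min_{b}\lao k_b-\max(\max_{e}\lao k_e,\min_{v}\lao k_v)$. When $\Gamma$ itself is bipartite, the family of pairs $(\Gamma,r)$ for $r$ large forms an infinite tail which is precisely $\bigcap_n\mathrm{Im}(s^n)$ and is excluded from $N$; so in computing $N$ it is enough to sum over bipartite connected proper subgraphs $\Delta\subsetneq\Gamma$.

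Now I would package each count as a tropical rational function in the vertex weights. The tropical monomial $\chi_X=\bigoplus_{x\in X}x$ evaluates to $\min_{x\in X}\lao k_x$, and the tropical rational function $\chi_X^{*}=\chi_X\oslash\bigoplus_{x\in X}\chi_{X\setminus\{x\}}$ evaluates to $\max_{x\in X}\lao k_x$. Passing from vertex to edge weights through the degree-two tropical monomial $\lao k_{e(v,w)}=v\odot w$, I obtain tropical rational functions $f_1(\Delta)$ and $f_2(\Delta)$ whose evaluations at $\lao k$ are $\max(\max_{e}\lao k_e,\min_{v}\lao k_v)$ and $\min_{b}\lao k_b$ respectively. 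Since $f_2-\min(f_1,f_2)=\max(f_2-f_1,0)$, the function $g_\Delta:=f_2\oslash(f_1\oplus f_2)$ evaluates to the required count for $\Delta$. I then set
\[
Z_\Gamma \;:=\; \prod_{\Delta\subsetneq\Gamma}g_\Delta,
\]
a tropical product over the finite collection of bipartite connected proper subgraphs.

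Evaluating $Z_\Gamma$ at $\lao k$ in $\ZZ\cup\{\infty\}$ gives $\sum_\Delta g_\Delta(\lao k)=N$, so evaluating at $p^{\lao k}$ in $\NN\cup\{0\}$ gives $p^N$, which is the $p$-torsion order by Corollary~\ref{cor.main.theorem}. The main obstacle I anticipate is purely bookkeeping: I must ensure that the infinite tail $(\Gamma,r)$ in the bipartite case is cleanly excluded (otherwise $B(\Gamma)=\emptyset$ would force $f_2=\infty$ and make the corresponding factor undefined), and I must verify carefully that the combined inequality $\max(\max_e\lao k_e,\min_v\lao k_v)<r\leq\min_b\lao k_b$ captures all of H2, H3 and H5 simultaneously, including the degenerate one-vertex subgraph case.
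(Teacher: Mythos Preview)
Your proposal is correct and follows essentially the same route as the paper's proof: reduce to connected $\Gamma$, invoke Corollary~\ref{cor.main.theorem} to identify $N$ with the cardinality of $\HB_*(\Gamma,\vk)^0$, use Lemma~\ref{H.characterization} (together with Lemma~\ref{lemma.pn.orientation} for odd $p$) to count, for each connected bipartite proper subgraph $\Delta\subsetneq\Gamma$, the admissible $r$ as $\max(f_2-f_1,0)=f_2\oslash(f_1\oplus f_2)$, and then take the tropical product over all such $\Delta$. Your treatment is in fact slightly more explicit than the paper's in two places---you cite Lemma~\ref{first.properties} rather than a generic K\"unneth argument for the reduction to connected graphs, and you spell out why the odd-prime hypothesis enters (via bipartiteness $\Leftrightarrow$ orientability)---but the logical skeleton and the resulting formula $Z_\Gamma=\bigodot_{\Delta\subsetneq\Gamma}g_\Delta$ are identical to the paper's.
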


Actually, the proof of the theorem gives a formula for $Z_\Gamma$.
 For a bipartite. connected
subgraph $\Delta\in K_n$, we write $g_\Delta$ for the tropical rational function in the vertices of
$\Delta$ given by the formula
\[
g_\Delta(\lao k)= \max(\min_{b\in B(\Delta)}\lao k_b - \max_{e\in E(\Delta)}\lao k_e,0)
\]
According to the proof of the preceding theorem,
\begin{equation}
  \label{eq:Z}
Z_\Gamma=\bigodot_{\Delta\not=\Gamma}g_\Delta.  
\end{equation}
The product is to be interpreted in the tropical sense. In the evaluation, this product becomes an (ordinary) summation
\[
Z_\Gamma(\lao k)=\sum_{\Delta\not=\Gamma}g_\Delta(\lao k).  
\]

\subsection{The complete graphs}

A particularly simple example is the case when $\Gamma=K_n$ is a complete graph on $n$ vertices.
We will compute the corresponding tropical function $Z_{K_n}$.
We will not use formula~\ref{eq:Z}. Instead we will compute the cardinality of
$\HB_r(\Gamma,\vk)$ directly. 
To do this, we first determine which bipartite, connected
subgraphs $\Delta\subset \Gamma$  can occur as a components of
$\red r (\Gamma,\lao k)$ for a given $r$.

\begin{lemma}
  \label{complete.bipartites}
  If a bipartite connected subgraph $\Delta\subset K_n$ is a component of $\red r (K_n,\lao k)$,
  then either $\Delta=\Delta_v$ is a one vertex graph, or $\Delta$ contains a vertex $v$ such that
  $E(\Delta)=\big\{e(v,w)\mid w\in V(\Delta)\setminus \{v\}\big\}$. Moreover,
  there is an $r$ such that $V(\Delta)=\{w\in V(K_n)\mid \lao k_v<r\}$.
  \begin{proof}
    Assume that $\Delta$ is a connected component of $\red r (K_n,\lao k)$.
    Let us order the vertices of $K_n$ according to the weights, so that $V(K_n)=\{v_i\mid 1\leq i\leq n\}$ and
    $k_{v_1}\leq k_{v_2}\leq \dots\leq k_{v_n}$. Put $v=v_1$. If $\Delta=\Delta_{v_1}$, we are in the first case of the lemma, so we can
    without restriction assume that
    there is some $v_i\in V(\Delta)$ such that $e(v,v_i)\in E(\Delta)$.  Let $i_{max}>1$ be the maximal $i$
    such that $e(v,v_i)\in E(\Delta)$. If $i\leq v_{max}$, $\lao k_{v_1}+ \lao k_{v_i}\leq \lao k_{v_1}+ \lao k_{max}<r(\Delta)$, so that $e(v,v_i)\in E(\Delta)$.
    If $i>i_{max}$, then $e(v,v_i)\not\in E(\Delta)$, and $\lao k_{v_1}+ \lao k_{v_i}\geq r(\Delta)$. It follows that for any $j$,
    $\lao k_{v_j}+\lao k_{v_i}\geq \lao k_{v_1}+\lao k_{v_i}\geq r(\Delta)$, so that $v_j$ is a vertex incident to no edges in $\red r{K_n}$. It follows that none of these vertices are in $V(\Delta)$.

    Next, we check that there are no edges $e(v_i,v_j)\in E(\Delta)$ for $i,j\not=1$. However, since
    $e(v_1,v_i)$ and $e(v_1,v_j)$ are edges of $\Delta$, such an edge  would contradict the bipartiteness of $\Delta$.
    This means that every edge in $E(\Delta)$ is incident to $v$, which proves the statement of the lemma about the structure of $\Delta$. The statement about $V(\Delta)$ is also clear after choosing $r=r(\Delta)-\lao kv$.
  \end{proof}
\end{lemma}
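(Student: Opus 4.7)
The plan is to pick a vertex $v \in V(\Delta)$ of minimum weight as the candidate center of the star, and then derive both the edge structure and the vertex set from the component-and-bipartite conditions. Setting $r = r(\Delta)$ so that $\Delta$ is a component of $\red r (K_n, \lao k)$, I note that because every pair of vertices of $K_n$ is joined by an edge, for any $u, w \in V(\Delta)$ the edge $e(u,w)$ belongs to $E(\Delta)$ if and only if $\lao k_u + \lao k_w < r$. If $V(\Delta)$ is a singleton we land immediately in the first case of the lemma, so assume $|V(\Delta)| \geq 2$.

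The central step is a connectedness-plus-induction argument showing that $v$ is adjacent in $\Delta$ to every other vertex. Given $w \in V(\Delta) \setminus \{v\}$, pick a path $v = u_0, u_1, \dots, u_\ell = w$ inside $\Delta$. Each edge on this path satisfies $\lao k_{u_i} + \lao k_{u_{i+1}} < r$, hence $\lao k_{u_{i+1}} < r - \lao k_{u_i}$; combined with the minimality $\lao k_v \leq \lao k_{u_i}$ this yields $\lao k_v + \lao k_{u_{i+1}} < r$. By induction along the path, $\lao k_v + \lao k_w < r$, so $e(v,w)$ lies in the reduction and therefore in $E(\Delta)$.

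Now bipartiteness of $\Delta$ rules out any further edge: an edge $e(u,w) \in E(\Delta)$ with $u, w \neq v$ would close a triangle together with $e(v,u)$ and $e(v,w)$. Hence $E(\Delta) = \{e(v,w) \mid w \in V(\Delta) \setminus \{v\}\}$, the asserted star structure. For the moreover statement, the argument above shows that $w \in V(\Delta) \setminus \{v\}$ if and only if $\lao k_v + \lao k_w < r$: one direction is just $e(v,w) \in E(\Delta)$, and the other follows because any $w$ satisfying $\lao k_v + \lao k_w < r$ is joined to $v$ by an edge of $\red r(K_n, \lao k)$ and thus sits in the component $V(\Delta)$.

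The argument is not difficult; the only slightly delicate point is the path induction, which relies crucially on choosing $v$ to be a vertex of strictly minimum weight so that the bound $\lao k_v + \lao k_{u_{i+1}} < r$ propagates step-by-step along the path. Everything else is a direct consequence of the fact that, in $K_n$, a component of a reduction is determined purely by inequalities on vertex weight sums.
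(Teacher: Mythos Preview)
Your proof is correct and follows essentially the same approach as the paper's: pick a minimum-weight vertex $v$ of $\Delta$, show it is adjacent to every other vertex of $\Delta$, and then use bipartiteness to rule out any edge not through $v$. The only difference is cosmetic --- your path induction is in fact more than needed, since for any $w\in V(\Delta)\setminus\{v\}$ with a neighbour $u$ in $\Delta$ one already has $\lao k_v+\lao k_w\le \lao k_u+\lao k_w<r$; the paper reaches the same conclusion by ordering all vertices of $K_n$ by weight and arguing directly about which $v_i$ become isolated in the reduction.
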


We  can now describe the fundamental forest of $(K_n,\lao k)$. Let us assume that we have ordered the vertices as in the proof of lemma~\ref{complete.bipartites}. First, there are the
one vertex subgraphs $\Delta_{v_i}$ for $v_i\in V(K_n)$. If $i\geq 2$, the  
$(\Delta_{v_i},r)\in \HB_r(K_n)$ for $\lao k_{v_i} < r \leq \lao k_{v_1}+\lao k_{v_i}$. There are
$(n-1)\lao k_{v_1}$ of these elements of $HB_*(\Gamma,\vk)$.
The one vertex graph $(\Delta_{v_1},r)$ is in $\HB_r(K_n)$ for
$\lao k_{v_1}<r\leq \lao k_{v_1}+\lao k_{v_2}$. There are $\lao k_{v_2}$ of those possible values of $r$.

Then we have the graphs which have at least two vertices.
If we order the edges incident to  $v_1$ according to increasing  weights we get
\[
  e_{v_1,v_2} \leq e_{v_1,v_3} \dots \leq e_{v_1,v_n}
\]
The edge with smallest weight which is not in this list is $e_{v_2,v_3}$. There are two possibilities.
Either, for some
$m$ we will have that $\lao k_{v_1}+\lao k_{v_m}<\lao k_{v_2}+\lao k_{v_3}\leq \lao k_{v_1}+\lao k_{v_{m+1}}$, or
$\lao k_{v_1}+\lao k_{v_i}<\lao k_{v_2}+\lao k_{v_3}$ for all $i$. In the last case, we set $m=n$, and see that
the list of edges ordered after weight up to and including $e_{v_2,v_3}$ are
\[
e_{v_1,v_2} \leq e_{v_1,v_3} \dots \leq e_{v_1,v_m}\leq e_{v_2,v_3}.
\]

For
$i\leq m$, there graph $\Delta_i$ with vertices $\{v_j\mid j\leq i\}$.
If
$i<m$,
there are
$(\lao k_{v_{i+1}}+\lao k_{v_1})-(\lao k_{v_{i}}+\lao k_{v_1})=\lao k_{v_{i+1}}-\lao k_{v_{i}}$  elements
of the form $(\Delta_i,r)$ .
Finally, there are $\lao k_{v_2}+\lao k_{v_3}-(\lao k_{v_1}+\lao k_{v_{m}})$ elements of the
  form $(\Delta_{m},r)$.
  
To get the number of elements of $\HB_*$, we sum up:
\begin{align*}
  (n-1)\lao k_1+\lao k_2&+\left(\sum_{2\leq i\leq m-1}\lao k_{v_{i+1}}-\lao k_{v_{i}}\right)
+\left(\lao k_{v_2}+\lao k_{v_3}-(\lao k_{v_1}+\lao k_{v_{m}})\right)
  \\
  &=(n-2)\lao k_1+\lao k_{v_2}+\lao k_{v_3}
\end{align*}

We can formulate the computation of $Z_{K_n}$ in the tropical language as follows:
\begin{thm}
  For $n\geq 3$, 
\[
Z_{K_n}(\lao k)=(\sigma_1(\lao k))^{n-3}\sigma_3(\lao k)
\]
where $\sigma_i$ is the $i^{\text{th}}$ tropical elementary symmetric functions in the vertices of $K_n$. The product is to be evaluated as a tropical product.
\begin{proof}
  $\sigma_3$ is the tropical product of the three smallest numbers in $\lao k_v$, that is
  $\lao k_1+\lao k_2 + \lao k_3$. Similarly, $\sigma_1$ is minimal number in $\lao k_v$, that is $\lao k_1$. The theorem follows. 
\end{proof}
\end{thm}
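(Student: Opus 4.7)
The plan is to read the theorem off directly from the combinatorial count of $\HB_*(K_n,\lao k)$ performed just above in the text, and then recognize the resulting expression as the claimed tropical function. The substantive work has already been done; what remains is translation.

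First, I would observe that since $K_n$ contains a triangle for $n \geq 3$, it is not bipartite, so its fundamental forest is finite (as noted in the discussion in section~\ref{structure.forest}), and in particular $\cap_n \mathrm{Im}(s^n) = \emptyset$ in $\HB_*(K_n,\lao k)$. The formula $Z_\Gamma = \bigodot_{\Delta\neq\Gamma} g_\Delta$ from equation~\eqref{eq:Z}, combined with Theorem~\ref{tropical.interpretation}, therefore identifies $Z_{K_n}(\lao k)$ with the full cardinality $|\HB_*(K_n,\lao k)|$. The combinatorial analysis preceding the theorem --- using Lemma~\ref{complete.bipartites} to classify the connected bipartite components of $\red{p^r}(K_n,\lao k)$ as the one-vertex graphs $\Delta_{v_i}$ together with the ``stars'' $\Delta_i$ centered at the minimal-weight vertex $v_1$, and then summing the contributions over all admissible $r$ --- has already established that
\[
|\HB_*(K_n,\lao k)| \;=\; (n-2)\lao k_{v_1} + \lao k_{v_2} + \lao k_{v_3},
\]
where the vertices are ordered so that $\lao k_{v_1} \leq \lao k_{v_2} \leq \dots \leq \lao k_{v_n}$.

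Next, I would translate this expression into the tropical language. By definition, the tropical elementary symmetric function $\sigma_i(\lao k)$ evaluates in ordinary arithmetic to the sum of the $i$ smallest values in $\{\lao k_v\}_{v \in V(K_n)}$, so $\sigma_1(\lao k) = \lao k_{v_1}$ and $\sigma_3(\lao k) = \lao k_{v_1} + \lao k_{v_2} + \lao k_{v_3}$. The tropical product $(\sigma_1)^{n-3} \odot \sigma_3$ thus evaluates in ordinary arithmetic to
\[
(n-3)\lao k_{v_1} + \bigl(\lao k_{v_1} + \lao k_{v_2} + \lao k_{v_3}\bigr) \;=\; (n-2)\lao k_{v_1} + \lao k_{v_2} + \lao k_{v_3},
\]
which agrees with the cardinality computed above, proving the claim.

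There is essentially no obstacle here: the real work was the pre-theorem count, and in particular the verification that the case split $m < n$ versus $m = n$ telescopes to the same final answer regardless of which case applies. Once that count is in hand, the present theorem amounts to inspection of the defining formula for $\sigma_i$.
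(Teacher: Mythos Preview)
Your proposal is correct and follows essentially the same approach as the paper: both recognize that the pre-theorem computation $|\HB_*(K_n,\lao k)| = (n-2)\lao k_{v_1} + \lao k_{v_2} + \lao k_{v_3}$ matches the tropical evaluation $(\sigma_1)^{n-3}\odot\sigma_3$. You are simply more explicit than the paper in justifying why $Z_{K_n}(\lao k)$ equals the full cardinality of $\HB_*$ (namely, because $K_n$ is not bipartite for $n\geq 3$, so $\cap_n\mathrm{Im}(s^n)=\emptyset$), which is a welcome clarification.
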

Presumably, there is also a more direct linear algebra proof of this result.

One can use the description of the fundamental forest above to determine the complete structure of
the torsion subgroup of $H^1(K_n,\vk)$. For instance, if for some $v$ the weight $k_v$ is prime to $p$, the $p$--torsion subgroup is cyclic.
\bibliography{Graphs}

\end{document}